
\documentclass{article}%
\usepackage{bbm}
\usepackage{epsfig}
\usepackage{graphics,graphicx,amssymb,amsmath,verbatim}
\usepackage{amssymb}
\usepackage{mathrsfs}
\usepackage{amsfonts}
\usepackage{graphicx}
\usepackage{amsmath}%
\setcounter{MaxMatrixCols}{30}
%TCIDATA{OutputFilter=latex2.dll}
%TCIDATA{Version=5.50.0.2890}
%TCIDATA{LastRevised=Saturday, March 05, 2011 18:06:07}
%TCIDATA{<META NAME="GraphicsSave" CONTENT="32">}
%TCIDATA{<META NAME="SaveForMode" CONTENT="1">}
%TCIDATA{BibliographyScheme=Manual}
%TCIDATA{Language=American English}
%BeginMSIPreambleData
\providecommand{\U}[1]{\protect\rule{.1in}{.1in}}
%EndMSIPreambleData
\setlength{\marginparwidth}{0.5in}
\setlength{\marginparsep}{0.5in}
\setlength{\oddsidemargin}{-0in}
\setlength{\evensidemargin}{-0in}
\setlength{\textwidth}{6.5in}
\setlength{\topmargin}{-0.75in}
\setlength{\textheight}{9.5in}
\setlength{\parindent}{0in}
\setlength{\parskip}{.1in}

\newtheorem{theorem}{Theorem}

\newtheorem{conjecture}{Conjecture}
\newtheorem{corollary}{Corollary}

\newtheorem{definition}{Definition}
\newtheorem{example}{Example}

\newtheorem{lemma}{Lemma}

\newtheorem{proposition}{Proposition}
\newtheorem{remark}{Remark}

\newenvironment{proof}[1][Proof]{\noindent\textbf{#1.} }{\ \rule{0.5em}{0.5em}}
\begin{document}

\title{Toward Solution of Matrix Equation $X=Af(X)B+C$}
\author{Bin Zhou\thanks{Bin Zhou and Guang-Ren Duan are with the Center for Control
Theory and Guidance Technology, Harbin Institute of Technology, Harbin,
150001, P. R. China.}\thinspace\ \thanks{Corresponding author. Email:
\texttt{binzhoulee@163.com; binzhou@hit.edu.cn.}}\thinspace\ \quad James
Lam\thanks{James Lam is with Department of Mechanical Engineering, University
of Hong Kong, Hong Kong (E-mail: \texttt{james.lam@hku.hk}).}\thinspace\ \quad
Guang-Ren Duan$^{\ast}$}
\date{}
\maketitle

\begin{abstract}
This paper studies the solvability, existence of unique solution, closed-form
solution and numerical solution of matrix equation $X=Af\left(  X\right)  B+C$
with $f\left(  X\right)  =X^{\mathrm{T}},$ $f\left(  X\right)  =\overline{X}$
and $f\left(  X\right)  =X^{\mathrm{H}},$ where $X$ is the unknown. It is
proven that the solvability of these equations is equivalent to the
solvability of some auxiliary standard Stein equations in the form of
$W=\mathcal{A}W\mathcal{B}+\mathcal{C}$ where the dimensions of the
coefficient matrices $\mathcal{A},\mathcal{B}$ and $\mathcal{C}$ are the same
as those of the original equation. Closed-form solutions of equation
$X=Af\left(  X\right)  B+C$ can then be obtained by utilizing standard results
on the standard Stein equation. On the other hand, some generalized Stein
iterations and accelerated Stein iterations are proposed to obtain numerical
solutions of equation equation $X=Af\left(  X\right)  B+C$. Necessary and
sufficient conditions are established to guarantee the convergence of the iterations.

\vspace{0.5cm}

\noindent\textbf{Keywords:} Closed-form solutions; Iteration; Matrix
equations; Numerical solutions; Stein equations; Solvability; Unique solution.

\end{abstract}

\section{Introduction}

Matrix equations play a very important role in linear system theory. For
example, the continuous-time Lyapunov matrix equation $AX+XA^{\mathrm{T}}=-Q$
and the discrete-time Lyapunov matrix equation $AXA^{\mathrm{T}}-X=-Q$ (which
is also known as the Stein equation) have very important applications in the
stability analysis of continuous-time and discrete-time linear systems,
respectively (\cite{zdg96book}). A more general class of equations in the form
of $AX+XB=C,$ which is called the Sylvester matrix equation, can be used to
solve many control problems such as pole assignment, eigenstructure
assignment, robust pole assignment, and observer design (see, for example,
\cite{db81laa} and \cite{zd05laa}). Other matrix equations such as the coupled
Sylvester matrix equations and the Riccati equations have also found numerous
applications in control theory. For more related introduction, see
\cite{flw09laa}, \cite{lrt92laa}, \cite{lwzd10amc}, \cite{zd05laa} and the
references therein.

Because of the wide applications of linear equations in control theory, many
approaches have been established in the literature to provide both closed-form
and numerical solutions. Take the Sylvester matrix equation $AX+XB=C$ for
example. The controllability and observability matrices are used in
\cite{db81laa} to construct the closed-form solutions, and such a solution is
also obtained by the inversion of a square matrix in \cite{jameson68siam}.
Regarding the numerical methods for solving the Sylvester matrix equation, one
of the most efficient approaches is the Hessenberg-Schur form based algorithm
proposed in \cite{gnl79tac}. Besides, the iterative algorithm by using the
hierarchical identification principle proposed in \cite{dc05tac} and
\cite{dld08amc} is also shown to be effective in solving this class of
equations. A more general form of this class of equations is recently
considered in \cite{bourgeois11laa}. For more references on this topic, see
\cite{dh10laa}, \cite{dl08laa}, \cite{huang96laa}, \cite{jbilou10laa},
\cite{lr09laa}, \cite{tt10laa}, \cite{miller88}, \cite{wimmer89laa},
\cite{zld09aml} and the references given there.

As a generalization of the continuous-time Lyapunov matrix equation, the
following linear matrix equation%
\begin{equation}
AX+X^{\mathrm{T}}B=C, \label{teq70}%
\end{equation}
has received much attention in the literature over the past several decades.
For example, special cases of (\ref{teq70}) in the form of $AX\pm
X^{\mathrm{T}}A^{\mathrm{T}}=B$ were thoroughly studied in \cite{td11laa},
\cite{d07jcam} and \cite{hodges57ampa} where necessary and sufficient
conditions for the existence of solutions were obtained. Very recently, the
solvability and closed-form solutions of equation (\ref{teq70}) is studied in
\cite{pzw07jfi} by using the Moore-Penrose generalized inverse. As a more
general case, the linear equation
\begin{equation}
AX^{\mathrm{T}}B+EXF=-C, \label{teq75}%
\end{equation}
is investigated in \cite{lwzd10amc} and \cite{wang07AMC} where iterative
approaches are developed to obtain the numerical solutions.

In this paper, we study matrix equation%
\begin{equation}
X=Af\left(  X\right)  B+C, \label{teqG}%
\end{equation}
with $f\left(  X\right)  =X^{\mathrm{T}},f\left(  X\right)  =\overline{X}$ and
$f\left(  X\right)  =X^{\mathrm{H}},$ and $A,B,C$ being known matrices. If
$f\left(  X\right)  =X^{\mathrm{T}},$ namely, $X=AX^{\mathrm{T}}B+C,$ then
equation (\ref{teqG}) can be regarded as a special case of (\ref{teq75}). If
$f\left(  X\right)  =\overline{X}$, equation (\ref{teqG}) becomes
$X=A\overline{X}B+C$ which was first studied in \cite{jw03laa}. Several
problems will be studied for equation (\ref{teqG}). These problems include
solvability, existence of unique solution, closed-form solution and numerical
solution. The key idea is to transform this class of equations into the
standard Stein equation (see, for example, \cite{ag10laa})%
\begin{equation}
W=\mathcal{A}W\mathcal{B}+\mathcal{C} \label{teqGt}%
\end{equation}
where $\mathcal{A},\mathcal{B}$ and $\mathcal{C}$ are functions of $A,B,C,$
and $W$ is unknown$.$ It is shown that the solvability and the existence of
unique solution of the original equation (\ref{teqG}) is equivalent to the
solvability and the existence of unique solution of equation (\ref{teqGt}).
Moreover, we have proven that if the general solution (denoted by
$\boldsymbol{W}$) of equation (\ref{teqGt}) has been obtained, then the
general solution (denoted by $\boldsymbol{X}$) of the original equation
(\ref{teqG}) can be obtained via the formulation%
\[
\boldsymbol{X}=\frac{1}{2}\left(  \boldsymbol{W}+Af\left(  \boldsymbol{W}%
\right)  B+C\right)  .
\]
Regarding numerical solutions, we extend our early results on Smith iteration
for standard Stein equation given in \cite{zld09aml} to equation (\ref{teqG})
and necessary and sufficient conditions are obtained to guarantee the
convergence of the iterates. Different from the real representation based
approach for solving equation $X=A\overline{X}B+C$ developed in \cite{jw03laa}
where the matrix equation is transformed into a standard Stein equation whose
coefficient matrices have dimensions twice of the original coefficient
matrices, here the dimensions of the coefficient matrices of the auxiliary
matrix equation (\ref{teqGt}) is the same as the dimensions of coefficient
matrices of the original equation (\ref{teqG}).

The rest of this paper is organized as follows: Some preliminary results in
linear algebra that will be used in this paper is recalled in Section
\ref{tsec1}. A basic result regarding the solution of linear equation is
developed in Section \ref{tsec1a}, which plays an important role in the
development of this paper. Then, for $f\left(  X\right)  =X^{\mathrm{T}%
},f\left(  X\right)  =\overline{X}$ and $f\left(  X\right)  =X^{\mathrm{H}},$
the matrix equation in (\ref{teqG}) is, respectively, considered\ in Section
\ref{tsec2}, Section \ref{tsec3} and Section \ref{tsec4}. A more general case
is treated in Section \ref{tsec7}. The paper is finally concluded in Section
\ref{tsec5}.

\section{\label{tsec1}Notations and Preliminaries}

\subsection{Notations and Some Standard Results}

In this paper, for a matrix $A$, we use $A^{\mathrm{T}},A^{\mathrm{H}%
},\overline{A},\lambda\left\{  A\right\}  ,\rho\left(  A\right)
,\mathrm{rank}\left(  A\right)  $ and $\det\left(  A\right)  $ to denote
respectively the transpose, the conjugated transpose, the conjugate, the
spectrum, the spectral radius, the rank and the determinant of $A.$ The symbol
$\otimes$ denotes the Kronecker product. For an $m\times n$ matrix $R=\left[
r_{ij}\right]  $, the so-called column stretching function $\mathrm{vec}%
\left(  \cdot\right)  $ is defined as
\[
\mathrm{vec}\left(  R\right)  =\left[
\begin{array}
[c]{ccccccc}%
r_{11} & \cdots & r_{m1} & \cdots & r_{1n} & \cdots & r_{mn}%
\end{array}
\right]  ^{\mathrm{T}}.
\]
For three matrices $A,X$ and $B$ with appropriate dimensions, we have the
following well-known results related to the column stretching function:%
\begin{equation}
\mathrm{vec}\left(  AXB\right)  =\left(  B^{\mathrm{T}}\otimes A\right)
\mathrm{vec}\left(  X\right)  . \label{axb1}%
\end{equation}

We then can introduce the following standard result.

\begin{lemma}
\label{tlm0}(\cite{cc02book} and \cite{lwzd10amc}) Let $X\in\mathbf{C}%
^{m\times n}$ be any matrix. Then
\[
\mathrm{vec}\left(  X^{\mathrm{T}}\right)  =P_{\left(  m,n\right)
}\mathrm{vec}\left(  X\right)  ,
\]
where $P_{\left(  m,n\right)  }$ is uniquely determined by the integers $m$
and $n.$ Moreover, the matrix $P_{\left(  m,n\right)  }$ has the following properties.

\begin{enumerate}
\item For two arbitrary integers $m$ and $n$, $P_{\left(  m,n\right)  }$ has
the following explicit form%
\[
P_{\left(  m,n\right)  }=\left[
\begin{array}
[c]{cccc}%
E_{11}^{\mathrm{T}} & E_{12}^{\mathrm{T}} & \cdots & E_{1n}^{\mathrm{T}}\\
E_{21}^{\mathrm{T}} & E_{22}^{\mathrm{T}} & \cdots & E_{2n}^{\mathrm{T}}\\
\vdots & \vdots & \ddots & \vdots\\
E_{m1}^{\mathrm{T}} & E_{m2}^{\mathrm{T}} & \cdots & E_{mn}^{\mathrm{T}}%
\end{array}
\right]  \in\mathbf{R}^{mn\times mn},
\]
where $E_{ij},i=1,2,\ldots,m,j=1,2,\ldots,n$ is an $m\times n$ matrix with the
element at position $\left(  i,j\right)  $ being 1 and the others being 0.

\item For two arbitrary integers $m$ and $n$, $P\left(  m,n\right)  $ is a
unitary matrix, \textit{that is}
\[
P_{\left(  m,n\right)  }P_{\left(  m,n\right)  }^{\mathrm{T}}=P_{\left(
m,n\right)  }^{\mathrm{T}}P_{\left(  m,n\right)  }=I_{mn}.
\]

\item For two arbitrary integers $m$ and $n$, there holds $P_{\left(
m,n\right)  }=P_{\left(  n,m\right)  }^{\mathrm{T}}=P_{\left(  n,m\right)
}^{-1}.$

\item Let $m,n,p$ and $q$ be four integers and $A\in\mathbf{C}^{m\times
n},B\in\mathbf{C}^{p\times q}.$ Then%
\begin{equation}
P_{\left(  m,p\right)  }\left(  B\otimes A\right)  =\left(  A\otimes B\right)
P_{\left(  n,q\right)  }. \label{teq98}%
\end{equation}

\end{enumerate}
\end{lemma}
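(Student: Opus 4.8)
My plan is to base everything on the single defining relation $\mathrm{vec}(X^{\mathrm{T}})=P_{(m,n)}\mathrm{vec}(X)$ together with the fact that $\mathrm{vec}(\cdot)$ is a linear bijection from $\mathbf{C}^{m\times n}$ onto $\mathbf{C}^{mn}$: the rule $\mathrm{vec}(X)\mapsto\mathrm{vec}(X^{\mathrm{T}})$ then defines a unique linear map on $\mathbf{C}^{mn}$ whose matrix in the standard basis is $P_{(m,n)}$, which already settles existence and uniqueness. I would prove the four properties in the order (1), then (2) and (3) together, then (4). For (1), I would expand $X=\sum_{i,j}x_{ij}E_{ij}$, so that $\mathrm{vec}(X)=\sum_{i,j}x_{ij}\mathrm{vec}(E_{ij})$ and $\mathrm{vec}(X^{\mathrm{T}})=\sum_{i,j}x_{ij}\mathrm{vec}(E_{ij}^{\mathrm{T}})$. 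The vectors $\mathrm{vec}(E_{ij})$ are exactly the standard basis of $\mathbf{C}^{mn}$ — concretely $\mathrm{vec}(E_{ij})$ is the unit vector $e_{(j-1)m+i}$, while $\mathrm{vec}(E_{ij}^{\mathrm{T}})$ is the unit vector $e_{(i-1)n+j}$ — so the column of $P_{(m,n)}$ in position $(j-1)m+i$ must equal $\mathrm{vec}(E_{ij}^{\mathrm{T}})$. Translating this column-by-column description into block form, where the row index of $P_{(m,n)}$ runs through $\mathrm{vec}(X^{\mathrm{T}})$ in $m$ consecutive blocks of length $n$ labeled by the original row $i$ and the column index runs through $\mathrm{vec}(X)$ in $n$ consecutive blocks of length $m$ labeled by the original column $j$, one finds that the $(i,j)$-th $n\times m$ block of $P_{(m,n)}$ has $(k,l)$-entry $(E_{ij}^{\mathrm{T}})_{kl}$, i.e.\ equals $E_{ij}^{\mathrm{T}}$, which is precisely the displayed formula (the $mn\times mn$ size being automatic from $m$ block rows and $n$ block columns each of size $n\times m$).

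For (2) and (3): the identities $\mathrm{vec}(E_{ij})=e_{(j-1)m+i}$ and $\mathrm{vec}(E_{ij}^{\mathrm{T}})=e_{(i-1)n+j}$ show that $P_{(m,n)}$ permutes the standard basis of $\mathbf{C}^{mn}$, hence is a real permutation matrix and therefore orthogonal, i.e.\ $P_{(m,n)}P_{(m,n)}^{\mathrm{T}}=P_{(m,n)}^{\mathrm{T}}P_{(m,n)}=I_{mn}$, which is (2). Applying the defining relation twice, $\mathrm{vec}(X)=\mathrm{vec}((X^{\mathrm{T}})^{\mathrm{T}})=P_{(n,m)}\mathrm{vec}(X^{\mathrm{T}})=P_{(n,m)}P_{(m,n)}\mathrm{vec}(X)$ for every $X$, so $P_{(n,m)}P_{(m,n)}=I_{mn}$; combining this with (2) yields $P_{(m,n)}=P_{(n,m)}^{-1}=P_{(n,m)}^{\mathrm{T}}$, which is (3). (Alternatively, $P_{(m,n)}=P_{(n,m)}^{\mathrm{T}}$ can be read straight off the explicit block form produced in (1).)

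For (4), with $A\in\mathbf{C}^{m\times n}$ and $B\in\mathbf{C}^{p\times q}$, I would apply both sides to $\mathrm{vec}(Y)$ for an arbitrary $Y\in\mathbf{C}^{n\times q}$ and invoke (\ref{axb1}) together with the defining relation. On the left, $(B\otimes A)\mathrm{vec}(Y)=\mathrm{vec}(AYB^{\mathrm{T}})$ with $AYB^{\mathrm{T}}\in\mathbf{C}^{m\times p}$, hence $P_{(m,p)}(B\otimes A)\mathrm{vec}(Y)=\mathrm{vec}((AYB^{\mathrm{T}})^{\mathrm{T}})=\mathrm{vec}(BY^{\mathrm{T}}A^{\mathrm{T}})$. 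On the right, $P_{(n,q)}\mathrm{vec}(Y)=\mathrm{vec}(Y^{\mathrm{T}})$ and then $(A\otimes B)\mathrm{vec}(Y^{\mathrm{T}})=\mathrm{vec}(BY^{\mathrm{T}}A^{\mathrm{T}})$, again by (\ref{axb1}). The two expressions agree for all $Y$, and since $\mathrm{vec}(Y)$ exhausts $\mathbf{C}^{nq}$, the matrices $P_{(m,p)}(B\otimes A)$ and $(A\otimes B)P_{(n,q)}$ coincide. I do not expect any real obstacle here: the only part requiring genuine care is the double index bookkeeping in (1) needed to identify the explicit block formula, while (2)--(4) follow in a line or two from the defining relation and (\ref{axb1}).
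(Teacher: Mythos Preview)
Your argument is correct in all four parts. The key observations --- that $P_{(m,n)}$ sends the standard basis vector $e_{(j-1)m+i}=\mathrm{vec}(E_{ij})$ to $e_{(i-1)n+j}=\mathrm{vec}(E_{ij}^{\mathrm{T}})$, hence is a permutation matrix, and that (\ref{teq98}) follows by applying both sides to $\mathrm{vec}(Y)$ and invoking (\ref{axb1}) --- are the standard ones, and the index bookkeeping you outline for the block form in (1) checks out.

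As to comparison: the paper does not prove this lemma at all; it is stated as a known result with citations to \cite{cc02book} and \cite{lwzd10amc}. So there is no ``paper's own proof'' to compare against. Your write-up is a self-contained verification of a cited fact, and it is correct as given.
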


Let $A\in\mathbf{C}^{m\times n}$ be any given matrix, the generalized inverse
of $A$, denoted by $A^{-}$, is such that%
\[
AA^{-}A=A.
\]
Then we can recall the following standard result regarding solutions of linear equation.

\begin{lemma}
\label{tlm13}Let $A\in\mathbf{C}^{m\times n}$ and $b\in\mathbf{C}^{m}$ be
given. Then the linear equation%
\begin{equation}
Ax=b \label{teq0}%
\end{equation}
is solvable if and only if there exists a $A^{-}$ such that $AA^{-}b=b$.
Moreover, if it is solvable and denote $x^{\dag}$ as one of its solutions,
then the general solution of equation (\ref{teq0}) is given by%
\[
x=x^{\dag}+\left(  I_{n}-A^{-}A\right)  t,
\]
where $t\in\mathbf{C}^{m}$ is an arbitrary vector.
\end{lemma}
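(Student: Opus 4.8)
The plan is to treat the two assertions of the lemma separately, relying only on the defining identity $AA^-A=A$ together with the standard fact that every matrix admits at least one generalized inverse (for instance, the Moore--Penrose inverse satisfies $AA^-A=A$).

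For the solvability criterion, the ``if'' part is immediate: if some generalized inverse $A^-$ satisfies $AA^-b=b$, then $x:=A^-b$ obeys $Ax=AA^-b=b$, so (\ref{teq0}) is solvable. For the ``only if'' part, I would fix any solution $x^{\dag}$ of (\ref{teq0}) and any generalized inverse $A^-$; then
\[
AA^-b=AA^-\left(  Ax^{\dag}\right)  =\left(  AA^-A\right)  x^{\dag}=Ax^{\dag}=b,
\]
so the required $A^-$ exists. In fact this shows that, once (\ref{teq0}) is solvable, \emph{every} generalized inverse $A^-$ has the property $AA^-b=b$.

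For the general solution, assume (\ref{teq0}) is solvable and fix a particular solution $x^{\dag}$ together with a generalized inverse $A^-$. Every vector of the claimed form is a solution, since for an arbitrary $t$,
\[
A\left(  x^{\dag}+\left(  I_{n}-A^-A\right)  t\right)  =Ax^{\dag}+\left(  A-AA^-A\right)  t=b+0=b.
\]
Conversely, if $x$ is any solution, put $z:=x-x^{\dag}$, so that $Az=Ax-Ax^{\dag}=b-b=0$; then $\left(  I_{n}-A^-A\right)  z=z-A^-\left(  Az\right)  =z$, and hence $x=x^{\dag}+\left(  I_{n}-A^-A\right)  z$ is of the stated form. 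This proves that the solution set is exactly $\{x^{\dag}+\left(  I_{n}-A^-A\right)  t\}$ as $t$ ranges over all vectors of compatible size.

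The argument is elementary and I do not anticipate a genuine obstacle; the only points needing a little care are invoking the existence of \emph{some} generalized inverse in the ``only if'' direction, and the observation that $Az=0$ forces $\left(  I_{n}-A^-A\right)  z=z$, which is precisely what makes the parametrization $t\mapsto x^{\dag}+\left(  I_{n}-A^-A\right)  t$ surjective onto the solution set rather than merely a source of solutions.
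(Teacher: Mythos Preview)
Your argument is correct and is the standard elementary verification of this classical fact. Note, however, that the paper does not actually supply a proof of this lemma: it is presented there as a ``standard result'' that is merely recalled, so there is no paper proof to compare against. Your write-up would serve perfectly well as the missing justification. (Incidentally, the paper's statement has a typographical slip: the free parameter $t$ must lie in $\mathbf{C}^{n}$, not $\mathbf{C}^{m}$, for $\left(I_{n}-A^{-}A\right)t$ to make sense; your phrase ``vectors of compatible size'' quietly avoids this.)
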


The following result is the well-known Schur complement.

\begin{lemma}
\label{tlm10}(\cite{cc02book}) Let $X_{1},X_{2},X_{12}$ and $X_{21}$ be some
matrices with appropriate dimensions. Assume that $X_{1}$ and $X_{2}$ are
nonsingular. Then,%
\begin{align*}
\det\left[
\begin{array}
[c]{cc}%
X_{1} & X_{12}\\
X_{21} & X_{2}%
\end{array}
\right]   &  =\det\left(  X_{1}\right)  \det\left(  X_{2}-X_{21}X_{1}%
^{-1}X_{12}\right) \\
&  =\det\left(  X_{2}\right)  \det\left(  X_{1}-X_{12}X_{2}^{-1}X_{21}\right)
.
\end{align*}

\end{lemma}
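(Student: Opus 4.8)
The statement to be proved is the Schur complement determinant formula, and the plan is to establish each of the two displayed identities by writing the partitioned matrix as a product of a unit block triangular matrix and a block triangular matrix, then invoking the elementary fact that the determinant of a block triangular matrix equals the product of the determinants of its diagonal blocks.

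First I would treat the first equality, exploiting the assumed nonsingularity of $X_{1}$. Direct block multiplication verifies the factorization
\[
\left[
\begin{array}{cc}
X_{1} & X_{12}\\
X_{21} & X_{2}
\end{array}
\right]  =\left[
\begin{array}{cc}
I & 0\\
X_{21}X_{1}^{-1} & I
\end{array}
\right]  \left[
\begin{array}{cc}
X_{1} & X_{12}\\
0 & X_{2}-X_{21}X_{1}^{-1}X_{12}
\end{array}
\right]  .
\]
The left factor is block lower triangular with identity diagonal blocks, so its determinant is $1$; the right factor is block upper triangular, so its determinant is $\det\left(  X_{1}\right)  \det\left(  X_{2}-X_{21}X_{1}^{-1}X_{12}\right)$. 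Taking determinants of both sides and using multiplicativity of $\det$ then yields the first claimed equality.

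For the second equality I would argue symmetrically, now using that $X_{2}$ is nonsingular, via the factorization
\[
\left[
\begin{array}{cc}
X_{1} & X_{12}\\
X_{21} & X_{2}
\end{array}
\right]  =\left[
\begin{array}{cc}
I & X_{12}X_{2}^{-1}\\
0 & I
\end{array}
\right]  \left[
\begin{array}{cc}
X_{1}-X_{12}X_{2}^{-1}X_{21} & 0\\
X_{21} & X_{2}
\end{array}
\right]  ,
\]
after which the same determinant computation gives $\det\left(  X_{2}\right)  \det\left(  X_{1}-X_{12}X_{2}^{-1}X_{21}\right)$. Alternatively, one can simultaneously swap the two block rows and the two block columns of the original matrix---an operation that multiplies the determinant by a sign that cancels---to interchange the roles of $X_{1}$ and $X_{2}$, and then invoke the first identity.

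No real obstacle arises here. The only ingredient that is not pure block arithmetic is the auxiliary fact that a block triangular matrix has determinant equal to the product of the determinants of its diagonal blocks; I would either cite this as standard or justify it quickly by a Laplace expansion (it is itself the special case of the present identity in which one off-diagonal block is zero). The factorizations above are confirmed by straightforward multiplication, so the proof is short.
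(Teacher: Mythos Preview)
Your proof is correct and is the standard block-triangular factorization argument for the Schur complement determinant formula. Note, however, that the paper does not actually prove this lemma: it is stated in the preliminaries section as a well-known result and simply cited from the reference \cite{cc02book}, so there is no in-paper proof to compare against.
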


Finally, we recall some facts about linear iteration.

\begin{definition}
(\cite{zld09aml}) For a linear iteration%
\begin{equation}
x_{k+1}=Mx_{k}+g,\text{ }g\in\mathbf{R}^{n}, \label{teqit}%
\end{equation}
where $M$ and $g$ are respectively constant matrix and vector and $\rho\left(
M\right)  <1,$ the asymptotic exponential convergence rate is $-\ln\left(
\rho\left(  M\right)  \right)  $.
\end{definition}

Basically, the smaller the number $\rho\left(  M\right)  ,$ the faster
convergence of the iteration (\ref{teqit}). See \cite{zld09aml} for explanations.

\subsection{Smith Iteration and Closed-Form Solutions of Standard Stein
Equation}

Consider the following standard Stein matrix equation%
\begin{equation}
X=AXB+C, \label{tstein}%
\end{equation}
where $A\in\mathbf{C}^{m\times m},B\in\mathbf{C}^{n\times n}$ and
$C\in\mathbf{C}^{m\times n}$ are known and $X\in\mathbf{C}^{m\times n}$ is to
be determined. The associated Smith iteration is defined as%
\begin{equation}
X_{k+1}=AX_{k}B+C,\text{ \ }\forall X_{0}\in\mathbf{C}^{m\times n}.
\label{tsmith}%
\end{equation}
It is known that the above Smith iteration converges to the unique solution of
equation (\ref{tstein}) for arbitrarily initial condition if and only if
$\rho\left(  A\right)  \rho\left(  B\right)  <1.$ Moreover, the asymptotic
exponential convergence rate is $-\ln\left(  \rho\left(  A\right)  \rho\left(
B\right)  \right)  $. Therefore, if $\rho\left(  A\right)  \rho\left(
B\right)  $ is very close to 1, the convergence is very slow. To improve the
convergence rate, the so-called Smith $\left(  l\right)  $ iteration can be
applied. The idea of Smith $\left(  l\right)  $ iteration is to combine $l$
steps of Smith iterations into one step. Notice that%
\begin{align}
X_{k+l}  &  =AX_{k+l-1}B+C\nonumber\\
&  =A\left(  AX_{k+l-2}B+C\right)  B+C\nonumber\\
&  =A^{2}X_{k+l-2}B^{2}+ACB+C\nonumber\\
&  =\cdots\nonumber\\
&  =A^{l}X_{k}B^{l}+\sum\limits_{i=0}^{l-1}A^{i}CB^{i}. \label{teq50}%
\end{align}
By denoting $X_{kl}=Y_{k},k=0,1,\ldots,$ we can rewrite (\ref{teq50}) as%
\begin{equation}
Y_{k+1}=A^{l}Y_{k}B^{l}+\sum\limits_{i=0}^{l-1}A^{i}CB^{i},\text{ }\forall
Y_{0}=X_{0}\in\mathbf{C}^{m\times n}. \label{teq30}%
\end{equation}
It is shown in \cite{zld09aml} that the above iteration converges to the
unique solution of (\ref{tstein}) for arbitrarily initial condition if and
only if $\rho\left(  A\right)  \rho\left(  B\right)  <1.$ Moreover, the
asymptotic exponential convergence rate is $-l\ln\left(  \rho\left(  A\right)
\rho\left(  B\right)  \right)  $. Therefore, the convergence rate of
(\ref{teq30}) increases as $l$ increases.

Though Smith $\left(  l\right)  $ iteration (\ref{teq30}) can improve the
convergence rate of the standard Smith iteration (\ref{tsmith}), it still
converges exponentially. In order to obtain super-exponential convergence rate
and by noting that the closed-form solution of equation (\ref{tstein}) is%
\[
X^{\star}=\sum\limits_{i=0}^{\infty}A^{i}CB^{i},\text{ \ \ }\rho\left(
A\right)  \rho\left(  B\right)  <1,
\]
the so-called $r$-Smith iteration can be constructed.

\begin{lemma}
\label{tlm5}(\cite{zld09aml}) Let $r\geq2$ be a prescribed positive integer.
Assume that $\rho\left(  A\right)  \rho\left(  B\right)  <1.$ Construct the
following iteration:%
\begin{equation}
X_{k+1}=\sum\limits_{i=0}^{r-1}A_{k}^{i}X_{k}B_{k}^{i},\text{ }A_{k+1}%
=A_{k}^{r},\text{ }B_{k+1}=B_{k}^{r},\text{ }k\geq0, \label{trsmith}%
\end{equation}
with initial condition $X_{0}=C,$ $A_{0}=A,$ and\ $B_{0}=B.$ Denote
$E_{k}=X^{\star}-X_{k}$ where $X^{\star}$ is the unique solution of
(\ref{tstein})$.$ Then $\lim_{k\rightarrow\infty}X_{k}=X^{\star}$ and%
\begin{equation}
X_{k}=\sum\limits_{i=0}^{r^{k}-1}A^{i}CB^{i},\text{ }E_{k}=A^{r^{k}}X^{\star
}B^{r^{k}},\text{ }k\geq0. \label{xk}%
\end{equation}

\end{lemma}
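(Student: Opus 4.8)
The plan is to verify the two displayed identities in \eqref{xk} by induction on $k$, and then deduce convergence from them. For the base case $k=0$ we have $X_0=C=\sum_{i=0}^{r^0-1}A^iCB^i$ (the sum has the single term $i=0$), and $E_0=X^\star-C=\sum_{i=1}^{\infty}A^iCB^i=A(\sum_{i=0}^{\infty}A^iCB^i)B=AX^\star B=A^{r^0}X^\star B^{r^0}$, using the closed-form expression for $X^\star$ recalled just before the lemma. For the inductive step, assume $X_k=\sum_{i=0}^{r^k-1}A^iCB^i$ and $A_k=A^{r^k}$, $B_k=B^{r^k}$ (the latter two are immediate from the recursions $A_{k+1}=A_k^r$, $B_{k+1}=B_k^r$ and a trivial separate induction). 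Then I would substitute into the first recursion of \eqref{trsmith}:
\[
X_{k+1}=\sum_{j=0}^{r-1}A_k^{\,j}X_kB_k^{\,j}=\sum_{j=0}^{r-1}A^{jr^k}\left(\sum_{i=0}^{r^k-1}A^iCB^i\right)B^{jr^k}=\sum_{j=0}^{r-1}\sum_{i=0}^{r^k-1}A^{jr^k+i}CB^{jr^k+i}.
\]
The key combinatorial observation is that as $j$ ranges over $\{0,\dots,r-1\}$ and $i$ over $\{0,\dots,r^k-1\}$, the exponent $jr^k+i$ ranges exactly once over $\{0,1,\dots,r^{k+1}-1\}$ — this is just base-$r^k$ digit decomposition. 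Hence $X_{k+1}=\sum_{i=0}^{r^{k+1}-1}A^iCB^i$, completing the induction for the first identity.

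For the error formula, from the first identity and the closed form $X^\star=\sum_{i=0}^{\infty}A^iCB^i$ (which converges absolutely because $\rho(A)\rho(B)<1$), I would write
\[
E_k=X^\star-X_k=\sum_{i=r^k}^{\infty}A^iCB^i=A^{r^k}\left(\sum_{i=0}^{\infty}A^iCB^i\right)B^{r^k}=A^{r^k}X^\star B^{r^k},
\]
where the middle step is the substitution $i\mapsto i+r^k$ together with the factoring out of $A^{r^k}$ on the left and $B^{r^k}$ on the right. Finally, to conclude $\lim_{k\to\infty}X_k=X^\star$, note that $\rho(A)\rho(B)<1$ implies $\rho(A^{r^k})\rho(B^{r^k})=(\rho(A)\rho(B))^{r^k}\to 0$, so $\|A^{r^k}X^\star B^{r^k}\|\le\|A^{r^k}\|\,\|X^\star\|\,\|B^{r^k}\|\to 0$ in any submultiplicative norm (using Gelfand's formula $\|A^{r^k}\|^{1/r^k}\to\rho(A)$); thus $E_k\to 0$.

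I do not anticipate a serious obstacle here: the only point requiring care is the bijection between pairs $(j,i)$ and exponents in $\{0,\dots,r^{k+1}-1\}$, which should be stated explicitly as a uniqueness-of-base-$r^k$-representation fact rather than left to the reader. One should also be slightly careful that the interchange/rearrangement of the doubly-infinite-to-finite sums in the $E_k$ computation is justified by absolute convergence, which holds under $\rho(A)\rho(B)<1$; I would mention this but not belabor it.
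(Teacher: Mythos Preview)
Your argument is correct: the induction on $k$ together with the base-$r^k$ digit bijection cleanly yields the closed form for $X_k$, and the tail-sum factorization gives $E_k=A^{r^k}X^\star B^{r^k}$, from which convergence follows under $\rho(A)\rho(B)<1$. Note, however, that the paper does not supply its own proof of this lemma---it is quoted from \cite{zld09aml}---so there is no in-paper argument to compare against; your proof is the standard one and would be an appropriate justification.
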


If $r=2,$ the $r$-Smith iteration reduces to the well-known Smith accelerative
iteration (\cite{smith68}). It is clear that the $r$-Smith iteration
(\ref{trsmith}) has a super-exponential convergence rate and is very effective
in obtaining numerical solutions of equation (\ref{tstein}) (\cite{zld09aml}).

In the following, we consider closed-form solutions of the Stein equation
(\ref{tstein}). Let the characteristic polynomial of $A$ be%
\[
h_{A}\left(  s\right)  =\det\left(  sI_{m}-A\right)  =s^{m}+\sum
\limits_{i=0}^{m-1}\alpha_{i}s^{i}.
\]
Then it is easy to verify that
\[
h_{A}\left(  s\right)  \triangleq\det\left(  I_{m}-sA\right)  =\alpha_{m}%
+\sum\limits_{i=1}^{m}\alpha_{m-i}s^{i},\text{ }\alpha_{m}=1.
\]

We then can cite the following result regarding closed-form solutions of
equation (\ref{tstein}).

\begin{lemma}
\label{tlm3}(\cite{jw03laa}) Let $A\in\mathbf{C}^{m\times m},B\in
\mathbf{C}^{n\times n}$ and $C\in\mathbf{C}^{m\times n}$ be given$.$ Then

\begin{enumerate}
\item If equation (\ref{tstein}) has a solution $X,$ then%
\[
Xh_{A}\left(  B\right)  =\sum\limits_{k=1}^{m}\sum\limits_{s=1}^{k}\alpha
_{k}A^{k-s}CB^{m-s}.
\]

\item If equation (\ref{tstein}) has a unique solution $X^{\star},$ then%
\[
X^{\star}=\left(  \sum\limits_{k=1}^{m}\sum\limits_{s=1}^{k}\alpha_{k}%
A^{k-s}CB^{m-s}\right)  \left(  h_{A}\left(  B\right)  \right)  ^{-1}.
\]

\end{enumerate}
\end{lemma}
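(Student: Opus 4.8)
The plan is to reduce everything to the classical Cayley--Hamilton identity applied to the operator $s \mapsto A(\cdot)B$ on $\mathbf{C}^{m\times n}$, or more concretely to the Kronecker-product reformulation. First I would vectorize equation~(\ref{tstein}): using~(\ref{axb1}), $X = AXB + C$ is equivalent to $(I_{mn} - B^{\mathrm{T}}\otimes A)\,\mathrm{vec}(X) = \mathrm{vec}(C)$. The eigenvalues of $B^{\mathrm{T}}\otimes A$ are the products $\lambda_i\mu_j$ of eigenvalues of $A$ and $B$, so the uniqueness hypothesis in part~2 is exactly $\det(I_{mn}-B^{\mathrm{T}}\otimes A)\neq 0$, i.e. $\lambda_i\mu_j\neq 1$ for all $i,j$. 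This is the natural place from which both statements will follow, but carrying the polynomial bookkeeping through the Kronecker picture is awkward, so instead I would work directly with the matrix recursion.

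For part~1, the key step is to iterate the equation $X = AXB + C$ to express $XB^k$ in terms of $X$ and lower-order terms. From $X = AXB + C$ we get, by repeated substitution,
\[
X = A^k X B^k + \sum_{s=1}^{k} A^{s-1} C B^{s-1},\qquad k\ge 1.
\]
Now multiply the original equation on the right by $h_A(B)$. The cleaner route: observe that for each power $A^k$ appearing in $h_A(s)=\det(sI_m - A)=\sum_{k=0}^{m}\alpha_k s^k$ (with $\alpha_m=1$), one has, from the displayed recursion, $A^k X B^k = X - \sum_{s=1}^{k} A^{s-1} C B^{s-1}$. Hence
\[
\sum_{k=0}^{m}\alpha_k\, A^k X B^k \;=\; \Bigl(\sum_{k=0}^m \alpha_k\Bigr) X \;-\; \sum_{k=1}^{m}\alpha_k \sum_{s=1}^{k} A^{s-1} C B^{s-1}.
\]
On the other hand, by the Cayley--Hamilton theorem, $\sum_{k=0}^{m}\alpha_k A^k = h_A(A) = 0$; more usefully, I would instead multiply the recursion $X = A^k X B^k + \sum_{s=1}^{k}A^{s-1}CB^{s-1}$ by $\alpha_k$, sum over $k=1,\dots,m$, and use $\sum_{k=1}^m \alpha_k A^k = -\alpha_0 I_m$ together with $\sum_{k=0}^m\alpha_k B^k = h_A(B)$ appearing on the right after regrouping the double sum. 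After reindexing the double sum $\sum_{k=1}^m \alpha_k\sum_{s=1}^k A^{s-1}CB^{s-1}$ so that the power of $B$ matches the form $B^{m-s}$ claimed in the statement (shifting $s \to m-s+1$ or similar), one lands on $X h_A(B) = \sum_{k=1}^{m}\sum_{s=1}^{k}\alpha_k A^{k-s} C B^{m-s}$. The main obstacle here is purely clerical: getting the index substitution in the double sum exactly right so that the exponents on $A$ and $B$ come out as $k-s$ and $m-s$, and confirming the convention $h_A(B)=\alpha_m B^m + \cdots$ versus $\det(B - \cdot)$ is tracked consistently. I expect this to require one careful pass but no real difficulty.

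For part~2, I would argue that the uniqueness of the solution forces $h_A(B)$ to be invertible, after which the formula is immediate from part~1 by right-multiplying by $\bigl(h_A(B)\bigr)^{-1}$. The invertibility claim is the one genuine point needing justification: uniqueness means $\lambda_i\mu_j \neq 1$ for all eigenvalues $\lambda_i$ of $A$ and $\mu_j$ of $B$; equivalently $\mu_j \neq 1/\lambda_i$ whenever $\lambda_i\neq 0$, and if $\lambda_i=0$ it contributes no constraint. Since $h_A(s) = \prod_i (1 - s\lambda_i)$ (this is $\det(I_m - sA)$, matching the normalization recalled just before the lemma), we get $h_A(B) = \prod_i (I_n - \lambda_i B)$, and $\det h_A(B) = \prod_i \det(I_n - \lambda_i B) = \prod_i \prod_j (1 - \lambda_i \mu_j) \neq 0$ by the uniqueness hypothesis. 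Hence $h_A(B)$ is nonsingular and part~2 follows. The only subtlety is making sure the polynomial $h_A$ used in the lemma is the reversed/normalized characteristic polynomial $\det(I_m - sA)$ (as the paragraph preceding the lemma sets up), not $\det(sI_m - A)$; with that convention everything is consistent.
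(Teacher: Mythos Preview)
The paper does not prove Lemma~\ref{tlm3}; it is quoted from \cite{jw03laa} with no argument given. So there is no in-paper proof to compare against, and the question reduces to whether your sketch is correct. It is.

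Your overall strategy---iterate $X=AXB+C$ to $X=A^kXB^k+\sum_{s=1}^{k}A^{s-1}CB^{s-1}$, combine with Cayley--Hamilton, then for part~2 verify $h_A(B)$ is nonsingular via $\det h_A(B)=\prod_{i,j}(1-\lambda_i\mu_j)$---is exactly right. The one place you hesitate, the index bookkeeping, goes through cleanly as follows. Multiply the iterated identity on the right by $B^{m-k}$ and reindex the inner sum via $s\mapsto k-s+1$ to obtain
\[
XB^{m-k}=A^{k}XB^{m}+\sum_{s=1}^{k}A^{k-s}CB^{m-s},\qquad k=0,1,\ldots,m.
\]
Now multiply by $\alpha_k$ and sum over $k$ from $0$ to $m$. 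Since the paper's convention is $h_A(B)=\det(I_m-BA)\big|_{\text{formal}}=\sum_{k=0}^{m}\alpha_k B^{m-k}$, the left side is precisely $Xh_A(B)$; on the right the term $\bigl(\sum_{k=0}^{m}\alpha_k A^{k}\bigr)XB^{m}$ vanishes by Cayley--Hamilton applied to the \emph{original} characteristic polynomial $\det(sI_m-A)=\sum_k\alpha_k s^k$, leaving exactly $\sum_{k=1}^{m}\sum_{s=1}^{k}\alpha_k A^{k-s}CB^{m-s}$. So your worry about the reindexing is unfounded once you first right-multiply by $B^{m-k}$ before summing. Your argument for the invertibility of $h_A(B)$ in part~2 is correct as written.
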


\section{\label{tsec1a}A Basic Result on Linear Equation}

Before giving detailed discussion to equation (\ref{teqG}), we present a basic
result on linear equation. This result is essential in the development of this paper.

\begin{proposition}
\label{tpp1}Let $A\in\mathbf{C}^{n\times n}$ and $c\in\mathbf{C}^{n}$ be
given. Then the linear equation
\begin{equation}
\left(  I_{n}-A^{2}\right)  \omega=\left(  I_{n}+A\right)  c, \label{teq90}%
\end{equation}
is solvable with unknown $\omega$ if and only if the linear equation%
\begin{equation}
\left(  I_{n}-A\right)  x=c, \label{teq92}%
\end{equation}
is solvable with unknown $x.$ More specifically,

\begin{enumerate}
\item If $x$ is a solution of equation (\ref{teq92}), then $\omega=x$ is a
solution of equation (\ref{teq90}).

\item If $\boldsymbol{\omega}$ is the general solution of (\ref{teq90}), then
the general solution of equation (\ref{teq92}) is%
\begin{equation}
\boldsymbol{x}=\frac{1}{2}\left(  I_{n}+A\right)  \boldsymbol{\omega}+\frac
{1}{2}c. \label{teq91}%
\end{equation}

\item If equation (\ref{teq90}) has a unique solution $\omega^{\star}$, then
equation (\ref{teq92}) also has a unique solution $x^{\star}=\omega^{\star}.$
\end{enumerate}
\end{proposition}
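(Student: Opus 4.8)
The plan is to exploit the algebraic factorization $I_n - A^2 = (I_n - A)(I_n + A) = (I_n + A)(I_n - A)$, which is the whole engine behind the statement. I would prove the two implications of the equivalence separately, and pick up the three itemized refinements along the way.

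First, suppose (\ref{teq92}) is solvable, say $(I_n - A)x = c$. Then, multiplying on the left by $(I_n + A)$ and using commutativity, $(I_n - A^2)x = (I_n + A)(I_n - A)x = (I_n + A)c$, so $\omega = x$ solves (\ref{teq90}). This immediately gives item 1. Conversely, suppose (\ref{teq90}) is solvable. I would invoke Lemma \ref{tlm13}: (\ref{teq90}) is solvable iff there is a generalized inverse $(I_n - A^2)^{-}$ with $(I_n - A^2)(I_n - A^2)^{-}(I_n + A)c = (I_n + A)c$. Rather than manipulate generalized inverses directly, the cleaner route is to take any particular solution $\omega$ of (\ref{teq90}) and verify that $x := \tfrac12(I_n + A)\omega + \tfrac12 c$ solves (\ref{teq92}). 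Indeed, $(I_n - A)x = \tfrac12(I_n - A)(I_n + A)\omega + \tfrac12(I_n - A)c = \tfrac12(I_n - A^2)\omega + \tfrac12(I_n - A)c = \tfrac12(I_n + A)c + \tfrac12(I_n - A)c = c$. That establishes the converse and simultaneously shows that the map $\omega \mapsto \tfrac12(I_n + A)\omega + \tfrac12 c$ sends solutions of (\ref{teq90}) to solutions of (\ref{teq92}).

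For item 2, I need that the formula (\ref{teq91}) produces the \emph{general} solution, i.e. that the above map is onto the solution set of (\ref{teq92}). Here I would argue: every solution $x$ of (\ref{teq92}) is hit, because by item 1, $\omega = x$ is a solution of (\ref{teq90}), hence lies in the range of $\boldsymbol{\omega}$, and plugging $\omega = x$ into (\ref{teq91}) gives $\tfrac12(I_n + A)x + \tfrac12 c = \tfrac12(x + Ax) + \tfrac12 c = \tfrac12(x + (x - c)) + \tfrac12 c = x$, using $Ax = x - c$ from (\ref{teq92}). So the map restricted to solutions-coming-from-item-1 already recovers all of the solution set of (\ref{teq92}); since we also showed every solution of (\ref{teq90}) maps \emph{into} the solution set of (\ref{teq92}), the image is exactly the solution set. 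For item 3, uniqueness: if (\ref{teq90}) has a unique solution $\omega^\star$, then $I_n - A^2$ is nonsingular (a consistent square linear system with a unique solution has nonsingular coefficient matrix), hence both $I_n - A$ and $I_n + A$ are nonsingular, so (\ref{teq92}) has the unique solution $x^\star = (I_n - A)^{-1}c$; and by item 1 applied to $x^\star$ together with uniqueness in (\ref{teq90}), $\omega^\star = x^\star$.

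The one place demanding a little care is item 2 — making sure ``general solution'' is handled correctly rather than just ``a solution.'' The subtlety is that $\boldsymbol{\omega}$ ranges over the full affine solution set of (\ref{teq90}), which can be strictly larger than $\{x : (I_n - A)x = c\}$ (it also contains solutions built from the kernel of $I_n + A$), so one must check both that (\ref{teq91}) never leaves the solution set of (\ref{teq92}) and that it covers all of it; the computation $Ax = x-c \Rightarrow \tfrac12(I_n+A)x + \tfrac12 c = x$ is what closes the surjectivity gap. Everything else is the routine commuting-factor bookkeeping, and no appeal beyond Lemma \ref{tlm13} is really needed once the particular-solution substitution is set up.
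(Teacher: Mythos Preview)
Your proof is correct, and your handling of item~2 is actually cleaner than the paper's. The paper proves item~2 by choosing a similarity $P$ that block-diagonalizes $A$ into pieces $R,J,K$ according to whether the eigenvalues lie outside $\{\pm 1\}$, equal $1$, or equal $-1$; it then writes out the general solution $\boldsymbol{\omega}$ of (\ref{teq90}) explicitly via generalized inverses $J_{-}^{-}$ and $K_{+}^{-}$, pushes it through the map $\omega\mapsto\tfrac12(I_n+A)\omega+\tfrac12 c$, and matches the result term by term against the explicit general solution of (\ref{teq92}). Your route sidesteps all of this structure: you check directly that the map lands in the solution set of (\ref{teq92}) (the same computation as the paper's (\ref{teq94})), and then get surjectivity by the one-line observation that any solution $x$ of (\ref{teq92}) is itself a solution of (\ref{teq90}) and is its own image under the map, using $Ax=x-c$. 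This is more elementary and requires no spectral decomposition or generalized inverses. What the paper's longer computation buys is that it makes the discrepancy between the two solution sets visible --- the extra free parameter $t$ attached to the eigenvalue $-1$ block $K$ is killed by $I_n+A$ --- which is exactly the content of the remark and Example~1 following the proposition; your argument establishes the same conclusion but leaves that structure implicit.
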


\begin{proof}
\textit{Proof of Item 1}. This is trivial since equation (\ref{teq90}) can be
written as $\left(  I_{n}+A\right)  \left(  \left(  I_{n}-A\right)
\omega-c\right)  =0$.

\textit{Proof of Item 2}. Let $P$ be some matrix such that%
\begin{equation}
A=P\left[
\begin{array}
[c]{ccc}%
R & 0 & 0\\
0 & J & 0\\
0 & 0 & K
\end{array}
\right]  P^{-1},\text{ }R\in\mathbf{C}^{n_{0}\times n_{0}},\text{ }%
J\in\mathbf{C}^{n_{1}\times n_{1}},\text{ }K\in\mathbf{C}^{n_{2}\times n_{2}},
\label{teq85}%
\end{equation}
where all the eigenvalues of $J$ are 1, all the eigenvalues of $K$ are $-1$
and $\left\{  -1,1\right\}  \cap\lambda\left\{  R\right\}  =\varnothing.$
Clearly, $n_{0}+n_{1}+n_{2}=n.$ Then we can compute%
\begin{align}
I_{n}\pm A  &  =P\left[
\begin{array}
[c]{ccc}%
R_{\pm} & 0 & 0\\
0 & J_{\pm} & 0\\
0 & 0 & K_{\pm}%
\end{array}
\right]  P^{-1},\text{ }\label{teq82}\\
I_{n}-A^{2}  &  =P\left[
\begin{array}
[c]{ccc}%
R_{+}R_{-} & 0 & 0\\
0 & J_{-}J_{+} & 0\\
0 & 0 & K_{+}K_{-}%
\end{array}
\right]  P^{-1}, \label{teq83}%
\end{align}
where $R_{\pm}=I_{n_{0}}\pm R,J_{\pm}=I_{n_{1}}\pm J$ and $K_{\pm}=I_{n_{2}%
}\pm K.$ According to Lemma \ref{tlm13} and (\ref{teq83}), the general
closed-form solution of (\ref{teq90}) can be written as%
\begin{equation}
\boldsymbol{\omega}=P\left[
\begin{array}
[c]{c}%
0\\
J_{+}^{-1}\left(  I_{n_{1}}-J_{-}^{-}J_{-}\right)  s\\
K_{-}^{-1}\left(  I_{n_{1}}-K_{+}^{-}K_{+}\right)  t
\end{array}
\right]  +\omega^{\dag},\text{ }\forall s\in\mathbf{R}^{n_{1}},\forall
t\in\mathbf{R}^{n_{2}}, \label{teq84}%
\end{equation}
where $J_{-}^{-}$ and $K_{+}^{-}$ are any fixed generalized inversion of
$J_{-}$ and $K_{+}$, respectively, and $\omega^{\dag}$ is a special solution
of equation (\ref{teq90}), namely,%
\begin{equation}
\left(  I_{n}-A^{2}\right)  \omega^{\dag}=\left(  I_{n}+A\right)  c.
\label{teq26}%
\end{equation}
Let
\[
x^{\dag}\triangleq\frac{1}{2}\left(  I_{n}+A\right)  \omega^{\dag}+\frac{1}%
{2}c.
\]
Then, it follows from (\ref{teq26}) that%
\begin{align}
\left(  I_{n}-A\right)  x^{\dag}  &  =\left(  I_{n}-A\right)  \left(  \frac
{1}{2}\left(  I_{n}+A\right)  \omega^{\dag}+\frac{1}{2}c\right) \nonumber\\
&  =\frac{1}{2}\left(  I_{n}-A^{2}\right)  \omega^{\dag}+\frac{1}{2}\left(
I_{n}-A\right)  c\nonumber\\
&  =\frac{1}{2}\left(  I_{n}+A\right)  c+\frac{1}{2}\left(  I_{n}-A\right)
c\nonumber\\
&  =c, \label{teq94}%
\end{align}
namely, $x^{\dag}$ is a particular solution of equation (\ref{teq92}). Hence,
by using (\ref{teq85})--(\ref{teq84}), we have%
\begin{align}
\frac{1}{2}\left(  I_{n}+A\right)  \boldsymbol{\omega}+\frac{1}{2}c  &
=\frac{1}{2}P\left[
\begin{array}
[c]{ccc}%
R_{+} & 0 & 0\\
0 & J_{+} & 0\\
0 & 0 & K_{+}%
\end{array}
\right]  P^{-1}P\left[
\begin{array}
[c]{c}%
0\\
J_{+}^{-1}\left(  I_{n_{1}}-J_{-}^{-}J_{-}\right)  s\\
K_{-}^{-1}\left(  I_{n_{2}}-K_{+}^{-}K_{+}\right)  t
\end{array}
\right] \nonumber\\
&  \text{ \ \ \ }+\frac{1}{2}\left(  I_{n}+A\right)  \omega^{\dag}+\frac{1}%
{2}c\nonumber\\
&  =\frac{1}{2}P\left[
\begin{array}
[c]{c}%
0\\
\left(  I_{n_{1}}-J_{-}^{-}J_{-}\right)  s\\
K_{+}K_{-}^{-1}\left(  I_{n_{2}}-K_{+}^{-}K_{+}\right)  t
\end{array}
\right]  +x^{\dag}\nonumber\\
&  =\frac{1}{2}P\left[
\begin{array}
[c]{c}%
0\\
\left(  I_{n_{1}}-J_{-}^{-}J_{-}\right)  s\\
K_{-}^{-1}K_{+}\left(  I_{n_{2}}-K_{+}^{-}K_{+}\right)  t
\end{array}
\right]  +x^{\dag}\nonumber\\
&  =\frac{1}{2}P\left[
\begin{array}
[c]{c}%
0\\
\left(  I_{n_{1}}-J_{-}^{-}J_{-}\right)  s\\
K_{-}^{-1}\left(  K_{+}-K_{+}K_{+}^{-}K_{+}\right)  t
\end{array}
\right]  +x^{\dag}\nonumber\\
&  =P\left[
\begin{array}
[c]{c}%
0\\
\left(  I_{n_{1}}-J_{-}^{-}J_{-}\right)  s^{\prime}\\
0
\end{array}
\right]  +x^{\dag}, \label{teq88}%
\end{align}
where we have used the fact that $K_{+}=K_{+}K_{+}^{-}K_{+}$ and denoted
$s^{\prime}=\frac{1}{2}s$. On the other hand, in view of (\ref{teq94}) and by
using Lemma \ref{tlm13} again, the general solution of equation (\ref{teq92})
can be expressed as%
\begin{equation}
\boldsymbol{x}=P\left[
\begin{array}
[c]{c}%
0\\
\left(  I_{n_{1}}-J_{-}^{-}J_{-}\right)  s^{\prime}\\
0
\end{array}
\right]  +x^{\dag},\text{ }\forall s^{\prime}\in\mathbf{C}^{n_{1}}.
\label{teq89}%
\end{equation}
Comparing (\ref{teq88}) with (\ref{teq89}), we clearly see that
$\boldsymbol{x}$ in (\ref{teq91}) is the general solution of equation
(\ref{teq92}).

\textit{Proof of Item 3}. Equation (\ref{teq90}) has a unique solution
\[
\omega^{\star}=\left(  I_{n}-A^{2}\right)  ^{-1}\left(  I_{n}+A\right)
c=\left(  I_{n}-A\right)  ^{-1}c,
\]
if and only if $1\notin\lambda\{A^{2}\}$ which implies that $1\notin
\lambda\{A\},$ namely, equation (\ref{teq92}) has a unique solution $x^{\star
}=\left(  I_{n}-A\right)  ^{-1}c$. The proof is finished.
\end{proof}

We should point out that there is an important difference between Item 2 and
Item 1 of Proposition \ref{tpp1}: even though $\boldsymbol{x}$ is the general
solution of equation (\ref{teq92}), $\boldsymbol{\omega}=\boldsymbol{x}$ is
not the general solution of equation (\ref{teq90}). This can be observed from
(\ref{teq84}) and (\ref{teq89}). For illustration, we give an example.

\begin{example}
Let $A$ and $c$ in equations (\ref{teq90}) and (\ref{teq92}) be chosen as%
\[
A=\left[
\begin{array}
[c]{cc}%
1 & 0\\
0 & -1
\end{array}
\right]  ,\text{ \ }c=\left[
\begin{array}
[c]{c}%
0\\
1
\end{array}
\right]  .
\]
Then it is easy to see that the general solutions of equations (\ref{teq92})
and (\ref{teq90}) are, respectively,%
\[
\boldsymbol{x}=\left[
\begin{array}
[c]{c}%
t\\
\frac{1}{2}%
\end{array}
\right]  ,\text{ }\boldsymbol{\omega}=\left[
\begin{array}
[c]{c}%
t\\
s
\end{array}
\right]  ,\text{ \ }\forall t,s\in\mathbf{C}.
\]
It follows that there are two free parameters in $\boldsymbol{\omega}$ while
there is only one free parameter in $\boldsymbol{x}$ and thus
$\boldsymbol{\omega}=\boldsymbol{x}$ is not the general solution of equation
(\ref{teq90}).
\end{example}

Also, if equation (\ref{teq92}) has a unique solution, we cannot in general
conclude that equation (\ref{teq90}) has a unique solution. The simple
counterexample can be constructed by letting $-1\in\lambda\{A\}.$

\section{\label{tsec2}Equation $X=AX^{\mathrm{T}}B+C$}

In this section, we consider equation (\ref{teqG}) with $f\left(  X\right)
=X^{\mathrm{T}},$ namely, the following matrix equation%
\begin{equation}
X=AX^{\mathrm{T}}B+C, \label{teq}%
\end{equation}
where $A\in\mathbf{C}^{m\times n},B\in\mathbf{C}^{m\times n}$ and
$C\in\mathbf{C}^{m\times n}$ are some given matrices and $X\in\mathbf{C}%
^{m\times n}$ is a matrix to be determined.

\subsection{Solvability and Closed-Form Solutions}

By taking $\mathrm{vec}\left(  \cdot\right)  $ on both sides of (\ref{teq})
and using Lemma \ref{tlm0}, we have%
\begin{align}
\mathrm{vec}\left(  C\right)   &  =\left(  I_{mn}-\left(  B^{\mathrm{T}%
}\otimes A\right)  P_{\left(  m,n\right)  }\right)  \mathrm{vec}\left(
X\right) \nonumber\\
&  =\left(  P_{\left(  n,m\right)  }-B^{\mathrm{T}}\otimes A\right)
P_{\left(  m,n\right)  }\mathrm{vec}\left(  X\right)  . \label{teq32}%
\end{align}
Therefore, the following simple result can be obtained.

\begin{lemma}
\label{tlm7}Equation (\ref{teq}) is solvable if and only if%
\begin{equation}
\mathrm{rank}\left[
\begin{array}
[c]{cc}%
P_{\left(  n,m\right)  }-B^{\mathrm{T}}\otimes A & \mathrm{vec}\left(
C\right)
\end{array}
\right]  =\mathrm{rank}\left[
\begin{array}
[c]{c}%
P_{\left(  n,m\right)  }-B^{\mathrm{T}}\otimes A
\end{array}
\right]  . \label{teq31}%
\end{equation}
Moreover, it has a unique solution for arbitrary $C$ if and only if%
\begin{equation}
1\notin\lambda\left\{  \left(  B^{\mathrm{T}}\otimes A\right)  P_{\left(
m,n\right)  }\right\}  . \label{tcond1}%
\end{equation}

\end{lemma}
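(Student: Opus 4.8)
The plan is to derive everything from the vectorized form (\ref{teq32}), which already expresses equation (\ref{teq}) as a single linear system in $\mathrm{vec}(X)$. Rewriting (\ref{teq32}) as
\[
\left(  P_{\left(  n,m\right)  }-B^{\mathrm{T}}\otimes A\right)  P_{\left(  m,n\right)  }\mathrm{vec}\left(  X\right)  =\mathrm{vec}\left(  C\right),
\]
I would first note that $P_{(m,n)}$ is unitary (Lemma \ref{tlm0}, item 2), hence invertible, so the map $\mathrm{vec}(X)\mapsto P_{(m,n)}\mathrm{vec}(X)$ is a bijection of $\mathbf{C}^{mn}$. Consequently (\ref{teq}) is solvable in $X$ if and only if the linear system $Mz=\mathrm{vec}(C)$ is solvable in $z$, where $M=P_{(n,m)}-B^{\mathrm{T}}\otimes A$; the substitution $z=P_{(m,n)}\mathrm{vec}(X)$ is invertible and does not change solvability.

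Next I would invoke the standard solvability criterion for linear systems: $Mz=b$ is solvable if and only if $\mathrm{rank}\,[\,M\ \ b\,]=\mathrm{rank}\,M$ (equivalently, $b$ lies in the column space of $M$; this is the content of Lemma \ref{tlm13} rephrased via ranks, or simply elementary linear algebra). Applying this with $b=\mathrm{vec}(C)$ yields exactly (\ref{teq31}). This settles the first assertion.

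For the uniqueness statement, observe that the same bijection argument shows $X$ is unique (for a given $C$) if and only if $z$ is unique, i.e. if and only if $M$ has trivial kernel, i.e. $M$ is nonsingular. Now write $M=P_{(n,m)}-B^{\mathrm{T}}\otimes A=P_{(n,m)}\left(I_{mn}-P_{(n,m)}^{-1}(B^{\mathrm{T}}\otimes A)\right)=P_{(n,m)}\left(I_{mn}-P_{(m,n)}(B^{\mathrm{T}}\otimes A)\right)$, using $P_{(n,m)}^{-1}=P_{(m,n)}$ from Lemma \ref{tlm0}, item 3. Since $P_{(n,m)}$ is invertible, $M$ is nonsingular iff $I_{mn}-P_{(m,n)}(B^{\mathrm{T}}\otimes A)$ is nonsingular, i.e. iff $1\notin\lambda\{P_{(m,n)}(B^{\mathrm{T}}\otimes A)\}$. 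Finally, $P_{(m,n)}(B^{\mathrm{T}}\otimes A)$ and $(B^{\mathrm{T}}\otimes A)P_{(m,n)}$ are similar (conjugate by $P_{(m,n)}$, or just note $XY$ and $YX$ always have the same nonzero spectrum and here both are $mn\times mn$ so the full spectra coincide), so this is equivalent to (\ref{tcond1}). I should state "unique solution for arbitrary $C$" carefully: nonsingularity of $M$ is exactly what makes $Mz=b$ uniquely solvable for every $b$, so the "for arbitrary $C$" phrasing is automatic.

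The only mild subtlety — not really an obstacle — is making sure the reduction from "unique $X$" to "nonsingular $M$" genuinely uses that $\mathrm{vec}$ and the multiplication by the unitary $P_{(m,n)}$ are both bijections, and that one wants uniqueness for all $C$ rather than for one particular $C$; once that is spelled out, the rest is a routine chain of similarity/invertibility equivalences drawing on the already-established properties of $P_{(m,n)}$ in Lemma \ref{tlm0}.
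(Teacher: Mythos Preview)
Your proposal is correct and is precisely the elaboration the paper leaves implicit: the paper simply states that Lemma~\ref{tlm7} follows from the vectorized form~(\ref{teq32}) without writing out a proof, and your argument fills in exactly those routine steps (bijectivity of $P_{(m,n)}$, the rank criterion for solvability, and nonsingularity for uniqueness). One minor simplification: for the uniqueness part you can read off $1\notin\lambda\{(B^{\mathrm{T}}\otimes A)P_{(m,n)}\}$ directly from the first line of~(\ref{teq32}), $\mathrm{vec}(C)=\bigl(I_{mn}-(B^{\mathrm{T}}\otimes A)P_{(m,n)}\bigr)\mathrm{vec}(X)$, without passing through the factorization of $M$ and the similarity argument.
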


Notice that for test condition (\ref{tcond1}), we need to compute the
eigenvalues of a matrix having dimension $mn,$ which is demanding in practice.
In the following, we will provide an alternative condition for the solvability
of this equation.

\begin{theorem}
\label{tth1}Matrix equation (\ref{teq}) is solvable if and only if%
\begin{equation}
W=AB^{\mathrm{T}}WA^{\mathrm{T}}B+AC^{\mathrm{T}}B+C, \label{teq1}%
\end{equation}
is solvable with unknown $W.$ More specifically,

\begin{enumerate}
\item If $X$ is a solution of equation (\ref{teq}), then $W=X$ is a solution
of equation (\ref{teq1}).

\item If $\boldsymbol{W}$ is the general solution of equation (\ref{teq1}),
then the general solution to equation (\ref{teq}) is%
\begin{equation}
\boldsymbol{X}=\frac{1}{2}\left(  \boldsymbol{W}+A\boldsymbol{W}^{\mathrm{T}%
}B\right)  +\frac{1}{2}C. \label{teq96}%
\end{equation}

\item Equation (\ref{teq}) has a unique solution $X^{\star}$ if equation
(\ref{teq1}) has a unique solution $W^{\star}$, namely,%
\begin{equation}
\eta\gamma\neq1,\text{ \ \ }\forall\eta,\gamma\in\lambda\left\{
AB^{\mathrm{T}}\right\}  . \label{tcond}%
\end{equation}
Moreover, $X^{\star}=W^{\star}.$
\end{enumerate}
\end{theorem}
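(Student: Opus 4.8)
The plan is to reduce Theorem \ref{tth1} to Proposition \ref{tpp1} by passing to vectorized form. Writing $x=\mathrm{vec}(X)$, $c=\mathrm{vec}(C)$ and $\mathcal{M}=(B^{\mathrm{T}}\otimes A)P_{(m,n)}$, equation (\ref{teq32}) shows that (\ref{teq}) is equivalent to the linear system $(I_{mn}-\mathcal{M})x=c$. The crucial observation is that $\mathcal{M}^2$ has a clean form: using property 4 of Lemma \ref{tlm0}, namely $P_{(m,n)}(B^{\mathrm{T}}\otimes A)=(A\otimes B^{\mathrm{T}})P_{(n,m)}$ (applied with the appropriate dimensions), together with $P_{(m,n)}P_{(n,m)}=I_{mn}$, I expect
\[
\mathcal{M}^2=(B^{\mathrm{T}}\otimes A)P_{(m,n)}(B^{\mathrm{T}}\otimes A)P_{(m,n)}=(B^{\mathrm{T}}\otimes A)(A\otimes B^{\mathrm{T}})=(B^{\mathrm{T}}A)\otimes(AB^{\mathrm{T}}),
\]
which is exactly the coefficient matrix obtained by vectorizing $W\mapsto AB^{\mathrm{T}}WA^{\mathrm{T}}B$ via (\ref{axb1}). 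Likewise $\mathcal{M}c=(B^{\mathrm{T}}\otimes A)P_{(m,n)}\mathrm{vec}(C)=(B^{\mathrm{T}}\otimes A)\mathrm{vec}(C^{\mathrm{T}})=\mathrm{vec}(AC^{\mathrm{T}}B)$, so $(I_{mn}+\mathcal{M})c=\mathrm{vec}(AC^{\mathrm{T}}B+C)$. Hence the vectorization of (\ref{teq1}) is precisely $(I_{mn}-\mathcal{M}^2)w=(I_{mn}+\mathcal{M})c$ with $w=\mathrm{vec}(W)$.

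With these identities in hand, equations (\ref{teq}) and (\ref{teq1}) are, after vectorization, exactly equations (\ref{teq92}) and (\ref{teq90}) of Proposition \ref{tpp1} with $A$ there replaced by $\mathcal{M}$ and $c$ there by $c=\mathrm{vec}(C)$. So I would invoke Proposition \ref{tpp1} directly. Item 1 of the theorem follows from Item 1 of the proposition. Item 2 follows from Item 2: the proposition gives $\mathrm{vec}(\boldsymbol{X})=\tfrac12(I_{mn}+\mathcal{M})\mathrm{vec}(\boldsymbol{W})+\tfrac12\mathrm{vec}(C)$; since $\mathcal{M}\,\mathrm{vec}(\boldsymbol{W})=(B^{\mathrm{T}}\otimes A)P_{(m,n)}\mathrm{vec}(\boldsymbol{W})=(B^{\mathrm{T}}\otimes A)\mathrm{vec}(\boldsymbol{W}^{\mathrm{T}})=\mathrm{vec}(A\boldsymbol{W}^{\mathrm{T}}B)$, this un-vectorizes to (\ref{teq96}). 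Item 3 follows from Item 3 of the proposition together with the spectral mapping/Kronecker-spectrum fact that $\lambda\{\mathcal{M}^2\}=\lambda\{(B^{\mathrm{T}}A)\otimes(AB^{\mathrm{T}})\}=\{\mu\nu:\mu\in\lambda\{B^{\mathrm{T}}A\},\ \nu\in\lambda\{AB^{\mathrm{T}}\}\}$, and the standard fact that the nonzero eigenvalues of $B^{\mathrm{T}}A$ and $AB^{\mathrm{T}}$ coincide, so $1\notin\lambda\{\mathcal{M}^2\}$ is equivalent to $\eta\gamma\neq1$ for all $\eta,\gamma\in\lambda\{AB^{\mathrm{T}}\}$; this is condition (\ref{tcond}).

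The main obstacle is purely bookkeeping: getting the dimension subscripts on the permutation matrices $P_{(\cdot,\cdot)}$ right throughout, since $A$ and $B$ here are $m\times n$ (not square), so $B^{\mathrm{T}}\otimes A$ is $mn\times mn$ but the intermediate factors in the $\mathcal{M}^2$ computation require carefully matching $P_{(m,n)}$ versus $P_{(n,m)}$ via (\ref{teq98}). I would verify $\mathcal{M}^2=(B^{\mathrm{T}}A)\otimes(AB^{\mathrm{T}})$ slowly once, treating the case $-1\in\lambda\{\mathcal{M}\}$ (where uniqueness of $W$ may fail while $X$ is still unique, cf.\ the remark after Proposition \ref{tpp1}) as the reason Item 3 is only a one-directional implication. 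Everything else is a direct transcription of Proposition \ref{tpp1} through the $\mathrm{vec}$ isomorphism, so no genuinely new idea is needed beyond the observation that $X=AX^{\mathrm{T}}B+C$ squared-and-symmetrized becomes a genuine Stein equation.
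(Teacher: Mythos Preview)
Your proposal is correct and follows essentially the same approach as the paper: vectorize both equations, identify $\mathcal{M}=(B^{\mathrm{T}}\otimes A)P_{(m,n)}$ (the paper calls it $\mathit{\Upsilon}$), use (\ref{teq98}) to show $\mathcal{M}^2=(B^{\mathrm{T}}A)\otimes(AB^{\mathrm{T}})$, and then invoke Proposition~\ref{tpp1} directly; the treatment of Item~3 via the shared nonzero eigenvalues of $AB^{\mathrm{T}}$ and $B^{\mathrm{T}}A$ is also exactly what the paper does. Your remark about the one-directional nature of Item~3 (tied to the possibility $-1\in\lambda\{\mathcal{M}\}$) is likewise in line with the paper's discussion following the theorem.
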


\begin{proof}
\textit{Proofs of Item 1 and Item 2}. By using (\ref{teq98}), we know that%
\begin{equation}
P_{\left(  m,n\right)  }\left(  B^{\mathrm{T}}\otimes A\right)  P_{\left(
m,n\right)  }=A\otimes B^{\mathrm{T}}. \label{teq104}%
\end{equation}
With this, by taking $\mathrm{vec}\left(  \cdot\right)  $ on both sides of
(\ref{teq1}) and denoting $\omega=\mathrm{vec}\left(  W\right)  ,$ we get%
\begin{align*}
\omega &  =\left(  B^{\mathrm{T}}\otimes A\right)  \mathrm{vec}\left(
B^{\mathrm{T}}WA^{\mathrm{T}}\right)  +\mathrm{vec}\left(  AC^{\mathrm{T}%
}B\right)  +\mathrm{vec}\left(  C\right) \\
&  =\left(  B^{\mathrm{T}}\otimes A\right)  \left(  A\otimes B^{\mathrm{T}%
}\right)  \omega+\left(  I_{mn}+\left(  B^{\mathrm{T}}\otimes A\right)
P_{\left(  m,n\right)  }\right)  \mathrm{vec}\left(  C\right) \\
&  =\left(  B^{\mathrm{T}}\otimes A\right)  P_{\left(  m,n\right)  }\left(
B^{\mathrm{T}}\otimes A\right)  P_{\left(  m,n\right)  }\omega+\left(
I_{mn}+\left(  B^{\mathrm{T}}\otimes A\right)  P_{\left(  m,n\right)
}\right)  \mathrm{vec}\left(  C\right) \\
&  =\mathit{\Upsilon}^{2}\omega+\left(  I_{mn}+\mathit{\Upsilon}\right)
\mathrm{vec}\left(  C\right)  ,
\end{align*}
where $\mathit{\Upsilon}=\left(  B^{\mathrm{T}}\otimes A\right)  P_{\left(
m,n\right)  }.$ The above equation can also be written as%
\begin{equation}
\left(  I_{mn}-\mathit{\Upsilon}^{2}\right)  \omega=\left(  I_{mn}%
+\mathit{\Upsilon}\right)  \mathrm{vec}\left(  C\right)  . \label{teq101}%
\end{equation}
Similarly, taking $\mathrm{vec}\left(  \cdot\right)  $ on both sides of
(\ref{teq}) and denoting $x=\mathrm{vec}\left(  X\right)  $ produce%
\begin{equation}
\left(  I_{mn}-\mathit{\Upsilon}\right)  x=\mathrm{vec}\left(  C\right)  ,
\label{teq102}%
\end{equation}
and taking $\mathrm{vec}\left(  \cdot\right)  $ on both sides of expression
(\ref{teq96}) yields%
\begin{equation}
\mathrm{vec}\left(  \boldsymbol{X}\right)  =\frac{1}{2}\left(  I_{mn}%
+\mathit{\Upsilon}\right)  \mathrm{vec}\left(  \boldsymbol{W}\right)
+\frac{1}{2}\mathrm{vec}\left(  C\right)  . \label{teq103}%
\end{equation}
We notice that equation (\ref{teq101}) is in the form of (\ref{teq90}),
equation (\ref{teq102}) is in the form of (\ref{teq92}) and expression
(\ref{teq103}) is similar to (\ref{teq91}). Then Item 1 and Item 2 follow from
Proposition \ref{tpp1} directly.

\textit{Proof of Item 3}. From the above proof and Proposition \ref{tpp1}, we
conclude that equation (\ref{teq}) has a unique solution if equation
(\ref{teq1}) has a unique solution which is equivalent to%
\begin{equation}
1\notin\lambda\left\{  \left(  A^{\mathrm{T}}B\right)  ^{\mathrm{T}}\otimes
AB^{\mathrm{T}}\right\}  =\lambda\left\{  B^{\mathrm{T}}A\otimes
AB^{\mathrm{T}}\right\}  . \label{teq2}%
\end{equation}

The remaining is to show that (\ref{tcond}) and (\ref{teq2}) are equivalent.
Without loss of generality, we assume that $n\geq m.$ Let $\lambda\left\{
AB^{\mathrm{T}}\right\}  =\{\eta_{1},\eta_{2},\ldots,\eta_{m}\}.$ Then%
\[
\lambda\left\{  B^{\mathrm{T}}A\right\}  =\{\eta_{1},\eta_{2},\ldots,\eta
_{m},0,0,\ldots,0\}.
\]
As a result, we have%
\[
\lambda\left\{  B^{\mathrm{T}}A\otimes AB^{\mathrm{T}}\right\}  =\left\{
\eta_{i}\eta_{j},0,0,\ldots,0\right\}  ,i,j\in\{1,2,\ldots,m\}.
\]
Therefore, (\ref{teq2}) is true if and only if (\ref{tcond}) is satisfied. The
proof is completed.
\end{proof}

One may ask whether the condition in (\ref{tcond}) is necessary for the
uniqueness of solution of equation (\ref{teq}). The answer is negative, as the
following example indicates.

\begin{example}
\label{texamp1}Consider a linear equation in the form of (\ref{teq}) with%
\begin{equation}
A=\left[
\begin{array}
[c]{cc}%
2 & 0\\
1 & \alpha
\end{array}
\right]  ,\text{ }B=\left[
\begin{array}
[c]{cc}%
1 & 0\\
0 & 1
\end{array}
\right]  , \label{teqab}%
\end{equation}
where $\alpha$ is to be determined. By direct computation, the characteristic
polynomial of $\left(  B^{\mathrm{T}}\otimes A\right)  P_{\left(  m,n\right)
}$ is $(s-2)(s-a)(s^{2}-2\alpha)=0.$ Therefore, according to (\ref{tcond1}),
the equation in (\ref{teq}) has a unique solution for arbitrary $C$ if and
only if $\alpha\neq1$ and $\alpha\neq\frac{1}{2}.$ On the other hand, as
$\lambda\left\{  B^{\mathrm{T}}A\right\}  =\{2,\alpha\},$ the condition in
(\ref{tcond}) is equivalent to $\alpha\neq\pm1$ and $\alpha\neq\frac{1}{2}.$
Hence, if $\alpha=-1,$ condition (\ref{tcond}) is not satisfied while the
equation (\ref{teq}) has a unique solution. This clearly implies that
(\ref{tcond}) is only a sufficient but not a necessary condition for the
existence of unique solution of equation (\ref{teq}) for arbitrary $C$.
\end{example}

Though Theorem \ref{tth1} only provides sufficient condition, the advantage is
that the condition in (\ref{tcond}) is easier to test than (\ref{tcond1})
because only the eigenvalues of a matrix of dimension $m$ is required.

\begin{remark}
If $n\leq m,$ we need only to compute the eigenvalues of a matrix of dimension
$n$ because condition (\ref{tcond}) can be replaced by $\eta\gamma
\neq1,\ \forall\eta,\gamma\in\lambda\left\{  BA^{\mathrm{T}}\right\}  .$
\end{remark}

By combining Lemma \ref{tlm3} and Theorem \ref{tth1}, we have the following
result regarding the closed-form solutions of equation (\ref{teq}).

\begin{theorem}
\label{tth6}Let $A\in\mathbf{C}^{m\times n},B\in\mathbf{C}^{m\times n}%
,C\in\mathbf{C}^{m\times n}$ and%
\[
h_{AB^{\mathrm{T}}}\left(  s\right)  =\det\left(  I_{m}-sAB^{\mathrm{T}%
}\right)  =\alpha_{m}+\sum\limits_{i=1}^{m}\alpha_{m-i}s^{i},\text{ }%
\alpha_{m}=1.
\]

\begin{enumerate}
\item If equation (\ref{teq}) has a solution $X,$ then%
\[
Xh_{AB^{\mathrm{T}}}\left(  A^{\mathrm{T}}B\right)  =\sum\limits_{k=1}^{m}%
\sum\limits_{s=1}^{k}\alpha_{k}\left(  AB^{\mathrm{T}}\right)  ^{k-s}\left(
AC^{\mathrm{T}}B+C\right)  \left(  A^{\mathrm{T}}B\right)  ^{m-s}.
\]

\item If (\ref{tcond}) is satisfied, then equation (\ref{teq}) has a unique
solution $X^{\star}\ $given by%
\[
X^{\star}=\left(  \sum\limits_{k=1}^{m}\sum\limits_{s=1}^{k}\alpha_{k}\left(
AB^{\mathrm{T}}\right)  ^{k-s}\left(  AC^{\mathrm{T}}B+C\right)  \left(
A^{\mathrm{T}}B\right)  ^{m-s}\right)  \left(  h_{AB^{\mathrm{T}}}\left(
A^{\mathrm{T}}B\right)  \right)  ^{-1}.
\]

\end{enumerate}
\end{theorem}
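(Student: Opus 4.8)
The plan is to derive Theorem~\ref{tth6} as an immediate corollary of Theorem~\ref{tth1} together with Lemma~\ref{tlm3}. By Item~1 of Theorem~\ref{tth1}, any solution $X$ of equation~(\ref{teq}) is also a solution of the auxiliary Stein equation~(\ref{teq1}), which has the standard form $W=\mathcal{A}W\mathcal{B}+\mathcal{C}$ with the identifications $\mathcal{A}=AB^{\mathrm{T}}\in\mathbf{C}^{m\times m}$, $\mathcal{B}=A^{\mathrm{T}}B\in\mathbf{C}^{n\times n}$, and $\mathcal{C}=AC^{\mathrm{T}}B+C\in\mathbf{C}^{m\times n}$. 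Thus I would simply apply Lemma~\ref{tlm3} with these substitutions. Its Item~1 gives
\[
X h_{\mathcal{A}}\left(  \mathcal{B}\right)  =\sum_{k=1}^{m}\sum_{s=1}^{k}\alpha_{k}\mathcal{A}^{k-s}\mathcal{C}\mathcal{B}^{m-s},
\]
and since $h_{\mathcal{A}}=h_{AB^{\mathrm{T}}}$ by definition of the coefficients $\alpha_{i}$ in the statement, substituting $\mathcal{A}=AB^{\mathrm{T}}$, $\mathcal{B}=A^{\mathrm{T}}B$, $\mathcal{C}=AC^{\mathrm{T}}B+C$ yields Item~1 of the theorem verbatim.

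For Item~2, I would first invoke Item~3 of Theorem~\ref{tth1}: condition~(\ref{tcond}), i.e.\ $\eta\gamma\neq1$ for all $\eta,\gamma\in\lambda\{AB^{\mathrm{T}}\}$, guarantees that the auxiliary equation~(\ref{teq1}) has a unique solution $W^{\star}$ and that equation~(\ref{teq}) then also has a unique solution $X^{\star}$ with $X^{\star}=W^{\star}$. Note that, as established in the proof of Theorem~\ref{tth1}, condition~(\ref{tcond}) is exactly equivalent to $1\notin\lambda\{B^{\mathrm{T}}A\otimes AB^{\mathrm{T}}\}=\lambda\{\mathcal{B}^{\mathrm{T}}\otimes\mathcal{A}\}$, which is precisely the nonsingularity condition needed for the Stein equation~(\ref{teq1}) to have a unique solution; this also ensures $h_{AB^{\mathrm{T}}}(A^{\mathrm{T}}B)$ is invertible, so the formula in Item~2 of Lemma~\ref{tlm3} applies. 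Applying that formula with the same substitutions gives
\[
X^{\star}=W^{\star}=\left(  \sum_{k=1}^{m}\sum_{s=1}^{k}\alpha_{k}\left(  AB^{\mathrm{T}}\right)^{k-s}\left(  AC^{\mathrm{T}}B+C\right)\left(  A^{\mathrm{T}}B\right)^{m-s}\right)\left(  h_{AB^{\mathrm{T}}}\left(  A^{\mathrm{T}}B\right)\right)^{-1},
\]
which is the claimed expression.

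Honestly, there is no serious obstacle here; the theorem is a direct composition of two already-proved results, and the only thing requiring a word of care is checking the bookkeeping of the substitution $\mathcal{A}\mapsto AB^{\mathrm{T}}$, $\mathcal{B}\mapsto A^{\mathrm{T}}B$, $\mathcal{C}\mapsto AC^{\mathrm{T}}B+C$ and confirming that the characteristic-polynomial convention $h_{AB^{\mathrm{T}}}(s)=\det(I_m-sAB^{\mathrm{T}})$ matches the one used in Lemma~\ref{tlm3}. If I wanted to be fully self-contained I could also remark that the invertibility of $h_{AB^{\mathrm{T}}}(A^{\mathrm{T}}B)$ under~(\ref{tcond}) follows because its eigenvalues are, up to the factor coming from the zero eigenvalues of $B^{\mathrm{T}}A$ versus $AB^{\mathrm{T}}$, products $\prod_{j}(1-\eta_i\gamma_j)$ which are nonzero precisely when $\eta\gamma\neq1$ for all $\eta,\gamma\in\lambda\{AB^{\mathrm{T}}\}$ — but this is exactly the content already extracted in the proof of Theorem~\ref{tth1}, so I would just cite it. The proof is therefore short: state the substitution, cite Theorem~\ref{tth1} (Items~1 and~3) and Lemma~\ref{tlm3} (Items~1 and~2), and read off the two displayed formulas.
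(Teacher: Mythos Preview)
Your proposal is correct and takes essentially the same approach as the paper: the paper introduces Theorem~\ref{tth6} with the sentence ``By combining Lemma~\ref{tlm3} and Theorem~\ref{tth1}, we have the following result\ldots'' and gives no further proof, so your explicit bookkeeping of the substitution $\mathcal{A}=AB^{\mathrm{T}}$, $\mathcal{B}=A^{\mathrm{T}}B$, $\mathcal{C}=AC^{\mathrm{T}}B+C$ is exactly what the paper intends.
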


\subsection{Smith Iteration for Equation $X=AX^{\mathrm{T}}B+C$}

By recognizing the Smith iteration (\ref{tsmith}) for the standard Stein
equation (\ref{tstein}), we can construct the following Smith iteration for
matrix equation (\ref{teq}):%
\begin{equation}
X_{k+1}=AX_{k}^{\mathrm{T}}B+C,\text{ \ \ }\forall X_{0}\in\mathbf{C}^{m\times
n}. \label{teq21}%
\end{equation}
In the following, we will study the convergence of this iteration. First, we
introduce a lemma.

\begin{lemma}
\label{tlm1}Let $A,B\in\mathbf{C}^{m\times n}$ be given and $\mathit{\Upsilon
}=\left(  B^{\mathrm{T}}\otimes A\right)  P_{\left(  m,n\right)  }$. Then%
\[
\rho\left(  \mathit{\Upsilon}\right)  =\rho\left(  B^{\mathrm{T}}A\right)  .
\]

\end{lemma}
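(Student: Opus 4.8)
The plan is to show that the two matrices $\mathit{\Upsilon}=(B^{\mathrm{T}}\otimes A)P_{(m,n)}$ and $B^{\mathrm{T}}A$ share the same nonzero eigenvalues (with multiplicities), so that in particular their spectral radii coincide. The natural tool is Lemma \ref{tlm0}, item 4, which gives the intertwining identity $P_{(m,p)}(B\otimes A)=(A\otimes B)P_{(n,q)}$; specialized appropriately this lets one recognize $\mathit{\Upsilon}$ as similar to a matrix built from Kronecker products of $A$ and $B^{\mathrm{T}}$, or relate $\mathit{\Upsilon}^{2}$ to $B^{\mathrm{T}}A\otimes AB^{\mathrm{T}}$. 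Indeed, exactly as in the proof of Theorem \ref{tth1} (equation (\ref{teq104}) and the subsequent computation), one has $\mathit{\Upsilon}^{2}=(B^{\mathrm{T}}\otimes A)P_{(m,n)}(B^{\mathrm{T}}\otimes A)P_{(m,n)}=(B^{\mathrm{T}}\otimes A)(A\otimes B^{\mathrm{T}})=(B^{\mathrm{T}}A)\otimes(AB^{\mathrm{T}})$, using $P_{(m,n)}(B^{\mathrm{T}}\otimes A)P_{(m,n)}=A\otimes B^{\mathrm{T}}$.

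From here I would argue as follows. The eigenvalues of $\mathit{\Upsilon}^{2}$ are exactly the products $\mu\nu$ with $\mu\in\lambda\{B^{\mathrm{T}}A\}$ and $\nu\in\lambda\{AB^{\mathrm{T}}\}$. Since $B^{\mathrm{T}}A$ and $AB^{\mathrm{T}}$ have the same nonzero eigenvalues (a standard fact about $XY$ and $YX$), writing $\lambda\{AB^{\mathrm{T}}\}=\{\eta_1,\dots,\eta_m\}$ one gets, assuming WLOG $n\geq m$, that $\lambda\{B^{\mathrm{T}}A\}=\{\eta_1,\dots,\eta_m,0,\dots,0\}$, hence $\lambda\{\mathit{\Upsilon}^{2}\}=\{\eta_i\eta_j : 1\le i,j\le m\}\cup\{0,\dots,0\}$, exactly as used in the proof of Theorem \ref{tth1}. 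Therefore $\rho(\mathit{\Upsilon}^{2})=\max_{i,j}|\eta_i\eta_j|=(\max_i|\eta_i|)^{2}=\rho(AB^{\mathrm{T}})^{2}=\rho(B^{\mathrm{T}}A)^{2}$, where the last equality again uses that $AB^{\mathrm{T}}$ and $B^{\mathrm{T}}A$ have the same spectral radius (their nonzero spectra agree, and $\rho$ is a max of moduli). Finally, for any square matrix $\rho(\mathit{\Upsilon}^{2})=\rho(\mathit{\Upsilon})^{2}$, since the eigenvalues of $\mathit{\Upsilon}^{2}$ are the squares of those of $\mathit{\Upsilon}$. Combining, $\rho(\mathit{\Upsilon})^{2}=\rho(B^{\mathrm{T}}A)^{2}$, and taking nonnegative square roots yields $\rho(\mathit{\Upsilon})=\rho(B^{\mathrm{T}}A)$.

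Alternatively, and perhaps more cleanly, I would avoid squaring entirely and instead establish that $\mathit{\Upsilon}$ itself and the $(m+n)\times(m+n)$ block matrix viewpoint match: note that $x=\mathrm{vec}(X)$ solves $(I_{mn}-\mathit{\Upsilon})x=\mathrm{vec}(C)$ corresponds to the bilinear action $X\mapsto AX^{\mathrm{T}}B$, whose square is the linear map $X\mapsto AB^{\mathrm{T}}X(A^{\mathrm{T}}B)$; the eigenvalues of the latter are the products $\eta\gamma$ for $\eta,\gamma\in\lambda\{AB^{\mathrm{T}}\}$ together with zeros, which again pins down $\rho(\mathit{\Upsilon})^{2}$. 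Either route reduces the claim to the elementary identity $\lambda\{\mathit{\Upsilon}^{2}\}=\{\lambda^2:\lambda\in\lambda\{\mathit{\Upsilon}\}\}$ plus the $XY$--$YX$ spectrum fact.

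I expect the main (though modest) obstacle to be bookkeeping about zero versus nonzero eigenvalues: $B^{\mathrm{T}}A$ and $AB^{\mathrm{T}}$ need not have the same size, so one must be careful that padding with zeros does not affect $\rho$, and that taking square roots of $\rho(\mathit{\Upsilon})^{2}=\rho(B^{\mathrm{T}}A)^{2}$ is legitimate (both sides are nonnegative reals, so it is). No genuine difficulty arises; the Kronecker identity from Lemma \ref{tlm0} does all the real work and the rest is the standard spectral arithmetic already rehearsed in the proof of Theorem \ref{tth1}.
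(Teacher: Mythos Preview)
Your proposal is correct and follows essentially the same route as the paper: compute $\mathit{\Upsilon}^{2}=(B^{\mathrm{T}}A)\otimes(AB^{\mathrm{T}})$ via the permutation identity (\ref{teq104}), deduce $\rho(\mathit{\Upsilon})^{2}=\rho(B^{\mathrm{T}}A)\rho(AB^{\mathrm{T}})=\rho(B^{\mathrm{T}}A)^{2}$, and take square roots. The paper's version is slightly terser, invoking $\rho(X\otimes Y)=\rho(X)\rho(Y)$ directly rather than enumerating the eigenvalues $\eta_{i}\eta_{j}$, but the argument is the same.
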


\begin{proof}
By definition of spectral radius and using (\ref{teq104}), we have%
\begin{align*}
\rho^{2}\left(  \mathit{\Upsilon}\right)   &  =\rho\left(  \mathit{\Upsilon
}^{2}\right) \\
&  =\rho\left(  \left(  \left(  B^{\mathrm{T}}\otimes A\right)  P_{\left(
m,n\right)  }\right)  \left(  \left(  B^{\mathrm{T}}\otimes A\right)
P_{\left(  m,n\right)  }\right)  \right) \\
&  =\rho\left(  \left(  B^{\mathrm{T}}\otimes A\right)  \left(  P_{\left(
m,n\right)  }\left(  B^{\mathrm{T}}\otimes A\right)  P_{\left(  m,n\right)
}\right)  \right) \\
&  =\rho\left(  \left(  B^{\mathrm{T}}\otimes A\right)  \left(  A\otimes
B^{\mathrm{T}}\right)  \right) \\
&  =\rho\left(  B^{\mathrm{T}}A\otimes AB^{\mathrm{T}}\right) \\
&  =\rho\left(  B^{\mathrm{T}}A\right)  \rho\left(  AB^{\mathrm{T}}\right) \\
&  =\rho^{2}\left(  B^{\mathrm{T}}A\right)  .
\end{align*}
The proof is completed.
\end{proof}

We then can prove the following result.

\begin{theorem}
\label{tth5}Assume that equation (\ref{teq}) has a unique solution $X^{\star}%
$. Then the Smith iteration (\ref{teq21}) converges to $X^{\star}$ for
arbitrary initial condition $X_{0}$ if and only if%
\begin{equation}
\rho\left(  B^{\mathrm{T}}A\right)  <1. \label{teqr}%
\end{equation}
Moreover, the asymptotic exponential convergence rate is $-\ln\left(
\rho\left(  B^{\mathrm{T}}A\right)  \right)  .$
\end{theorem}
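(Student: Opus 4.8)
The plan is to reduce the iteration for $X=AX^{\mathrm{T}}B+C$ to the standard Smith iteration analyzed in the earlier sections. First I would take $\mathrm{vec}(\cdot)$ of the iteration (\ref{teq21}): writing $x_k=\mathrm{vec}(X_k)$, Lemma \ref{tlm0} and (\ref{axb1}) give $x_{k+1}=(B^{\mathrm{T}}\otimes A)P_{(m,n)}x_k+\mathrm{vec}(C)=\mathit{\Upsilon}x_k+\mathrm{vec}(C)$, where $\mathit{\Upsilon}=(B^{\mathrm{T}}\otimes A)P_{(m,n)}$ is exactly the matrix appearing in (\ref{teq102}). So the iteration (\ref{teq21}) is, in vectorized form, precisely a linear iteration of the type (\ref{teqit}) whose fixed point equation is (\ref{teq102}), i.e.\ the vectorized form of (\ref{teq}).

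Next, since we are assuming (\ref{teq}) has a unique solution $X^{\star}$, the fixed point of the vectorized iteration is unique, namely $x^{\star}=\mathrm{vec}(X^{\star})$, and $1\notin\lambda\{\mathit{\Upsilon}\}$. For a linear iteration $x_{k+1}=Mx_k+g$ with a fixed point $x^{\star}$, one has $x_{k+1}-x^{\star}=M(x_k-x^{\star})$, so $x_k-x^{\star}=M^k(x_0-x^{\star})$, and this converges to $0$ for every $x_0$ if and only if $\rho(M)<1$; moreover the asymptotic exponential convergence rate is $-\ln\rho(M)$ in the sense of the Definition recalled in the excerpt. Applying this with $M=\mathit{\Upsilon}$, the iteration converges for all $X_0$ iff $\rho(\mathit{\Upsilon})<1$. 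Finally I invoke Lemma \ref{tlm1}, which gives $\rho(\mathit{\Upsilon})=\rho(B^{\mathrm{T}}A)$, so the convergence condition is exactly (\ref{teqr}) and the rate is $-\ln(\rho(B^{\mathrm{T}}A))$.

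There is essentially no hard obstacle here: the substance of the argument has already been done in Lemma \ref{tlm1} (the spectral-radius identity $\rho(\mathit{\Upsilon})=\rho(B^{\mathrm{T}}A)$, which is the step I would expect to be the real content, though it is proved before the theorem) and in the standard theory of linear iterations. The only point to be careful about is that convergence \emph{for arbitrary} $X_0$ genuinely forces $\rho(\mathit{\Upsilon})<1$ and not merely $\rho(\mathit{\Upsilon})\le 1$: if $\rho(\mathit{\Upsilon})=1$, picking $x_0-x^{\star}$ to have a nonzero component along an eigenvector (or generalized eigenvector) for an eigenvalue of modulus $1$ makes $M^k(x_0-x^{\star})$ fail to converge to $0$. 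With the uniqueness hypothesis in hand, $1$ itself is not an eigenvalue, but other unimodular eigenvalues are still possible, so this direction must be argued rather than waved through. Once that is noted, the proof is just a three-line chain: vectorize, apply the linear-iteration convergence criterion, substitute $\rho(\mathit{\Upsilon})=\rho(B^{\mathrm{T}}A)$.
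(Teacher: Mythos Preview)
Your proposal is correct and is essentially the same argument as the paper's: both vectorize the iteration to $x_{k+1}=\mathit{\Upsilon}x_k+\mathrm{vec}(C)$, invoke the standard linear-iteration convergence criterion $\rho(\mathit{\Upsilon})<1$, and then apply Lemma~\ref{tlm1} to identify $\rho(\mathit{\Upsilon})=\rho(B^{\mathrm{T}}A)$. Your write-up is in fact slightly more careful than the paper's in spelling out why convergence for \emph{arbitrary} $X_0$ forces a strict inequality $\rho(\mathit{\Upsilon})<1$ rather than merely $\le 1$.
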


\begin{proof}
Taking \textrm{vec}$\left(  \cdot\right)  $ on both sides of (\ref{teq21})
gives%
\[
\mathrm{vec}\left(  X_{k+1}\right)  =\mathit{\Upsilon}\mathrm{vec}\left(
X_{k}\right)  +\mathrm{vec}\left(  C\right)  ,
\]
which converges to a finite vector $\mathrm{vec}\left(  X_{\infty}\right)  $
independent of the initial condition if and only if $\rho\left(
\mathit{\Upsilon}\right)  <1.$ This is equivalent to (\ref{teqr}) in view of
Lemma \ref{tlm1}. Moreover, we have%
\[
X_{\infty}=AX_{\infty}^{\mathrm{T}}B+C.
\]
The proof is completed by noting the fact that equation (\ref{teq}) has only a
unique solution under the condition (\ref{teqr}).
\end{proof}

Notice that equation (\ref{teq1}) is a standard Stein equation whose
associated Smith iteration is%
\begin{equation}
X_{k+1}=AB^{\mathrm{T}}X_{k}A^{\mathrm{T}}B+AC^{\mathrm{T}}B+C,\text{
\ \ }\forall X_{0}\in\mathbf{C}^{m\times n}. \label{teq22}%
\end{equation}
It is trivial to show that this iteration converges to a constant matrix
independent of the initial condition $X_{0}$ if and only if (\ref{teqr}) is
satisfied. As (\ref{teqr}) implies (\ref{tcond}), we conclude from Theorem
\ref{tth1} that the iteration (\ref{teq22}) also converges to the unique
solution $X^{\star}$ to equation (\ref{teq}).

\begin{remark}
\label{trm1}In view of (\ref{teq50}), it is easy to see that iteration
(\ref{teq22}) can be regarded as Smith $\left(  2\right)  $ iteration
associated with the Smith iteration (\ref{teq21}). Therefore, by using a
similar technique as that used in (\ref{teq50}), for arbitrary integer $l>1,$
we can construct Smith $\left(  l\right)  $ iteration for equation
(\ref{teq}). The details are omitted for brevity.
\end{remark}

\begin{remark}
\label{trm2}Under condition (\ref{teqr}), it follows from Theorem \ref{tth1}
that the unique solution of equation (\ref{teq}) is%
\[
X^{\star}=\sum\limits_{i=0}^{\infty}\left(  AB^{\mathrm{T}}\right)
^{i}\left(  AC^{\mathrm{T}}B+C\right)  \left(  A^{\mathrm{T}}B\right)  ^{i}.
\]
Therefore, in view of Lemma \ref{tlm5}, for any integer $r\geq1$, we can
construct the following $r$-Smith iteration%
\[
X_{k+1}=\sum\limits_{i=0}^{r-1}\mathcal{A}_{k}^{i}X_{k}\mathcal{B}_{k}%
^{i},\text{ }\mathcal{A}_{k+1}=\mathcal{A}_{k}^{r},\text{ }\mathcal{B}%
_{k+1}=\mathcal{B}_{k}^{r},\text{ }k\geq0,
\]
with $X_{0}=AC^{\mathrm{T}}B+C,$ $\mathcal{A}_{0}=AB^{\mathrm{T}}\ $and
$\mathcal{B}_{0}=A^{\mathrm{T}}B.$ Consequently, we have $\lim_{k\rightarrow
\infty}X_{k}=X^{\star}.$ Actually, we have%
\[
X_{k}=\sum\limits_{i=0}^{r^{k}-1}\left(  AB^{\mathrm{T}}\right)  ^{i}\left(
AC^{\mathrm{T}}B+C\right)  \left(  A^{\mathrm{T}}B\right)  ^{i}.
\]

\end{remark}

\section{\label{tsec3}Revisit of Equation $X=A\overline{X}B+C$}

To study matrix equation $X=AX^{\mathrm{H}}B+C,$ we should first revisit the
following matrix equation
\begin{equation}
X=A\overline{X}B+C, \label{teq7}%
\end{equation}
where $A\in\mathbf{C}^{m\times m},B\in\mathbf{C}^{n\times n}$ and
$C\in\mathbf{C}^{m\times n}$ are given, and $X\in\mathbf{C}^{m\times n}$ is to
be determined. This equation was first studied in \cite{jw03laa}. To introduce
the results obtained in that paper, we need some preliminaries. For a complex
matrix $A=A_{1}+\mathrm{i}A_{2}$ where $\mathrm{i}=\sqrt{-1},$ the associated
real representation is%
\begin{equation}
A_{\sigma}=\left[
\begin{array}
[c]{cc}%
A_{1} & A_{2}\\
A_{2} & -A_{1}%
\end{array}
\right]  . \label{teqra}%
\end{equation}
The real representation has many good properties, one of which is the following.

\begin{lemma}
\label{tlm9}(Proposition 3.2 in \cite{jw03laa}) Let $A\in\mathbf{C}^{m\times
m}.$ If $\alpha\in\lambda\left\{  A_{\sigma}\right\}  $, then $\left\{
\pm\alpha,\pm\overline{\alpha}\right\}  \subset\lambda\left\{  A_{\sigma
}\right\}  .$
\end{lemma}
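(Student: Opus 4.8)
The plan is to work directly with the block structure of the real representation $A_\sigma$ and exhibit an explicit similarity transformation together with a transpose-type manipulation that relates the eigenvalues in the required way. Write $A=A_1+\mathrm{i}A_2$ with $A_1,A_2$ real, and suppose $A_\sigma v=\alpha v$ for some nonzero $v\in\mathbf{C}^{2m}$. Partitioning $v=\left[\begin{array}{c} v_1\\ v_2\end{array}\right]$ into two blocks of size $m$, the eigenvalue equation becomes the coupled system $A_1v_1+A_2v_2=\alpha v_1$ and $A_2v_1-A_1v_2=\alpha v_2$.

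First I would establish that $-\alpha\in\lambda\{A_\sigma\}$. The clean way is to find a matrix $T$ with $T^{-1}A_\sigma T=-A_\sigma$; the natural candidate is $T=\left[\begin{array}{cc} 0 & I_m\\ I_m & 0\end{array}\right]$, and a one-line block computation shows $TA_\sigma T=-A_\sigma$ (since $T=T^{-1}$). Hence $A_\sigma$ and $-A_\sigma$ are similar, so their spectra coincide, giving $-\alpha\in\lambda\{A_\sigma\}$ whenever $\alpha\in\lambda\{A_\sigma\}$. Next I would establish that $\overline{\alpha}\in\lambda\{A_\sigma\}$. Since $A_1,A_2$ are real, $A_\sigma$ is a real matrix, so its characteristic polynomial has real coefficients; therefore its non-real eigenvalues come in conjugate pairs, and in any case $\overline{\alpha}\in\lambda\{A_\sigma\}$ (if $\alpha$ is real this is trivial). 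Combining the two facts, $\{\pm\alpha,\pm\overline{\alpha}\}\subset\lambda\{A_\sigma\}$, which is exactly the claim.

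The only subtlety — and the step I would be most careful about — is verifying the identity $TA_\sigma T=-A_\sigma$ with the correct sign, since the off-diagonal block of $A_\sigma$ is $+A_2$ in both rows while the diagonal blocks are $A_1$ and $-A_1$; a direct $2\times2$ block multiplication confirms that conjugation by the swap $T$ sends $\left[\begin{array}{cc} A_1 & A_2\\ A_2 & -A_1\end{array}\right]$ to $\left[\begin{array}{cc} -A_1 & -A_2\\ -A_2 & A_1\end{array}\right]=-A_\sigma$. Everything else is routine: similarity preserves the spectrum, and realness of $A_\sigma$ forces conjugate-closedness of the spectrum. Alternatively, one could avoid invoking realness and instead produce a second explicit similarity (or use the eigenvector system above, replacing $v$ by a suitable conjugated/reshuffled vector) to get $\overline{\alpha}$ directly; I would mention this as a remark but proceed with the characteristic-polynomial argument as the shortest route.
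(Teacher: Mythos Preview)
Your overall strategy is sound --- exhibit a similarity that negates $A_\sigma$ to get $-\alpha$, and use realness of $A_\sigma$ to get $\overline{\alpha}$ --- but the specific similarity you wrote down is wrong. With $T=\left[\begin{smallmatrix}0 & I_m\\ I_m & 0\end{smallmatrix}\right]$ one actually obtains
\[
T A_\sigma T=\begin{bmatrix}-A_1 & A_2\\ A_2 & A_1\end{bmatrix},
\]
which is \emph{not} $-A_\sigma=\left[\begin{smallmatrix}-A_1 & -A_2\\ -A_2 & A_1\end{smallmatrix}\right]$ unless $A_2=0$. So the claimed identity $TA_\sigma T=-A_\sigma$ fails, and your ``one-line block computation'' that allegedly confirmed it is in error; precisely the step you flagged as the one to be most careful about went wrong.

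The fix is to replace the swap matrix by $Q_m=\left[\begin{smallmatrix}0 & I_m\\ -I_m & 0\end{smallmatrix}\right]$ (the matrix used elsewhere in the paper). Since $Q_m^{-1}=-Q_m$ and a direct block computation gives $Q_m A_\sigma Q_m=A_\sigma$, one obtains $Q_m^{-1}A_\sigma Q_m=-A_\sigma$, and then your argument goes through unchanged. Your treatment of $\overline{\alpha}$ via realness of $A_\sigma$ is correct as stated. Note also that the present paper does not supply its own proof of this lemma --- it is cited from Jiang and Wei --- so there is no ``paper's proof'' to compare against; but with the corrected similarity your argument is a valid self-contained proof.
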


Based on the real representation, the following elegant result was established
in \cite{jw03laa} regarding solutions of equation (\ref{teq7}).

\begin{proposition}
\label{tlm4}(\cite{jw03laa}) The equation (\ref{teq7}) is solvable if and only
if the following equation%
\begin{equation}
Y=A_{\sigma}YB_{\sigma}+C_{\sigma}, \label{teq14}%
\end{equation}
is solvable with unknown $Y$. Moreover, let $Y$ be any real\footnote{Though it
is not mentioned in \cite{jw03laa} that $Y$ is real solution, it indeed should
be so.} solution of (\ref{teq14}), then the solution of equation (\ref{teq7})
is
\[
X=\frac{1}{4}\left[
\begin{array}
[c]{cc}%
I_{m} & \mathrm{i}I_{m}%
\end{array}
\right]  \left(  Y+Q_{m}YQ_{n}\right)  \left[
\begin{array}
[c]{c}%
I_{n}\\
\mathrm{i}I_{n}%
\end{array}
\right]  ,
\]
where%
\begin{equation}
Q_{s}=\left[
\begin{array}
[c]{cc}%
0 & I_{s}\\
-I_{s} & 0
\end{array}
\right]  . \label{tqs}%
\end{equation}

\end{proposition}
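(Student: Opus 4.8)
The statement to prove is Proposition \ref{tlm4} (attributed to \cite{jw03laa}), relating solvability of $X = A\overline{X}B + C$ to solvability of the real representation Stein equation $Y = A_\sigma Y B_\sigma + C_\sigma$, together with an explicit recovery formula for $X$. My plan is to work directly from the definition of the real representation. The key observation is that for complex matrices of compatible sizes, $(AX B)_\sigma$ is \emph{not} simply $A_\sigma X_\sigma B_\sigma$; instead, conjugation sneaks in. So the first step is to establish the right algebraic identity: writing $A = A_1 + \mathrm{i}A_2$, $X = X_1 + \mathrm{i}X_2$, and computing $A\overline{X}$ and then $(A\overline{X}B)_\sigma$ in block form. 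I would verify the identity
\[
(A\overline{X}B)_\sigma = A_\sigma X_\sigma B_\sigma,
\]
which is the whole point of the real representation being designed around the conjugate $\overline{X}$ rather than $X$. Consequently, applying $(\cdot)_\sigma$ to both sides of (\ref{teq7}) shows that if $X$ solves (\ref{teq7}), then $Y = X_\sigma$ is a \emph{real} solution of (\ref{teq14}); this gives the ``only if'' direction and motivates the converse.

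**The converse direction and the recovery formula.** For the ``if'' direction, suppose $Y$ is a real solution of (\ref{teq14}). The issue is that a generic real $Y$ of size $2m \times 2n$ need not be of the form $X_\sigma$. The remedy is the symmetrization operator: using $Q_m$ and $Q_n$ from (\ref{tqs}), one checks that $Q_m A_\sigma = A_\sigma Q_m$ — more precisely that $A_\sigma$ commutes appropriately with the $Q$-conjugation (this is where Lemma \ref{tlm9}'s underlying structure comes from, namely that $Q_m A_\sigma Q_m^{-1}$ relates $A_\sigma$ to itself). Then $\widetilde{Y} := \tfrac12(Y + Q_m Y Q_n^{-1})$ (note $Q_n^{-1} = -Q_n = Q_n^{\mathrm T}$) is again a real solution of (\ref{teq14}), and now $\widetilde Y$ \emph{does} have the block structure of a real representation, i.e. $\widetilde Y = Z_\sigma$ for some complex $Z$. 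Unwinding $Z$ from $\widetilde Y$ via the extraction $Z = \tfrac14[I_m\ \ \mathrm{i}I_m]\, \widetilde Y\, [I_n;\ \mathrm{i}I_n]$ (here one must be careful about the minus sign in the $(2,2)$ block of $A_\sigma$, so I would recompute how the top-left and bottom-right blocks of $\widetilde Y$ recombine), and then checking that $Z_\sigma = \widetilde Y$ satisfies (\ref{teq14}) forces $Z$ to satisfy (\ref{teq7}) by the injectivity of $(\cdot)_\sigma$ combined with the identity from the first step. This simultaneously produces the claimed formula
\[
X = \frac14 \left[ \begin{array}{cc} I_m & \mathrm{i}I_m \end{array} \right] (Y + Q_m Y Q_n) \left[ \begin{array}{c} I_n \\ \mathrm{i}I_n \end{array} \right],
\]
after absorbing the factor from the symmetrization and noting $Q_n^{-1}$ versus $Q_n$ differ only by sign, which the $\tfrac14$ vs $\tfrac12$ bookkeeping accounts for.

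**Main obstacle.** I expect the real bottleneck to be the sign/conjugation bookkeeping in two places: (i) proving $(A\overline{X}B)_\sigma = A_\sigma X_\sigma B_\sigma$ cleanly — this requires expanding $A\overline X B = (A_1 + \mathrm{i}A_2)(X_1 - \mathrm{i}X_2)(B_1 + \mathrm{i}B_2)$, collecting real and imaginary parts, and matching against the $2\times 2$ block product, where the $-A_1$ in the lower-right corner of $A_\sigma$ is exactly what makes the conjugate come out right; and (ii) verifying that the symmetrized matrix $\widetilde Y$ is genuinely in the image of $(\cdot)_\sigma$ and that the extraction map inverts it, keeping track of whether $Q_n$ or $Q_n^{-1} = -Q_n$ appears. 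Once these two verifications are done, everything else is formal: solvability transfers because $(\cdot)_\sigma$ is injective and linear, the particular-solution-to-general-solution passage is routine, and the recovery formula drops out. I would present step (i) as a short displayed computation and step (ii) as the construction of the inverse map, then assemble the two implications.
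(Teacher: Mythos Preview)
Your approach is correct and essentially complete. Note, however, that the paper does \emph{not} actually prove Proposition~\ref{tlm4}: it is quoted from \cite{jw03laa} without proof. What the paper \emph{does} prove by the very method you describe is the parallel result Theorem~\ref{tth4} for the equation $X=AX^{\mathrm H}B+C$; your plan transposes cleanly to that argument (and is in fact slightly simpler, since no transpose of $Y$ intervenes). Specifically: (i) the identity you want, $(A\overline{X}B)_\sigma=A_\sigma X_\sigma B_\sigma$, follows from the computation behind Lemma~\ref{tlm2} (indeed $A_\sigma X_\sigma=(A\overline{X})_\phi$ and $M_\phi N_\sigma=(MN)_\sigma$); and (ii) the key symmetry is exactly the relation $Q_m A_\sigma Q_m=A_\sigma$, $Q_n B_\sigma Q_n=B_\sigma$, $Q_m C_\sigma Q_n=C_\sigma$ used in the paper's proof of Theorem~\ref{tth4}, so that $Q_mYQ_n$ (not $Q_mYQ_n^{-1}$) is again a solution of (\ref{teq14}).

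One small correction to your bookkeeping: you should symmetrize with $Q_mYQ_n$, \emph{not} $Q_mYQ_n^{-1}$. Since $Q_n^{-1}=-Q_n$, the combination $\tfrac12(Y+Q_mYQ_n^{-1})=\tfrac12(Y-Q_mYQ_n)$ solves only the homogeneous equation and does not land in the image of $(\cdot)_\sigma$. With the correct sign, $\mathcal Y=\tfrac12(Y+Q_mYQ_n)$ has the block pattern $\left[\begin{smallmatrix}X_1&X_2\\X_2&-X_1\end{smallmatrix}\right]$, and a direct computation gives $\tfrac12[\,I_m\ \ \mathrm iI_m\,]\mathcal Y\left[\begin{smallmatrix}I_n\\ \mathrm iI_n\end{smallmatrix}\right]=X_1+\mathrm iX_2$, which produces the stated factor $\tfrac14$ after substituting $\mathcal Y=\tfrac12(Y+Q_mYQ_n)$. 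With that fix, your proof is the same as the one in \cite{jw03laa} and the one the paper gives for Theorem~\ref{tth4}.
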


To use this result, we need to solve linear equation (\ref{teq14}) whose
dimension is twice of the original matrix equation (\ref{teq7}). In this
section, by using the idea used in Section \ref{tsec2}, we will give an
alternative criterion for the solvability and computation methods for equation
(\ref{teq7}).

\subsection{Some Useful Results}

In this subsection, we give some useful results that will be used later.

\begin{lemma}
\label{tlm2}Let $A=A_{1}+\mathrm{i}A_{2}\in\mathbf{C}^{m\times n},$ where
$A_{1},A_{2}\in\mathbf{R}^{m\times n},$ be any given complex matrices. Then%
\begin{equation}
A_{\phi}\triangleq\left[
\begin{array}
[c]{cc}%
A_{1} & -A_{2}\\
A_{2} & A_{1}%
\end{array}
\right]  =Z_{m}\left[
\begin{array}
[c]{cc}%
\overline{A} & 0\\
0 & A
\end{array}
\right]  Z_{n}^{\mathrm{H}}, \label{teq65}%
\end{equation}
where $Z_{n}$ is a unitary matrix defined as%
\[
Z_{n}=\frac{\sqrt{2}}{2}\left[
\begin{array}
[c]{cc}%
\mathrm{i}I_{n} & I_{n}\\
I_{n} & \mathrm{i}I_{n}%
\end{array}
\right]  .
\]
Moreover, for two arbitrary matrices $U\in\mathbf{C}^{m\times n}$ and
$V\in\mathbf{C}^{n\times q},$ there holds%
\begin{equation}
U_{\sigma}V_{\sigma}=\left(  U\overline{V}\right)  _{\phi}. \label{teq60}%
\end{equation}

\end{lemma}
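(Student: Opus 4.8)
The plan is to prove both identities by direct block‑matrix computation, deriving the second from the first. As a preliminary step I would record that $Z_n$ is unitary: since $Z_n$ is (block‑)symmetric, $Z_n^{\mathrm H}$ is obtained from $Z_n$ by replacing $\mathrm i$ with $-\mathrm i$, and a one‑line $2\times2$ block multiplication (the diagonal entries give $\tfrac12(\mathrm i(-\mathrm i)+1)=1$, the off‑diagonal ones give $0$) yields $Z_nZ_n^{\mathrm H}=Z_n^{\mathrm H}Z_n=I_{2n}$. This fact is needed when $(\ref{teq65})$ is invoked afterwards.

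For $(\ref{teq65})$ I would simply expand the $2\times2$ block product $Z_m\,\diag(\overline A,A)\,Z_n^{\mathrm H}$. Carrying out the two multiplications and collecting terms, every block becomes a combination of $\overline A$ and $A$; using $A+\overline A=2A_1$, $A-\overline A=2\mathrm iA_2$ and $\mathrm i^2=-1$ these combinations collapse to $A_1$ on the diagonal and to $\mp A_2$ off the diagonal, i.e. precisely $A_\phi$. This is routine; the only thing to watch is the bookkeeping of the conjugates and of the signs generated by the $\mathrm i$ factors.

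For $(\ref{teq60})$ I would avoid a second round of block expansion by leveraging $(\ref{teq65})$ twice. First, $(\ref{teq65})$ together with unitarity of $Z$ and $\overline{AB}=\overline A\,\overline B$ shows that the map $A\mapsto A_\phi$ is multiplicative:
\[
(AB)_\phi=Z_m\,\diag(\overline A,A)\,Z_n^{\mathrm H}Z_n\,\diag(\overline B,B)\,Z_q^{\mathrm H}=Z_m\,\diag(\overline{AB},AB)\,Z_q^{\mathrm H}=A_\phi B_\phi .
\]
Second, comparing the definition $(\ref{teqra})$ of $A_\sigma$ with that of $A_\phi$ one reads off $A_\sigma=A_\phi\,\diag(I_n,-I_n)$, and a direct check gives $\diag(I_n,-I_n)\,B_\phi\,\diag(I_q,-I_q)=(\overline B)_\phi$. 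Combining these,
\[
U_\sigma V_\sigma=U_\phi\,\diag(I_n,-I_n)\,V_\phi\,\diag(I_q,-I_q)=U_\phi\,(\overline V)_\phi=(U\overline V)_\phi ,
\]
which is $(\ref{teq60})$. (As a sanity check, $(\ref{teq60})$ can also be verified head‑on: writing $U=U_1+\mathrm iU_2$, $V=V_1+\mathrm iV_2$, both $U_\sigma V_\sigma$ and $(U\overline V)_\phi$ equal the block matrix with diagonal blocks $U_1V_1+U_2V_2$ and off‑diagonal blocks $\pm(U_1V_2-U_2V_1)$.)

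I do not expect a genuine obstacle here: the whole lemma is a matter of manipulating $2\times2$ block matrices of real blocks. The one mildly non‑obvious point — and the only place where a little foresight saves work — is recognizing the factorization $A_\sigma=A_\phi\,\diag(I,-I)$, which turns $(\ref{teq60})$ into an immediate corollary of $(\ref{teq65})$ rather than a separate computation; beyond that, one only needs to keep the conjugations and the signs coming from $\mathrm i^2=-1$ straight.
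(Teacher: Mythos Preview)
Your proposal is correct. For the first identity $(\ref{teq65})$ you do exactly what the paper does: the paper simply says it ``can be shown by direct manipulation'' and leaves the block computation to the reader, which is precisely your expansion of $Z_m\,\mathrm{diag}(\overline A,A)\,Z_n^{\mathrm H}$.

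For $(\ref{teq60})$ your main argument differs from the paper's. The paper proves it head-on by writing $U=U_1+\mathrm iU_2$, $V=V_1+\mathrm iV_2$, multiplying out the $2\times2$ block product $U_\sigma V_\sigma$, and then recognizing the result as $(U\overline V)_\phi$; this is exactly the computation you relegate to a parenthetical ``sanity check.'' Your primary route instead exploits $(\ref{teq65})$: you first deduce multiplicativity $(AB)_\phi=A_\phi B_\phi$ from the diagonalization and unitarity of $Z$, then use the factorization $A_\sigma=A_\phi\,\mathrm{diag}(I,-I)$ together with $\mathrm{diag}(I,-I)\,B_\phi\,\mathrm{diag}(I,-I)=(\overline B)_\phi$ to reduce $(\ref{teq60})$ to that multiplicativity. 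This is a genuinely more structural argument; it explains \emph{why} the identity holds (conjugation by $\mathrm{diag}(I,-I)$ implements complex conjugation at the level of $\phi$, and $\phi$ is a ring homomorphism) and would generalize more readily, whereas the paper's direct expansion is shorter to write down and entirely self-contained. Both are valid, and since you also record the direct computation, nothing is missing.
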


\begin{proof}
Equation (\ref{teq65}) can be shown by direct manipulation. We next prove
(\ref{teq60}). By letting $U=U_{1}+\mathrm{i}U_{2}$ and $V=V_{1}%
+\mathrm{i}V_{2},$ where $U_{i}$ and $V_{i}$ are real matrices, we have%
\begin{align*}
U_{\sigma}V_{\sigma}  &  =\left[
\begin{array}
[c]{cc}%
U_{1} & U_{2}\\
U_{2} & -U_{1}%
\end{array}
\right]  \left[
\begin{array}
[c]{cc}%
V_{1} & V_{2}\\
V_{2} & -V_{1}%
\end{array}
\right] \\
&  =\left[
\begin{array}
[c]{cc}%
U_{1}V_{1}+U_{2}V_{2} & U_{1}V_{2}-U_{2}V_{1}\\
U_{2}V_{1}-U_{1}V_{2} & U_{1}V_{1}+U_{2}V_{2}%
\end{array}
\right] \\
&  =\left(  U_{1}V_{1}+U_{2}V_{2}+\mathrm{i}\left(  U_{2}V_{1}-U_{1}%
V_{2}\right)  \right)  _{\phi}\\
&  =\left(  \left(  U_{1}+\mathrm{i}U_{2}\right)  \left(  V_{1}-\mathrm{i}%
V_{2}\right)  \right)  _{\phi}\\
&  =\left(  U\overline{V}\right)  _{\phi},
\end{align*}
which is (\ref{teq60}). The proof is finished.
\end{proof}

Let $A=A_{1}+\mathrm{i}A_{2}\in\mathbf{C}^{m\times n}$ be any given matrix,
where $A_{1}$ and $A_{2}$ are real matrices. Then the linear mapping
$\varphi\left(  A\right)  :\mathbf{C}^{m\times n}\rightarrow\mathbf{R}^{2mn}$
is defined as%
\[
\varphi\left(  A\right)  =\left[
\begin{array}
[c]{c}%
\mathrm{vec}\left(  A_{1}\right) \\
\mathrm{vec}\left(  A_{2}\right)
\end{array}
\right]  .
\]
We notice that mapping $\varphi\left(  \cdot\right)  $ is bijective and denote%
\[
\varphi^{-1}\left(  c\right)  =\mathrm{vec}^{-1}\left(  c_{1}\right)
+\mathrm{vec}^{-1}\left(  c_{2}\right)  \mathrm{i},\text{ }c=\left[
\begin{array}
[c]{c}%
c_{1}\\
c_{2}%
\end{array}
\right]  \in\mathbf{R}^{2n\times1},\text{ }c_{1}\in\mathbf{R}^{n\times1}.
\]
Here we have assumed that $\mathrm{vec}^{-1}\left(  \cdot\right)  $ is defined
associated with $\mathrm{vec}\left(  \cdot\right)  $ in an obvious way. The
following lemma gives further properties of $\varphi\left(  A\right)  .$

\begin{lemma}
\label{tlm12}Let $A\in\mathbf{C}^{p\times m}$ and $B\in\mathbf{C}^{n\times q}$
be given. Then for any $X\in\mathbf{C}^{m\times n},$ there hold%
\begin{align}
\varphi\left(  A\overline{X}B\right)   &  =\left(  B^{\mathrm{T}}\otimes
A\right)  _{\sigma}\varphi\left(  X\right)  ,\label{teq77}\\
\varphi\left(  AXB\right)   &  =\left(  B^{\mathrm{T}}\otimes A\right)
_{\phi}\varphi\left(  X\right)  . \label{teq78}%
\end{align}

\end{lemma}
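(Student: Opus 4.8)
The goal is to prove Lemma \ref{tlm12}: the two identities
\[
\varphi\left(  A\overline{X}B\right)  =\left(  B^{\mathrm{T}}\otimes A\right)
_{\sigma}\varphi\left(  X\right)  ,\qquad\varphi\left(  AXB\right)  =\left(
B^{\mathrm{T}}\otimes A\right)  _{\phi}\varphi\left(  X\right)  .
\]

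The plan is to reduce both identities to the classical vectorization formula (\ref{axb1}), namely $\mathrm{vec}(AXB)=(B^{\mathrm{T}}\otimes A)\mathrm{vec}(X)$, by carefully tracking real and imaginary parts. First I would write everything in terms of real data: $A=A_1+\mathrm{i}A_2$, $B=B_1+\mathrm{i}B_2$, $X=X_1+\mathrm{i}X_2$ with all $X_i,A_i,B_i$ real. For the first identity, I would expand the product $A\overline{X}B=(A_1+\mathrm{i}A_2)(X_1-\mathrm{i}X_2)(B_1+\mathrm{i}B_2)$ and collect real and imaginary parts. Applying $\mathrm{vec}$ to each real matrix that appears and repeatedly invoking (\ref{axb1}) converts every term of the form $A_i X_j B_k$ into $(B_k^{\mathrm{T}}\otimes A_i)\mathrm{vec}(X_j)$. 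Stacking the real part on top of the imaginary part then gives a $2mn\times 2mn$ block matrix acting on $\varphi(X)=[\mathrm{vec}(X_1)^{\mathrm{T}},\mathrm{vec}(X_2)^{\mathrm{T}}]^{\mathrm{T}}$, and the key bookkeeping step is to recognize that this block matrix is exactly $(B^{\mathrm{T}}\otimes A)_\sigma$, i.e.\ it has the form $\left[\begin{smallmatrix} M_1 & M_2\\ M_2 & -M_1\end{smallmatrix}\right]$ where $M_1+\mathrm{i}M_2 = B^{\mathrm{T}}\otimes A$. Here one uses that $(B^{\mathrm{T}}\otimes A)$ has real part $B_1^{\mathrm{T}}\otimes A_1 - B_2^{\mathrm{T}}\otimes A_2$ and imaginary part $B_1^{\mathrm{T}}\otimes A_2 + B_2^{\mathrm{T}}\otimes A_1$ (since $(B^{\mathrm{T}})$ has real part $B_1^{\mathrm{T}}$ and imaginary part $B_2^{\mathrm{T}}$), and matches these against the coefficients produced by expanding $A\overline{X}B$. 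The conjugation on $X$ is precisely what flips the sign of the $X_2$-contributions in the right way to produce the $\sigma$-pattern rather than the $\phi$-pattern.

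For the second identity, I would proceed identically but now expand $AXB=(A_1+\mathrm{i}A_2)(X_1+\mathrm{i}X_2)(B_1+\mathrm{i}B_2)$ without the conjugation. Collecting real and imaginary parts and vectorizing via (\ref{axb1}) again yields a $2\times 2$ block operator on $\varphi(X)$, which this time has the form $\left[\begin{smallmatrix} M_1 & -M_2\\ M_2 & M_1\end{smallmatrix}\right]$ with $M_1+\mathrm{i}M_2=B^{\mathrm{T}}\otimes A$; comparing with the definition (\ref{teq65}) of $(\cdot)_\phi$, this is exactly $(B^{\mathrm{T}}\otimes A)_\phi$. Alternatively — and this is probably the cleaner route — I would derive (\ref{teq78}) from (\ref{teq77}) using the relation (\ref{teq60}) between $\sigma$ and $\phi$ representations: writing $AXB = A\overline{(\overline{X})}B$ and applying the first identity to $\overline{X}$ in place of $X$ gives $\varphi(AXB)=(B^{\mathrm{T}}\otimes A)_\sigma \varphi(\overline{X})$, and then observing that $\varphi(\overline{X})$ is obtained from $\varphi(X)$ by negating the second block, which is multiplication by $\left[\begin{smallmatrix} I & 0\\ 0 & -I\end{smallmatrix}\right]$; multiplying $(B^{\mathrm{T}}\otimes A)_\sigma$ by this on the right converts the $\sigma$-pattern into the $\phi$-pattern. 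Either way the second identity is essentially a corollary of the first.

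The main obstacle is purely organizational rather than conceptual: the expansion of a triple product of sums of real and imaginary parts produces eight terms, and one must carefully sort which of these land in the real part and which in the imaginary part, keep track of the signs introduced by $\mathrm{i}^2=-1$ and by the conjugation on $X$, and then correctly identify the resulting $2\times2$ block array with the sign pattern in the definition of $A_\sigma$ (resp.\ $A_\phi$). Since the statement only asserts the two displayed formulas, I would present the argument concisely: expand, vectorize each block with (\ref{axb1}), assemble the block matrix, and match it against the definitions of $(\cdot)_\sigma$ and $(\cdot)_\phi$ — remarking that the computation is a direct verification and omitting the fully written-out term-by-term bookkeeping for brevity.
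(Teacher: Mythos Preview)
Your proposal is correct and follows essentially the same approach as the paper: the paper proves (\ref{teq77}) by exactly the real/imaginary expansion and block-matrix identification you describe, and then derives (\ref{teq78}) from (\ref{teq77}) via the ``cleaner route'' you outline, namely applying (\ref{teq77}) to $\overline{X}$, writing $\varphi(\overline{X})=\left[\begin{smallmatrix} I & 0\\ 0 & -I\end{smallmatrix}\right]\varphi(X)=(I_{mn})_\sigma\varphi(X)$, and invoking (\ref{teq60}) to convert $(B^{\mathrm{T}}\otimes A)_\sigma(I_{mn})_\sigma$ into $(B^{\mathrm{T}}\otimes A)_\phi$.
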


\begin{proof}
Let $A=A_{1}+\mathrm{i}A_{2},B=B_{1}+\mathrm{i}B_{2},X=X_{1}+\mathrm{i}X_{2}$
where $A_{i},B_{i}$ and $X_{i},i=1,2$ are real matrices. Then $Y=A\overline
{X}B$ where $Y=Y_{1}+\mathrm{i}Y_{2}\in\mathbf{C}^{m\times n}$ with $Y_{1}$
and $Y_{2}$ being real matrices, is equivalent to
\[
\left\{
\begin{array}
[c]{l}%
Y_{1}=\left(  A_{1}X_{1}+A_{2}X_{2}\right)  B_{1}-\left(  A_{2}X_{1}%
-A_{1}X_{2}\right)  B_{2},\\
Y_{2}=\left(  A_{2}X_{1}-A_{1}X_{2}\right)  B_{1}+\left(  A_{1}X_{1}%
+A_{2}X_{2}\right)  B_{2}.
\end{array}
\right.
\]
Taking \textrm{vec}$\left(  \cdot\right)  $ on both sides of the above two
equations, we get $\varphi\left(  Y\right)  =T\left(  A,B\right)
\varphi\left(  X\right)  ,$ where
\[
T\left(  A,B\right)  =\left[
\begin{array}
[c]{cc}%
B_{1}^{\mathrm{T}}\otimes A_{1}-B_{2}^{\mathrm{T}}\otimes A_{2} &
B_{1}^{\mathrm{T}}\otimes A_{2}+B_{2}^{\mathrm{T}}\otimes A_{1}\\
B_{1}^{\mathrm{T}}\otimes A_{2}+B_{2}^{\mathrm{T}}\otimes A_{1} &
B_{2}^{\mathrm{T}}\otimes A_{2}-B_{1}^{\mathrm{T}}\otimes A_{1}%
\end{array}
\right]  .
\]
In view of the real representation in (\ref{teqra}), we can see that%
\begin{align}
T\left(  A,B\right)   &  =\left(  \left(  B_{1}^{\mathrm{T}}\otimes
A_{1}-B_{2}^{\mathrm{T}}\otimes A_{2}\right)  +\mathrm{i}\left(
B_{1}^{\mathrm{T}}\otimes A_{2}+B_{2}^{\mathrm{T}}\otimes A_{1}\right)
\right)  _{\sigma}\nonumber\\
&  =\left(  \left(  B_{1}^{\mathrm{T}}+\mathrm{i}B_{2}^{\mathrm{T}}\right)
\otimes\left(  A_{1}+\mathrm{i}A_{2}\right)  \right)  _{\sigma}\nonumber\\
&  =\left(  B^{\mathrm{T}}\otimes A\right)  _{\sigma}, \label{teq45}%
\end{align}
which is (\ref{teq77}).

Next we show (\ref{teq78}). By using (\ref{teq60}), (\ref{teq77}) and by
definition of $\varphi\left(  \cdot\right)  $, we have
\begin{align*}
\varphi\left(  AXB\right)   &  =\left(  B^{\mathrm{T}}\otimes A\right)
_{\sigma}\varphi\left(  \overline{X}\right) \\
&  =\left(  B^{\mathrm{T}}\otimes A\right)  _{\sigma}\left[
\begin{array}
[c]{cc}%
I_{mn} & 0\\
0 & -I_{mn}%
\end{array}
\right]  \varphi\left(  X\right) \\
&  =\left(  B^{\mathrm{T}}\otimes A\right)  _{\sigma}\left(  I_{mn}\right)
_{\sigma}\varphi\left(  X\right) \\
&  =\left(  \left(  B^{\mathrm{T}}\otimes A\right)  \overline{I}_{mn}\right)
_{\phi}\varphi\left(  X\right) \\
&  =\left(  B^{\mathrm{T}}\otimes A\right)  _{\phi}\varphi\left(  X\right)  .
\end{align*}
The proof is thus finished.
\end{proof}

The following lemma regarding properties of real representation is important
in our further development.

\begin{lemma}
\label{tlm11}Let $A=A_{1}+\mathrm{i}A_{2}\in\mathbf{C}^{m\times m}$ where
$A_{1}$ and $A_{2}$ are real matrices. Then%
\begin{align}
\mathrm{rank}\left(  A_{\sigma}\right)   &  =2\mathrm{rank}\left(  A\right)
,\label{teq51}\\
\mathrm{rank}\left(  I_{2m}-A_{\sigma}\right)   &  =m+\mathrm{rank}\left(
I_{m}-A\overline{A}\right)  . \label{teq52}%
\end{align}

\end{lemma}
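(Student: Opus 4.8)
The plan is to reduce both identities to the already-established properties of the real representation $(\cdot)_\sigma$, the companion representation $(\cdot)_\phi$ of Lemma \ref{tlm2}, and the matrix $Q_m$ of (\ref{tqs}). For the first identity, I would apply (\ref{teq60}) with $U=A$ and $V=I_m$ to get $A_\sigma (I_m)_\sigma=(A\overline{I_m})_\phi=A_\phi$; since $(I_m)_\sigma=\mathrm{diag}(I_m,-I_m)$ is nonsingular, this gives $\mathrm{rank}(A_\sigma)=\mathrm{rank}(A_\phi)$, and then (\ref{teq65}) together with the unitarity of $Z_m$ yields $\mathrm{rank}(A_\phi)=\mathrm{rank}(\mathrm{diag}(\overline A,A))=\mathrm{rank}(\overline A)+\mathrm{rank}(A)=2\mathrm{rank}(A)$, which is (\ref{teq51}). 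The same computation via (\ref{teq65}) shows $\mathrm{rank}(M_\phi)=2\mathrm{rank}(M)$ for every square $M\in\mathbf{C}^{m\times m}$, a fact I will reuse below.

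For the second identity, the first step is the algebraic identity
\[
(I_{2m}-A_\sigma)(I_{2m}+A_\sigma)=(I_{2m}+A_\sigma)(I_{2m}-A_\sigma)=I_{2m}-A_\sigma^2=(I_m-A\overline A)_\phi ,
\]
where $A_\sigma^2=(A\overline A)_\phi$ follows from (\ref{teq60}) with $U=V=A$, and where I use that $(\cdot)_\phi$ is additive with $(I_m)_\phi=I_{2m}$. Writing $P=I_{2m}-A_\sigma$ and $Q=I_{2m}+A_\sigma$, so that $PQ=QP$ and $\tfrac12(P+Q)=I_{2m}$, a short argument gives the kernel splitting
\[
\ker\big((I_m-A\overline A)_\phi\big)=\ker(I_{2m}-A_\sigma)\oplus\ker(I_{2m}+A_\sigma):
\]
for $v\in\ker(PQ)$ one writes $v=\tfrac12 Qv+\tfrac12 Pv$ and checks $P(\tfrac12 Qv)=0$ and $Q(\tfrac12 Pv)=0$, while directness follows since $v\in\ker P\cap\ker Q$ forces $(P+Q)v=2v=0$.

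The second step is to prove $\dim\ker(I_{2m}-A_\sigma)=\dim\ker(I_{2m}+A_\sigma)$. I would do this by verifying directly, with $Q_m$ as in (\ref{tqs}) (noting $Q_m^{-1}=-Q_m$), that $Q_mA_\sigma Q_m^{-1}=-A_\sigma$; thus $A_\sigma$ and $-A_\sigma$ are similar, hence $\mathrm{rank}(I_{2m}+A_\sigma)=\mathrm{rank}(I_{2m}-A_\sigma)$ and the two kernels have equal dimension. Combining with the reused fact $\mathrm{rank}\big((I_m-A\overline A)_\phi\big)=2\mathrm{rank}(I_m-A\overline A)$,
\[
2\dim\ker(I_{2m}-A_\sigma)=\dim\ker\big((I_m-A\overline A)_\phi\big)=2m-2\mathrm{rank}(I_m-A\overline A),
\]
so $\dim\ker(I_{2m}-A_\sigma)=m-\mathrm{rank}(I_m-A\overline A)$, and therefore $\mathrm{rank}(I_{2m}-A_\sigma)=2m-\dim\ker(I_{2m}-A_\sigma)=m+\mathrm{rank}(I_m-A\overline A)$, which is (\ref{teq52}).

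The routine pieces are the block-matrix identities $A_\sigma=A_\phi\,\mathrm{diag}(I_m,-I_m)$, $A_\sigma^2=(A\overline A)_\phi$, and $Q_mA_\sigma Q_m^{-1}=-A_\sigma$. The only non-formal points are the kernel splitting $\ker(PQ)=\ker P\oplus\ker Q$, which works here precisely because $\tfrac12(P+Q)=I_{2m}$ and $P,Q$ commute, and the similarity $A_\sigma\sim-A_\sigma$, which upgrades the set-level spectral symmetry of Lemma \ref{tlm9} to an equality of multiplicities. I expect this last point — getting equality of the two kernel \emph{dimensions}, not merely the eigenvalue-set statement of Lemma \ref{tlm9} — to be the main thing to get right.
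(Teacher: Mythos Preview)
Your proof of (\ref{teq51}) is essentially identical to the paper's: both use $A_\sigma(I_m)_\sigma=A_\phi$ from (\ref{teq60}) and then diagonalize $A_\phi$ via (\ref{teq65}).

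For (\ref{teq52}) your argument is correct but takes a genuinely different route. The paper proceeds by a direct computation: it writes $I_{2m}-A_\sigma=(I_m)_\sigma-A_\phi$ after right-multiplying by the invertible $(I_m)_\sigma$, conjugates by the unitary $Z_m$ of Lemma~\ref{tlm2} to obtain the block matrix $\left[\begin{smallmatrix}-\overline{A}&-\mathrm{i}I_m\\ \mathrm{i}I_m&-A\end{smallmatrix}\right]$, and then performs an explicit column operation to reach a block-triangular form whose rank is visibly $m+\mathrm{rank}(I_m-A\overline{A})$. Your approach instead factors $I_{2m}-A_\sigma^2=(I_m-A\overline{A})_\phi$, splits $\ker(PQ)=\ker P\oplus\ker Q$ using $P+Q=2I_{2m}$, and then invokes the similarity $Q_mA_\sigma Q_m^{-1}=-A_\sigma$ to equate the two kernel dimensions. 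Both are short; the paper's is a single explicit block reduction, while yours is more structural and, as you note, upgrades the spectral symmetry of Lemma~\ref{tlm9} to a similarity statement, which is what makes the kernel-dimension equality go through. Your method also has the mild advantage that the kernel-splitting step $\ker(PQ)=\ker P\oplus\ker Q$ is a reusable lemma valid whenever $P,Q$ commute and $P+Q$ is invertible.
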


\begin{proof}
We first show equation (\ref{teq51}). By using Lemma \ref{tlm2}, we have%
\begin{align*}
\mathrm{rank}\left(  A_{\sigma}\right)   &  =\mathrm{rank}\left(  A_{\sigma
}\left(  I_{m}\right)  _{\sigma}\right)  =\mathrm{rank}\left(  A_{\phi}\right)
\\
&  =\mathrm{rank}\left(  \left[
\begin{array}
[c]{cc}%
\overline{A} & 0\\
0 & A
\end{array}
\right]  \right) \\
&  =\mathrm{rank}\left(  A\right)  +\mathrm{rank}\left(  \overline{A}\right)
\\
&  =2\text{ }\mathrm{rank}\left(  A\right)  .
\end{align*}

We next show (\ref{teq52}). Invoking of (\ref{teq60}) and (\ref{teq65}) we
obtain
\begin{align*}
\mathrm{rank}\left(  I_{2m}-A_{\sigma}\right)   &  =\mathrm{rank}\left(
\left(  I_{m}\right)  _{\sigma}-A_{\sigma}\left(  I_{m}\right)  _{\sigma
}\right) \\
&  =\mathrm{rank}\left(  \left(  I_{m}\right)  _{\sigma}-A_{\phi}\right) \\
&  =\mathrm{rank}\left(  Z_{m}^{\mathrm{H}}\left(  \left(  I_{m}\right)
_{\sigma}-A_{\phi}\right)  Z_{m}\right) \\
&  =\mathrm{rank}\left(  Z_{m}^{\mathrm{H}}\left(  I_{m}\right)  _{\sigma
}Z_{m}-\left[
\begin{array}
[c]{cc}%
\overline{A} & 0\\
0 & A
\end{array}
\right]  \right) \\
&  =\mathrm{rank}\left(  \left[
\begin{array}
[c]{cc}%
-\overline{A} & -\mathrm{i}I_{m}\\
\mathrm{i}I_{m} & -A
\end{array}
\right]  \right) \\
&  =\mathrm{rank}\left(  \left[
\begin{array}
[c]{cc}%
-\overline{A} & -\mathrm{i}I_{m}\\
\mathrm{i}I_{m} & -A
\end{array}
\right]  \left[
\begin{array}
[c]{cc}%
0 & -\mathrm{i}I_{m}\\
I_{m} & \overline{A}%
\end{array}
\right]  \right) \\
&  =\mathrm{rank}\left(  \left[
\begin{array}
[c]{cc}%
-\mathrm{i}I_{m} & 0\\
-A & I_{m}-A\overline{A}%
\end{array}
\right]  \right) \\
&  =m+\mathrm{rank}\left(  I_{m}-A\overline{A}\right)  .
\end{align*}
The proof is finished.
\end{proof}

\subsection{Solvability of Equation $X=A\overline{X}B+C$}

Based on linear equation theory, to study the inhomogeneous matrix equation
(\ref{teq7}), we should study the homogeneous matrix equation $X=A\overline
{X}B.$ Associated with this equation, we define a set $\mathscr{S}^{\prime
}\subset\mathbf{C}^{m\times n}$ as
\[
\mathscr{S}^{\prime}=\left\{  X\in\mathbf{C}^{m\times n}:X=A\overline
{X}B\right\}  .
\]
For any $X_{1}\in\mathscr{S}^{\prime}$ and $X_{2}\in\mathscr{S}^{\prime}$ and
any $\alpha\in\mathbf{C}$, we have%
\[
\left(  X_{1}+\alpha X_{2}\right)  -A\left(  \overline{X_{1}+\alpha X_{2}%
}\right)  B=\alpha X_{2}-\overline{\alpha}A\overline{X_{2}}B,
\]
which is not zero in general, namely, $X_{1}+\alpha X_{2}\notin
\mathscr{S}^{\prime}$ in general. Therefore, $\mathscr{S}^{\prime}$ is
\textit{not} a linear subspace contained in $\mathbf{C}^{m\times n}$ over the
field $\mathbf{C}$. Alternatively, if we define a real set $\mathscr{S}\subset
\mathbf{R}^{mn}$ as
\[
\mathscr{S}=\left\{  \varphi\left(  X\right)  :X=A\overline{X}B,X\in
\mathbf{C}^{m\times n}\right\}  .
\]
then by using the fact that mapping $\varphi\left(  \cdot\right)  $ is
bijective and (\ref{teq77}), we have%
\begin{align}
\mathscr{S}  &  =\left\{  \varphi\left(  X\right)  :\varphi\left(  X\right)
=\varphi\left(  A\overline{X}B\right)  ,\text{ }X\in\mathbf{C}^{m\times
n}\right\} \nonumber\\
&  =\left\{  \varphi\left(  X\right)  :\varphi\left(  X\right)  =\left(
B^{\mathrm{T}}\otimes A\right)  _{\sigma}\varphi\left(  X\right)  ,\text{
}X\in\mathbf{C}^{m\times n}\right\} \nonumber\\
&  =\left\{  x:x=\left(  B^{\mathrm{T}}\otimes A\right)  _{\sigma}x,\text{
}x\in\mathbf{R}^{2mn}\right\} \nonumber\\
&  =\left\{  x:\left(  I_{2mn}-\left(  B^{\mathrm{T}}\otimes A\right)
_{\sigma}\right)  x=0,\text{ }x\in\mathbf{R}^{2mn}\right\}  , \label{teqs}%
\end{align}
which is clearly a linear subspace contained in $\mathbf{R}^{2mn}$ over the
field $\mathbf{R}.$ Moreover, we can see from (\ref{teqs}) that%
\begin{equation}
\dim\left(  \mathscr{S}\right)  =2mn-\mathrm{rank}\left(  I_{2mn}-\left(
B^{\mathrm{T}}\otimes A\right)  _{\sigma}\right)  . \label{teq79}%
\end{equation}
where $\dim\left(  \mathscr{S}\right)  $ denotes the degree of freedom
(measured by the maximal number of real parameters) in the solution.
Therefore, $\mathscr{S}$ is more suitable for characterizing the general
solutions of equation (\ref{teq7}). In fact, if we let
\[
\mathscr{X}=\left\{  X\in\mathbf{C}^{m\times n}:X=A\overline{X}B+C\right\}  ,
\]
and $X^{\dag}$ be any particular solution of equation (\ref{teq7}), then%
\[
\mathscr{X}=\left\{  X^{\dag}+\varphi^{-1}\left(  x\right)  :\forall
x\in\mathscr{S}\right\}  .
\]

To present our main result, we need an auxiliary result.

\begin{lemma}
\label{tlm14}Equation (\ref{teq7}) has a unique solution for arbitrary $C$ if
and only if
\begin{equation}
\eta\gamma\neq1,\text{ \ }\forall\eta\in\lambda\left\{  A\overline{A}\right\}
,\forall\gamma\in\lambda\left\{  \overline{B}B\right\}  . \label{teq19}%
\end{equation}

\end{lemma}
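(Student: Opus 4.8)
The statement characterizes when equation (\ref{teq7}), $X=A\overline{X}B+C$, has a unique solution for every $C$. The natural route is to pass through the real representation. Taking $\varphi(\cdot)$ of both sides of (\ref{teq7}) and invoking (\ref{teq77}) of Lemma \ref{tlm12}, equation (\ref{teq7}) is equivalent to the real linear system
\[
\left(I_{2mn}-\left(B^{\mathrm{T}}\otimes A\right)_{\sigma}\right)\varphi(X)=\varphi(C).
\]
Since $\varphi$ is bijective, equation (\ref{teq7}) has a unique solution for every $C$ if and only if the matrix $I_{2mn}-\left(B^{\mathrm{T}}\otimes A\right)_{\sigma}$ is nonsingular, i.e. if and only if $1\notin\lambda\{(B^{\mathrm{T}}\otimes A)_{\sigma}\}$. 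So the whole problem reduces to translating the condition ``$1$ is not an eigenvalue of $(B^{\mathrm{T}}\otimes A)_{\sigma}$'' into the spectral condition (\ref{teq19}).

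The key tool is Lemma \ref{tlm11}, specifically (\ref{teq52}) applied to the matrix $M\triangleq B^{\mathrm{T}}\otimes A\in\mathbf{C}^{mn\times mn}$. That identity gives
\[
\mathrm{rank}\left(I_{2mn}-M_{\sigma}\right)=mn+\mathrm{rank}\left(I_{mn}-M\overline{M}\right).
\]
Hence $I_{2mn}-M_{\sigma}$ is nonsingular (rank $2mn$) if and only if $I_{mn}-M\overline{M}$ is nonsingular, i.e. $1\notin\lambda\{M\overline{M}\}$. Now compute $M\overline{M}=\left(B^{\mathrm{T}}\otimes A\right)\left(\overline{B^{\mathrm{T}}}\otimes\overline{A}\right)=\left(B^{\mathrm{T}}\overline{B}^{\mathrm{T}}\right)\otimes\left(A\overline{A}\right)=\left(\overline{B}B\right)^{\mathrm{T}}\otimes\left(A\overline{A}\right)$, using $\overline{B^{\mathrm{T}}}=\overline{B}^{\mathrm{T}}$ and $(\overline{B}B)^{\mathrm{T}}=B^{\mathrm{T}}\overline{B}^{\mathrm{T}}$. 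The spectrum of a Kronecker product is the set of products of eigenvalues, and $\lambda\{(\overline{B}B)^{\mathrm{T}}\}=\lambda\{\overline{B}B\}$, so
\[
\lambda\{M\overline{M}\}=\left\{\eta\gamma:\eta\in\lambda\{A\overline{A}\},\ \gamma\in\lambda\{\overline{B}B\}\right\}.
\]
Therefore $1\notin\lambda\{M\overline{M}\}$ is exactly condition (\ref{teq19}), and the lemma follows.

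I would present this as: (i) reduce to nonsingularity of $I_{2mn}-(B^{\mathrm{T}}\otimes A)_{\sigma}$ via $\varphi$ and Lemma \ref{tlm12}; (ii) apply (\ref{teq52}) of Lemma \ref{tlm11} with $A$ there replaced by $B^{\mathrm{T}}\otimes A$ to reduce to nonsingularity of $I_{mn}-(B^{\mathrm{T}}\otimes A)\overline{(B^{\mathrm{T}}\otimes A)}$; (iii) simplify the product to a Kronecker form and read off its spectrum. The only slightly delicate point is the bookkeeping in step (iii) — getting the conjugates and transposes right so that $(B^{\mathrm{T}}\otimes A)\overline{(B^{\mathrm{T}}\otimes A)}$ comes out as $(\overline{B}B)^{\mathrm{T}}\otimes(A\overline{A})$ rather than, say, $(B\overline{B})^{\mathrm{T}}\otimes(\overline{A}A)$; since $A\overline{A}$ and $\overline{A}A$ are similar (and likewise $\overline{B}B$ and $B\overline{B}$), the final condition is symmetric under these swaps, so no real obstacle arises. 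Everything else is a direct chain of equivalences.
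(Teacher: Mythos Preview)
Your argument is correct and follows essentially the same route as the paper: reduce via $\varphi$ and Lemma~\ref{tlm12} to the nonsingularity of $I_{2mn}-(B^{\mathrm{T}}\otimes A)_{\sigma}$, apply the rank identity (\ref{teq52}) of Lemma~\ref{tlm11} to reduce to $I_{mn}-(\overline{B}B)^{\mathrm{T}}\otimes A\overline{A}$, and read off the Kronecker spectrum. The paper's proof is the same chain of equivalences, written in terms of ranks rather than eigenvalues at the intermediate step.
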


\begin{proof}
Taking $\varphi\left(  \cdot\right)  $ on both sides of (\ref{teq7}) and using
(\ref{teq77}) give%
\begin{equation}
\left(  I_{2mn}-\left(  B^{\mathrm{T}}\otimes A\right)  _{\sigma}\right)
\varphi\left(  X\right)  =\varphi\left(  C\right)  . \label{teq80}%
\end{equation}

Therefore, equation (\ref{teq7}) has a unique solution for arbitrary $C$ if
and only if%
\begin{equation}
\mathrm{rank}\left(  I_{2mn}-\left(  B^{\mathrm{T}}\otimes A\right)  _{\sigma
}\right)  =2mn. \label{teq61}%
\end{equation}
On the other hand, in view of Lemma \ref{tlm11}, we can see that%
\begin{align}
\mathrm{rank}\left(  I_{2mn}-\left(  B^{\mathrm{T}}\otimes A\right)  _{\sigma
}\right)   &  =mn+\mathrm{rank}\left(  I_{mn}-\left(  B^{\mathrm{T}}\otimes
A\right)  \left(  \overline{B^{\mathrm{T}}\otimes A}\right)  \right)
\nonumber\\
&  =mn+\mathrm{rank}\left(  I_{mn}-\left(  \overline{B}B\right)  ^{\mathrm{T}%
}\otimes A\overline{A}\right)  . \label{teq49}%
\end{align}
Hence, (\ref{teq61}) is true if and only if%
\[
\mathrm{rank}\left(  I_{mn}-\left(  \overline{B}B\right)  ^{\mathrm{T}}\otimes
A\overline{A}\right)  =mn,
\]
or, equivalent, $1\notin\lambda\left\{  \left(  \overline{B}B\right)
^{\mathrm{T}}\otimes A\overline{A}\right\}  .$ This is further equivalent to
(\ref{teq19}). The proof is completed.
\end{proof}

We then can give the following alternative conditions regarding solvability of
equation (\ref{teq7}), which can be regarded as the generalization of Theorem
\ref{tth1} to equation (\ref{teq7}).

\begin{theorem}
\label{tth3}Matrix equation (\ref{teq7}) is solvable if and only if the
following equation%
\begin{equation}
W=A\overline{A}W\overline{B}B+A\overline{C}B+C, \label{teq9}%
\end{equation}
is solvable with unknown $W$. More specifically:

\begin{enumerate}
\item If $X$ is a solution of equation (\ref{teq7}), then $W=X$ is a solution
of equation (\ref{teq9}).

\item If $\boldsymbol{W}$ is the general closed-form solution of equation
(\ref{teq9}), then the general closed-form solution of equation (\ref{teq7})
is given by
\begin{equation}
\boldsymbol{X}=\frac{1}{2}\left(  \boldsymbol{W}+A\overline{\boldsymbol{W}%
}B+C\right)  . \label{teq76}%
\end{equation}

\item Equation (\ref{teq7}) has a unique solution $X^{\star}$ if and only if
equation (\ref{teq9}) has a unique solution $W^{\star}$. Moreover, $X^{\star
}=W^{\star}.$
\end{enumerate}
\end{theorem}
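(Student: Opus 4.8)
The plan is to mirror the strategy used for Theorem \ref{tth1}, but now applying $\varphi(\cdot)$ instead of $\mathrm{vec}(\cdot)$ so that the conjugation in $A\overline{X}B$ is handled by the real representation $(\cdot)_{\sigma}$. First I would apply $\varphi(\cdot)$ to both sides of equation (\ref{teq9}). Using (\ref{teq77}) and (\ref{teq78}) of Lemma \ref{tlm12}, the term $\varphi(A\overline{A}\,W\,\overline{B}B)$ should reduce to $(B^{\mathrm{T}}\otimes A)_{\sigma}\varphi(\overline{A}\,W\,\overline{B})$, and then a second application of (\ref{teq77}) (noting that $A\overline{A}W\overline{B}B = A\,\overline{(\,\overline{A}\,W\,\overline{B}\,)}\,B$ only after care is taken with the conjugates — this is where I expect the bookkeeping to be delicate) turns the right-hand side into $(B^{\mathrm{T}}\otimes A)_{\sigma}^{2}\varphi(W)$. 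Similarly $\varphi(A\overline{C}B+C) = \bigl(I_{2mn}+(B^{\mathrm{T}}\otimes A)_{\sigma}\bigr)\varphi(C)$. Writing $\mathit{\Xi} = (B^{\mathrm{T}}\otimes A)_{\sigma}$, equation (\ref{teq9}) becomes $(I_{2mn}-\mathit{\Xi}^{2})\varphi(W) = (I_{2mn}+\mathit{\Xi})\varphi(C)$, which is exactly of the form (\ref{teq90}). Applying $\varphi(\cdot)$ to (\ref{teq7}) itself gives $(I_{2mn}-\mathit{\Xi})\varphi(X) = \varphi(C)$, of the form (\ref{teq92}), and applying $\varphi(\cdot)$ to the claimed formula (\ref{teq76}) gives $\varphi(\boldsymbol{X}) = \tfrac12(I_{2mn}+\mathit{\Xi})\varphi(\boldsymbol{W}) + \tfrac12\varphi(C)$, of the form (\ref{teq91}).

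With these three identifications in hand, Items 1 and 2 follow immediately from the corresponding items of Proposition \ref{tpp1}, since $\varphi(\cdot)$ is a bijection between $\mathbf{C}^{m\times n}$ and $\mathbf{R}^{2mn}$: a matrix $X$ solves (\ref{teq7}) iff $\varphi(X)$ solves (\ref{teq92})-version, a matrix $W$ solves (\ref{teq9}) iff $\varphi(W)$ solves the (\ref{teq90})-version, and the general-solution correspondence (\ref{teq91}) transported back through $\varphi^{-1}$ is precisely (\ref{teq76}). One subtlety to check is that $\varphi^{-1}$ carries real affine subspaces to the appropriate solution sets, which is automatic from linearity of $\varphi$.

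For Item 3, I would argue both directions directly rather than quoting Item 3 of Proposition \ref{tpp1} (which only gives one direction). By Lemma \ref{tlm14}, equation (\ref{teq7}) has a unique solution iff (\ref{teq19}) holds, i.e. $1\notin\lambda\{(\overline{B}B)^{\mathrm{T}}\otimes A\overline{A}\}$. The same kind of computation applied to (\ref{teq9}) — which is a standard Stein equation $W = \mathcal{A}W\mathcal{B} + \mathcal{C}$ with $\mathcal{A}=A\overline{A}$, $\mathcal{B}=\overline{B}B$ — shows that (\ref{teq9}) has a unique solution iff $1\notin\lambda\{\mathcal{B}^{\mathrm{T}}\otimes\mathcal{A}\} = \lambda\{(\overline{B}B)^{\mathrm{T}}\otimes A\overline{A}\}$, which is the very same condition. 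Hence the two uniqueness statements are equivalent, and when they hold, Item 1 forces $X^{\star}=W^{\star}$ (the unique solution of one is a solution of the other).

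The main obstacle I anticipate is purely notational: correctly tracking the conjugations when expanding $\varphi(A\overline{A}W\overline{B}B)$ via two applications of (\ref{teq77}), and verifying that $\mathit{\Xi}^{2} = (B^{\mathrm{T}}\otimes A)_{\sigma}^{2}$ is indeed what appears — here identity (\ref{teq60}), $U_{\sigma}V_{\sigma} = (U\overline{V})_{\phi}$, together with (\ref{teq65}) relating $(\cdot)_{\phi}$ back to a block-diagonal of $A$ and $\overline{A}$, is the tool that makes the algebra close. Once that single computation is done cleanly, everything else is a direct transcription of Proposition \ref{tpp1} and Lemma \ref{tlm14}.
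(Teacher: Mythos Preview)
Your approach is essentially identical to the paper's: apply $\varphi(\cdot)$, reduce (\ref{teq7}), (\ref{teq9}), and (\ref{teq76}) to the forms (\ref{teq92}), (\ref{teq90}), (\ref{teq91}) of Proposition \ref{tpp1} with $\mathit{\Xi}=E_{\sigma}=(B^{\mathrm{T}}\otimes A)_{\sigma}$, and then invoke Lemma \ref{tlm14} to get the biconditional in Item 3. One correction to the bookkeeping you flagged as delicate: the identity you want is $A\overline{A}W\overline{B}B = A\,\overline{(A\overline{W}B)}\,B$ (not $A\,\overline{(\overline{A}W\overline{B})}\,B$), so two applications of (\ref{teq77}) give $\varphi(A\overline{A}W\overline{B}B)=E_{\sigma}\varphi(A\overline{W}B)=E_{\sigma}^{2}\varphi(W)$ directly, and neither (\ref{teq60}) nor (\ref{teq65}) is needed.
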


\begin{proof}
\textit{Proofs of Item 1 and Item 2}. Taking $\varphi\left(  \cdot\right)  $
on both sides of (\ref{teq9}), using Lemma \ref{tlm12} and denoting
$E=B^{\mathrm{T}}\otimes A,$ we get%
\begin{align*}
\varphi\left(  W\right)   &  =\varphi\left(  A\overline{A}W\overline
{B}B\right)  +\varphi\left(  A\overline{C}B\right)  +\varphi\left(  C\right)
\\
&  =E_{\sigma}\varphi\left(  A\overline{W}B\right)  +\left(  I_{2mn}%
+E_{\sigma}\right)  \varphi\left(  C\right) \\
&  =E_{\sigma}^{2}\varphi\left(  W\right)  +\left(  I_{2mn}+E_{\sigma}\right)
\varphi\left(  C\right)  ,
\end{align*}
which is further equivalent to%
\begin{equation}
\left(  I_{2mn}-E_{\sigma}^{2}\right)  \varphi\left(  W\right)  =\left(
I_{2mn}+E_{\sigma}\right)  \varphi\left(  C\right)  . \label{teq81}%
\end{equation}

Similarly, equation (\ref{teq7}) is equivalent to (\ref{teq80}) which can be
rewritten as%
\begin{equation}
\left(  I_{2mn}-E_{\sigma}\right)  \varphi\left(  X\right)  =\varphi\left(
C\right)  , \label{teq80a}%
\end{equation}
and the expression in (\ref{teq76}) is equivalent to
\begin{equation}
\varphi\left(  \boldsymbol{X}\right)  =\frac{1}{2}\left(  I_{2mn}+E_{\sigma
}\right)  \varphi\left(  \boldsymbol{W}\right)  +\frac{1}{2}\varphi\left(
C\right)  . \label{teq95}%
\end{equation}
Since equation (\ref{teq81}) is in the form of (\ref{teq90}), equation
(\ref{teq80a}) is in the form of (\ref{teq92}) and expression (\ref{teq95}) is
in the form of (\ref{teq91}), Item 1 and Item 2 follows from Proposition
\ref{tpp1} directly.

\textit{Proof of Item 3}. Notice that, by using Lemma \ref{tlm14}, equation
(\ref{teq7}) has a unique solution $X^{\star}$ if and only if (\ref{teq19}) is
satisfied, which is also the necessary and sufficient condition for the
existence of unique solution of equation (\ref{teq9}). The proof is finished.
\end{proof}

\begin{remark}
The difference between Theorem \ref{tth1} and Theorem \ref{tth3} is that
condition (\ref{teq19}) in Theorem \ref{tth3} is necessary and sufficient for
the existence of unique solution of equation (\ref{teq7}).
\end{remark}

\begin{remark}
Different from Proposition \ref{tlm4}, the solvability condition for equation
(\ref{teq7}) in Theorem \ref{tth3} is related with solvability of equation
(\ref{teq9}) which has the same dimension as (\ref{teq7}). Also, different
from Proposition \ref{tlm4}, we have given in Theorem \ref{tth3} a necessary
and sufficient condition for the existence of unique solution of equation
(\ref{teq7}).
\end{remark}

The following corollary is then a consequence of Theorem \ref{tth3}.

\begin{corollary}
\label{tcoro1}Equation (\ref{teq7}) is solvable if and only if%
\begin{equation}
\mathrm{rank}\left(  I_{mn}-\left(  \overline{B}B\right)  ^{\mathrm{T}}\otimes
A\overline{A}\right)  =\mathrm{rank}\left(  \left[
\begin{array}
[c]{cc}%
I_{mn}-\left(  \overline{B}B\right)  ^{\mathrm{T}}\otimes A\overline{A} &
\left(  B^{\mathrm{T}}\otimes A\right)  \mathrm{vec}\left(  \overline
{C}\right)  +\mathrm{vec}\left(  C\right)
\end{array}
\right]  \right)  . \label{teq55}%
\end{equation}
If the above relation is satisfied, then the degree of freedom (measured by
the maximal number of free real parameters), namely, $\dim\left(
\mathscr{S}\right)  ,$ in the solution is given by%
\begin{equation}
\dim\left(  \mathscr{S}\right)  =mn-\mathrm{rank}\left(  I_{mn}-\left(
\overline{B}B\right)  ^{\mathrm{T}}\otimes A\overline{A}\right)  .
\label{teq57}%
\end{equation}

\end{corollary}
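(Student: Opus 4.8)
The plan is to deduce both assertions from Theorem \ref{tth3}, together with the elementary rank criterion for linear equations and the rank identity already established in the proof of Lemma \ref{tlm14}.

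First I would settle the solvability equivalence. By Theorem \ref{tth3}, equation (\ref{teq7}) is solvable if and only if the auxiliary Stein equation (\ref{teq9}), namely $W=A\overline{A}W\overline{B}B+A\overline{C}B+C$, is solvable for $W\in\mathbf{C}^{m\times n}$. Since (\ref{teq9}) is an ordinary linear matrix equation, applying $\mathrm{vec}(\cdot)$ and using (\ref{axb1}) together with $\mathrm{vec}(A\overline{C}B)=(B^{\mathrm{T}}\otimes A)\mathrm{vec}(\overline{C})$ converts it into the complex linear system
\[
\left(I_{mn}-(\overline{B}B)^{\mathrm{T}}\otimes A\overline{A}\right)\mathrm{vec}(W)=(B^{\mathrm{T}}\otimes A)\mathrm{vec}(\overline{C})+\mathrm{vec}(C).
\]
A linear system $M\omega=d$ is solvable precisely when $\mathrm{rank}(M)=\mathrm{rank}[\,M\ \ d\,]$, and writing this out for the system above gives exactly the rank condition (\ref{teq55}). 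Hence (\ref{teq55}) is necessary and sufficient for the solvability of (\ref{teq7}).

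Second, for the degree of freedom I would deliberately work over $\mathbf{R}$ via the bijection $\varphi$ rather than over $\mathbf{C}$, since, as the paper already observes, the solution set of $X=A\overline{X}B$ is not a complex subspace. By (\ref{teq79}), the dimension of the real solution space $\mathscr{S}$ of the homogeneous equation is $2mn-\mathrm{rank}(I_{2mn}-(B^{\mathrm{T}}\otimes A)_{\sigma})$, and the computation carried out in the proof of Lemma \ref{tlm14}, specifically (\ref{teq49}) (which rests on Lemma \ref{tlm11}), shows $\mathrm{rank}(I_{2mn}-(B^{\mathrm{T}}\otimes A)_{\sigma})=mn+\mathrm{rank}(I_{mn}-(\overline{B}B)^{\mathrm{T}}\otimes A\overline{A})$. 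Substituting the latter into the former yields (\ref{teq57}). Finally, because $\mathscr{X}=\{X^{\dag}+\varphi^{-1}(x):x\in\mathscr{S}\}$ for any particular solution $X^{\dag}$, the number $\dim(\mathscr{S})$ genuinely counts the maximal number of free real parameters in the general solution.

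I do not expect a real obstacle; the only point requiring care is exactly this last one, namely that the complex rank of the coefficient matrix appearing in (\ref{teq55}) does not itself give the real degree of freedom, so the parameter count must be routed through the real representation $(\cdot)_{\sigma}$ and Lemma \ref{tlm11}.
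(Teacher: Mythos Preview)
Your proposal is correct and follows exactly the route the paper intends: the corollary is presented as a direct consequence of Theorem \ref{tth3}, and your argument---vectorizing the auxiliary Stein equation (\ref{teq9}) to obtain the rank criterion (\ref{teq55}), then combining (\ref{teq79}) with (\ref{teq49}) to deduce (\ref{teq57})---is precisely the derivation the paper has in mind. Your explicit remark that the real degree of freedom must be computed via the real representation rather than from the complex rank alone is a useful clarification that the paper leaves implicit.
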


\subsection{Relationship Between Equations $X=A\overline{X}B+C$ and
$Y=A_{\sigma}YB_{\sigma}+C_{\sigma}$}

From Lemma \ref{tlm4} we can see easily that if equation (\ref{teq14}) has a
unique solution, then equation (\ref{teq7}) must also have a unique solution.
But the converse is not clear. In this subsection, by using the results
obtained in the above two subsections, we are able to close this gap. Our
result reveals some deep relationships between these two equations
(\ref{teq7}) and (\ref{teq14}).

\begin{theorem}
\label{tth9} Equation (\ref{teq7}) has a unique solution $X^{\star}$ if and
only if equation (\ref{teq14}) has a unique solution $Y^{\star}$. Moreover,
there holds $Y^{\star}=X_{\sigma}^{\star}$.
\end{theorem}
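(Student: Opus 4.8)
The plan is to reduce the uniqueness question for equation (\ref{teq14}), whose unknown $Y$ has dimension $2m\times 2n$, to the condition (\ref{teq19}) already shown (Lemma~\ref{tlm14}) to be necessary and sufficient for uniqueness of the solution of equation (\ref{teq7}). The forward direction is essentially already recorded: if equation (\ref{teq14}) has a unique solution, then by Lemma~\ref{tlm4} equation (\ref{teq7}) is solvable, and I would argue its solution must be unique because any two solutions $X_1\ne X_2$ of (\ref{teq7}) would, via the real-representation construction in Lemma~\ref{tlm4}, produce distinct solutions of (\ref{teq14}) — more cleanly, I would observe that $Y=X_\sigma$ solves (\ref{teq14}) whenever $X$ solves (\ref{teq7}), using $A_\sigma X_\sigma B_\sigma=(A\overline{X}B)_\sigma$ which follows from applying (\ref{teq60}) twice: $A_\sigma X_\sigma=(A\overline{X})_\phi$ and then one checks $(A\overline{X})_\phi B_\sigma$; actually the cleanest route is to note $A_\sigma X_\sigma B_\sigma = (A_\sigma X_\sigma) B_\sigma$ and use that $U_\sigma V_\sigma=(U\overline V)_\phi$ together with the companion identity $U_\phi V_\sigma = (UV)_\sigma$, so $A_\sigma X_\sigma B_\sigma=(A\overline X)_\phi B_\sigma=(A\overline X B)_\sigma=(X-C)_\sigma = X_\sigma - C_\sigma$. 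Hence $X\mapsto X_\sigma$ is an injective map from solutions of (\ref{teq7}) to solutions of (\ref{teq14}), giving the forward implication and also the identity $Y^\star=X_\sigma^\star$ once we know both are unique.

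For the reverse direction, suppose equation (\ref{teq7}) has a unique solution, i.e.\ (\ref{teq19}) holds. I need to show (\ref{teq14}) has a unique solution. Taking $\mathrm{vec}(\cdot)$ of (\ref{teq14}) and using (\ref{axb1}), uniqueness of the solution of (\ref{teq14}) for the given right-hand side is equivalent to $1\notin\lambda\{B_\sigma^{\mathrm T}\otimes A_\sigma\}$, equivalently $\mu\nu\ne1$ for all $\mu\in\lambda\{A_\sigma\}$, $\nu\in\lambda\{B_\sigma^{\mathrm T}\}=\lambda\{B_\sigma\}$. So the crux is a spectral statement: $\mu\nu\ne1$ for all $\mu\in\lambda\{A_\sigma\}$, $\nu\in\lambda\{B_\sigma\}$ if and only if $\eta\gamma\ne1$ for all $\eta\in\lambda\{A\overline A\}$, $\gamma\in\lambda\{\overline B B\}$. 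The bridge is (\ref{teq65}): since $A_\sigma(I_m)_\sigma=A_\phi=Z_m\,\mathrm{diag}(\overline A,A)\,Z_m^{\mathrm H}$ and $(I_m)_\sigma$ is an involution, $A_\sigma$ is similar to $(I_m)_\sigma$ composed appropriately — more directly, $A_\sigma^2=A_\sigma(I_m)_\sigma(I_m)_\sigma A_\sigma=A_\phi\,\overline{(I_m)}_\phi^{-1}\cdots$; the clean fact is $A_\sigma^2=(A\overline A)_\phi=Z_m\,\mathrm{diag}(\overline{A\overline A},A\overline A)Z_m^{\mathrm H}$, using (\ref{teq60}) with $U=V=A$ to get $A_\sigma A_\sigma=(A\overline A)_\phi$ and then (\ref{teq65}). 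Therefore $\lambda\{A_\sigma^2\}=\lambda\{A\overline A\}\cup\lambda\{\overline{A\overline A}\}=\lambda\{A\overline A\}\cup\overline{\lambda\{A\overline A\}}$, and similarly $\lambda\{B_\sigma^2\}=\lambda\{\overline B B\}\cup\overline{\lambda\{\overline B B\}}$. Combining this with Lemma~\ref{tlm9} (which says $\lambda\{A_\sigma\}$ is closed under $\mu\mapsto-\mu$ and $\mu\mapsto\overline\mu$), one can pass between the spectra of $A_\sigma,B_\sigma$ and those of $A\overline A,\overline B B$: $\mu^2\in\lambda\{A_\sigma^2\}$ ranges exactly over $\lambda\{A\overline A\}$ up to conjugation, and the $\pm$-symmetry of $\lambda\{A_\sigma\}$ means the condition "$\mu\nu\ne1$ for all $\mu,\nu$" is equivalent to "$\mu^2\nu^2\ne1$ for all $\mu,\nu$" which is exactly (\ref{teq19}) (up to the harmless conjugations, which do not affect whether a product equals the real number $1$).

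The step I expect to be the main obstacle is the careful bookkeeping in this spectral equivalence — specifically, justifying rigorously that one may "take square roots" when translating between $\lambda\{A\overline A\},\lambda\{\overline BB\}$ and $\lambda\{A_\sigma\},\lambda\{B_\sigma\}$. The danger is that $\mu\nu=1$ with $\mu\in\lambda\{A_\sigma\}$, $\nu\in\lambda\{B_\sigma\}$ need not immediately force $\mu^2\nu^2 = 1$ to be the "bad" instance of (\ref{teq19}), and conversely; the $\pm\mu,\pm\overline\mu$ closure from Lemma~\ref{tlm9} is exactly what rescues this, since if $\mu\nu=1$ then $(\mu^2)(\nu^2)=1$ gives a bad pair for (\ref{teq19}) directly, while if $\eta\gamma=1$ with $\eta\in\lambda\{A\overline A\}$, $\gamma\in\lambda\{\overline BB\}$, picking square roots $\mu,\nu$ (which lie in $\lambda\{A_\sigma\},\lambda\{B_\sigma\}$ by the similarity above) gives $\mu^2\nu^2=1$, hence $\mu\nu=\pm1$, and by the $\pm$-closure $-\mu\in\lambda\{A_\sigma\}$ too, so one of the pairs $(\mu,\nu),(-\mu,\nu)$ has product $1$. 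Once this equivalence is in hand, the theorem follows by chaining it with Lemma~\ref{tlm14} and the injective map $X\mapsto X_\sigma$ from the first paragraph, which also yields $Y^\star=X_\sigma^\star$.
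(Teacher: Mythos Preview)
Your proposal is correct and follows the same overall strategy as the paper: reduce both uniqueness statements to spectral conditions (the paper's (\ref{teq18}) and (\ref{teq19})) and then show these are equivalent using the $\pm,\overline{\,\cdot\,}$ closure of $\lambda\{A_\sigma\}$ from Lemma~\ref{tlm9}. The forward implication via the injective map $X\mapsto X_\sigma$ (with the identity $U_\phi V_\sigma=(UV)_\sigma$ you supply) is exactly the content the paper summarizes as ``trivial'' at the end.

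The one genuine difference is how you obtain the link between $\lambda\{A_\sigma\}$ and $\lambda\{A\overline A\}$. The paper does this by the explicit determinant computation (\ref{teq20}), showing $\det(sI_{2m}-A_\sigma)=\det(s^2I_m-A\overline A)$ via Schur complement, which gives directly $\alpha\in\lambda\{A_\sigma\}\Rightarrow\alpha^2\in\lambda\{A\overline A\}$. You instead use the structural identity $A_\sigma^2=(A\overline A)_\phi$ from (\ref{teq60}) together with (\ref{teq65}), obtaining $\lambda\{A_\sigma^2\}=\lambda\{A\overline A\}\cup\overline{\lambda\{A\overline A\}}$. Your route is cleaner and reuses Lemma~\ref{tlm2} rather than an ad~hoc determinant manipulation; the price is that your ``harmless conjugations'' remark needs one extra observation you leave implicit, namely that $\lambda\{A\overline A\}$ (and $\lambda\{\overline BB\}$) is closed under complex conjugation, since $\overline{A\overline A}=\overline A A$ has the same spectrum as $A\overline A$. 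With that noted, the mixed cases in your equivalence argument go through, and the rest is identical to the paper's reasoning.
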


\begin{proof}
Obviously, equation (\ref{teq14}) has a unique solution for arbitrary
$C_{\sigma}$ if and only if%
\begin{equation}
\alpha\beta\neq1,\text{ \ \ }\forall\alpha\in\lambda\left\{  A_{\sigma
}\right\}  ,\forall\beta\in\lambda\left\{  B_{\sigma}\right\}  . \label{teq18}%
\end{equation}

For an arbitrary matrix $A=A_{1}+\mathrm{i}A_{2}\in\mathbf{C}^{m\times m}$
where $A_{1},A_{2}\in\mathbf{R}^{m\times m},$ we can compute
\begin{align}
\det\left(  sI_{2m}-A_{\sigma}\right)   &  =\mathrm{i}^{m}\det\left(  \left[
\begin{array}
[c]{cc}%
I_{m} & 0\\
-I_{m} & \mathrm{i}I_{m}%
\end{array}
\right]  \right)  \det\left(  \left[
\begin{array}
[c]{cc}%
sI_{m}-A_{1} & -A_{2}\\
-A_{2} & sI_{m}+A_{1}%
\end{array}
\right]  \right)  \det\left(  \left[
\begin{array}
[c]{cc}%
\frac{\mathrm{i}}{2}I_{m} & I_{m}\\
\frac{1}{2}I_{m} & \mathrm{i}I_{m}%
\end{array}
\right]  \right) \nonumber\\
&  =\mathrm{i}^{m}\det\left(  \left[
\begin{array}
[c]{cc}%
I_{m} & 0\\
-I_{m} & \mathrm{i}I_{m}%
\end{array}
\right]  \left[
\begin{array}
[c]{cc}%
sI_{m}-A_{1} & -A_{2}\\
-A_{2} & sI_{m}+A_{1}%
\end{array}
\right]  \left[
\begin{array}
[c]{cc}%
\frac{\mathrm{i}}{2}I_{m} & I_{m}\\
\frac{1}{2}I_{m} & \mathrm{i}I_{m}%
\end{array}
\right]  \right) \nonumber\\
&  =\mathrm{i}^{m}\det\left(  \left[
\begin{array}
[c]{cc}%
\frac{\mathrm{i}}{2}sI_{m}-\frac{\mathrm{i}}{2}\overline{A} & sI_{m}-A\\
\mathrm{i}\overline{A} & -2sI_{m}%
\end{array}
\right]  \right) \nonumber\\
&  =\mathrm{i}^{m}\det\left(  -2sI_{m}\right)  \det\left(  \frac{\mathrm{i}%
}{2}sI_{m}-\frac{\mathrm{i}}{2}\overline{A}-\left(  sI_{m}-A\right)  \left(
-2sI_{m}\right)  ^{-1}\mathrm{i}\overline{A}\right) \nonumber\\
&  =\mathrm{i}^{m}\det\left(  \left(  -2sI_{m}\right)  \left(  \frac
{\mathrm{i}}{2}sI_{m}-\frac{\mathrm{i}}{2}\overline{A}\right)  -\left(
sI_{m}-A\right)  \mathrm{i}\overline{A}\right) \nonumber\\
&  =\mathrm{i}^{m}\det\left(  -\mathrm{i}\left(  s^{2}I_{m}-A\overline
{A}\right)  \right) \nonumber\\
&  =\det\left(  s^{2}I_{m}-A\overline{A}\right) \nonumber\\
&  =\det\left(  s^{2}I_{m}-\overline{A}A\right)  . \label{teq20}%
\end{align}
where we have used Lemma \ref{tlm10}.

Let $\alpha$ and $\beta$ be two arbitrary elements in $\lambda\left\{
A_{\sigma}\right\}  $ and $\lambda\left\{  B_{\sigma}\right\}  ,$
respectively. Then it follows from Lemma \ref{tlm9} that $\left\{  \pm
\alpha,\pm\overline{\alpha}\right\}  \subset\lambda\left\{  A_{\sigma
}\right\}  ,\left\{  \pm\beta,\pm\overline{\beta}\right\}  \subset
\lambda\left\{  B_{\sigma}\right\}  $ and it follows from (\ref{teq20}) that
$\left\{  \alpha^{2},\overline{\alpha}^{2}\right\}  \subset\lambda\left\{
A\overline{A}\right\}  ,\{\beta^{2},\overline{\beta}^{2}\}\subset
\lambda\left\{  \overline{B}B\right\}  .$ Direct manipulation shows that
\[
st\neq1,\forall s\in\left\{  \pm\alpha,\pm\overline{\alpha}\right\}
\subset\lambda\left\{  A_{\sigma}\right\}  ,\forall t\in\left\{  \pm\beta
,\pm\overline{\beta}\right\}  \subset\lambda\left\{  B_{\sigma}\right\}  ,
\]
if and only if $\alpha\beta\neq\pm1$ and $\alpha\overline{\beta}\neq\pm1.$
Also, direct computation indicates that%
\[
st\neq1,\forall s\in\left\{  \alpha^{2},\overline{\alpha}^{2}\right\}
\subset\lambda\left\{  A\overline{A}\right\}  ,\forall t\in\{\beta
^{2},\overline{\beta}^{2}\}\subset\lambda\left\{  \overline{B}B\right\}  ,
\]
if and only if $\alpha\beta\neq\pm1$ and $\alpha\overline{\beta}\neq\pm1.$
According to the arbitrariness of $\alpha$ and $\beta,$ we conclude from the
above that (\ref{teq19}) and (\ref{teq18}) are equivalent.

Finally, it is trivial to show that $Y^{\star}=X_{\sigma}^{\star}.$ The proof
is completed.
\end{proof}

Proposition \ref{tlm4} and Theorem \ref{tth9} are still unable to answer the
following question: If $\boldsymbol{Y}$ is the general real solution of
equation (\ref{teq14}), then is the following
\begin{equation}
\boldsymbol{X}^{\prime}\triangleq\frac{1}{4}\left[
\begin{array}
[c]{cc}%
I_{m} & \mathrm{i}I_{m}%
\end{array}
\right]  \left(  \boldsymbol{Y}+Q_{m}\boldsymbol{Y}Q_{n}\right)  \left[
\begin{array}
[c]{c}%
I_{n}\\
\mathrm{i}I_{n}%
\end{array}
\right]  \label{teq25}%
\end{equation}
the general solution of equation (\ref{teq7})? The following example may
indicate that the answer is positive.

\begin{example}
\label{texamp2}Consider a linear equation in the form of (\ref{teq7}) with%
\[
A=\left[
\begin{array}
[c]{cc}%
2 & 0\\
0 & \mathrm{i}%
\end{array}
\right]  ,\text{ }B=1,\text{ }C=\left[
\begin{array}
[c]{c}%
C_{1}\\
C_{2}%
\end{array}
\right]  =\left[
\begin{array}
[c]{c}%
C_{11}+\mathrm{i}C_{12}\\
C_{21}+\mathrm{i}C_{22}%
\end{array}
\right]  ,
\]
where $C_{ij},i,j=1,2$ are real scalars. Then equation (\ref{teq9}) can be
written as%
\[
W=\left[
\begin{array}
[c]{c}%
w_{1}\\
w_{2}%
\end{array}
\right]  =A\overline{A}W\overline{B}B+A\overline{C}B+C=\left[
\begin{array}
[c]{c}%
4w_{1}+2\overline{C_{1}}+C_{1}\\
w_{2}+\mathrm{i}\overline{C_{2}}+C_{2}%
\end{array}
\right]  ,
\]
which is solvable if and only if $\mathrm{i}\overline{C_{2}}+C_{2}=0$.
Consequently, the general solution of equation (\ref{teq9}) is%
\[
\boldsymbol{W}=\left[
\begin{array}
[c]{c}%
-\frac{1}{3}\left(  2\overline{C_{1}}+C_{1}\right) \\
t
\end{array}
\right]  ,\text{ }\forall t=t_{1}+\mathrm{i}t_{2}\in\mathbf{C,}%
\]
where $t_{1}$ and $t_{2}$ are real scalars. Hence, according to Theorem
\ref{tth3}, the general solution of equation (\ref{teq7}) is given by%
\[
\boldsymbol{X}=\frac{1}{2}\left(  \boldsymbol{W}+A\overline{\boldsymbol{W}%
}B+C\right)  =\left[
\begin{array}
[c]{c}%
-\frac{1}{3}\left(  C_{1}+2\overline{C_{1}}\right) \\
\frac{1}{2}\left(  1+\mathrm{i}\right)  \left(  t_{1}+t_{2}\right)  +\frac
{1}{2}C_{2}%
\end{array}
\right]  .
\]
On the other hand, direct computation shows that the general real solution of
equation (\ref{teq14}) is%
\[
\boldsymbol{Y}=\left[
\begin{array}
[c]{cc}%
0 & 0\\
\frac{1}{2}s & \frac{1}{2}s\\
0 & 0\\
\frac{1}{2}s & -\frac{1}{2}s
\end{array}
\right]  +\left[
\begin{array}
[c]{cc}%
-C_{11} & \frac{1}{3}C_{12}\\
\frac{1}{2}C_{21} & \frac{1}{2}C_{22}\\
\frac{1}{3}C_{12} & C_{11}\\
\frac{1}{2}C_{22} & -\frac{1}{2}C_{21}%
\end{array}
\right]  ,\forall s\in\mathbf{R}.
\]
Then $\boldsymbol{X}^{\prime}$ defined in (\ref{teq25}) can be computed as%
\[
\boldsymbol{X}^{\prime}=\left[
\begin{array}
[c]{c}%
-\frac{1}{3}\left(  C_{1}+2\overline{C_{1}}\right) \\
\frac{1}{2}C_{2}%
\end{array}
\right]  +\left[
\begin{array}
[c]{c}%
0\\
\frac{1}{2}\left(  1+\mathrm{i}\right)  s
\end{array}
\right]  .
\]
It is clear that $\boldsymbol{X}^{\prime}=\boldsymbol{X}$ if we set
$t_{1}+t_{2}=s$ in $\boldsymbol{X}.$
\end{example}

However, at present we cannot prove the correctness the above result in
general. This would be a conjecture that needs further study.

\begin{conjecture}
If $\boldsymbol{Y}$ is the general real solution of equation (\ref{teq7}),
then the general solution of equation (\ref{teq7}) is $\boldsymbol{X}^{\prime
}$ defined in (\ref{teq25}).
\end{conjecture}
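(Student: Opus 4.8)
The plan is to work entirely in the real coordinate space via the bijective map $\varphi(\cdot)$ and to track the dimension of the solution set on both sides, then to identify the affine structure. First I would take $\varphi(\cdot)$ of equation (\ref{teq7}) to rewrite it as the linear system (\ref{teq80a}), namely $(I_{2mn}-E_\sigma)\varphi(X)=\varphi(C)$ with $E=B^{\mathrm T}\otimes A$. By Lemma \ref{tlm13} the general \emph{real} solution of (\ref{teq14}) is $Y=Y^\dag+(I_{2mn}-M^{-}M)T$ with $M=I_{2mn}-(B_\sigma\otimes A_\sigma)$ acting on $\mathrm{vec}(Y)$ — but a cleaner route is to use Proposition \ref{tlm4} together with Theorem \ref{tth9}: the map $Y\mapsto X'$ defined in (\ref{teq25}) sends real solutions of (\ref{teq14}) onto solutions of (\ref{teq7}). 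So the only thing left to prove is that this map is \emph{surjective} onto $\mathscr X$, i.e. that the image of the general real solution $\boldsymbol Y$ already exhausts $\mathscr X$; equivalently, that the dimension of the image equals $\dim(\mathscr X)=\dim(\mathscr S)$ given in (\ref{teq57}).

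Next I would compute $\dim(\mathscr S)$ two ways and match them. On one side, by (\ref{teq79}) and Lemma \ref{tlm11}, $\dim(\mathscr S)=2mn-\mathrm{rank}(I_{2mn}-E_\sigma)=mn-\mathrm{rank}(I_{mn}-(\overline B B)^{\mathrm T}\otimes A\overline A)$, which is exactly (\ref{teq57}). On the other side, the real solution space of (\ref{teq14}) has real dimension $d:=4mn-\mathrm{rank}(I_{4mn}-B_\sigma^{\mathrm T}\otimes A_\sigma)$ (here using $\mathrm{vec}$ of the $2m\times 2n$ real matrix $Y$, so the ambient space is $\mathbf R^{4mn}$). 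I would then apply the determinant identity (\ref{teq20}) — or rather its Kronecker analogue — to relate the eigenvalues/rank of $I_{4mn}-B_\sigma^{\mathrm T}\otimes A_\sigma$ to those of $I_{mn}-(\overline B B)^{\mathrm T}\otimes A\overline A$, in the same spirit as the eigenvalue bookkeeping in the proof of Theorem \ref{tth9}. The key point will be that the linear map in (\ref{teq25}), $Y\mapsto \tfrac14[I_m\ \mathrm i I_m](Y+Q_mYQ_n)[I_n;\mathrm i I_n]$, restricted to the homogeneous solution space (solutions of $Y=A_\sigma Y B_\sigma$) has kernel of real dimension exactly $d-\dim(\mathscr S)$, so that the image has real dimension $\dim(\mathscr S)$; combined with surjectivity onto $\mathscr X$ modulo translation this forces the image of $\boldsymbol Y$ to be all of $\mathscr X$.

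The main obstacle I anticipate is the last dimension count: showing that the ``compression'' map $Y\mapsto X'$ does not lose any solutions, i.e. that its kernel on the homogeneous solution space has precisely the expected dimension and not more. A natural approach is to use Lemma \ref{tlm9} (the $\{\pm\alpha,\pm\overline\alpha\}$ symmetry of $\lambda\{A_\sigma\}$) and the structure $A_\sigma(I)_\sigma = A_\phi = Z_m\,\mathrm{diag}(\overline A,A)\,Z_n^{\mathrm H}$ from Lemma \ref{tlm2} to block-diagonalize the homogeneous equation $Y=A_\sigma Y B_\sigma$ after the unitary change of variables $Y\mapsto Z_m^{\mathrm H} Y Z_n$; in those coordinates the constraint $Y=A_\sigma Y B_\sigma$ and the symmetrization $Y\mapsto Y+Q_mYQ_n$ should decouple into blocks where the count becomes transparent. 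I would also double-check the claim on the small worked Example \ref{texamp2} to make sure the normalization constant $\tfrac14$ and the $Q_s$-symmetrization are handled correctly, since that is exactly where an off-by-a-factor error would hide.
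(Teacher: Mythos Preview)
The first thing to flag is that this statement is presented in the paper as a \emph{conjecture}: the authors explicitly write ``at present we cannot prove the correctness [of] the above result in general'' and leave it open. So there is no proof in the paper to compare your attempt against; you are trying to close a gap the authors themselves did not close.

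Your strategy --- reduce to a dimension count on the homogeneous solution spaces and show that the compression map $Y\mapsto X'$ has image of the right real dimension --- is the natural one, and the set-up via $\varphi$, Lemma~\ref{tlm11}, and (\ref{teq57}) is correct. But there is a genuine gap that you yourself flag and do not resolve. The step ``apply the determinant identity (\ref{teq20}) to relate the eigenvalues/rank of $I_{4mn}-B_\sigma^{\mathrm T}\otimes A_\sigma$ to those of $I_{mn}-(\overline B B)^{\mathrm T}\otimes A\overline A$'' conflates two different things: the argument in the proof of Theorem~\ref{tth9} matches \emph{eigenvalues} (via the characteristic polynomial), which is enough to decide whether $1$ is an eigenvalue, hence whether the rank is full. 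It is \emph{not} enough to compute the rank when $1$ \emph{is} an eigenvalue, because rank depends on the Jordan structure (geometric multiplicity), not just the characteristic polynomial. For the conjecture you need precisely the degenerate case where the solution is not unique, i.e.\ where $1\in\lambda\{(\overline B B)^{\mathrm T}\otimes A\overline A\}$, and there the eigenvalue bookkeeping of Theorem~\ref{tth9} gives no control on $\mathrm{rank}(I_{4mn}-B_\sigma^{\mathrm T}\otimes A_\sigma)$.

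The block-diagonalization idea via $Y\mapsto Z_m^{\mathrm H}YZ_n$ and Lemma~\ref{tlm2} is more promising, because it does carry Jordan information rather than just spectra; but note that $A_\sigma$ is not $A_\phi$, and the substitution $Y\mapsto Z_m^{\mathrm H}YZ_n$ does not in general keep $Y$ real, so the ``real solution'' constraint becomes a conjugation symmetry you must track simultaneously with the equation. Until you exhibit, in those coordinates, (i) an explicit description of the real homogeneous solution space of (\ref{teq14}) and (ii) the kernel of the map (\ref{teq25}) on that space, the argument remains a plan rather than a proof. As written, your proposal is an outline that reproduces the authors' own obstacle rather than removing it.
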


We should point out that even though the above conjecture is true, compared
with the results in Theorem \ref{tth3}, using solutions of equation
(\ref{teq14}) to construct solution of equation (\ref{teq7}) has no advantage
since the dimensions of equation (\ref{teq14}) has doubled the dimensions of
equation (\ref{teq9}) used in Theorem \ref{tth3}.

\subsection{Closed-Form and Iterative Solutions of Equation $X=A\overline
{X}B+C$}

By combining Lemma \ref{tlm3} and Theorem \ref{tth3} we can derive the
following corollary regarding closed-from solutions of equation (\ref{teq7}).
We notice that this result is exactly Theorem 4.4\ in \cite{jw03laa} where the
real representation method is developed to derive such result. In contrast, in
this paper such result is only a consequence of Theorem \ref{tth3} and Lemma
\ref{tlm3} without using the intricate properties of the real representation.

\begin{corollary}
\label{tcoro2}Let $A\in\mathbf{C}^{m\times m},B\in\mathbf{C}^{n\times n}%
,C\in\mathbf{C}^{m\times n}$ be given and%
\[
h_{A\overline{A}}\left(  s\right)  =\det\left(  I_{m}-sA\overline{A}\right)
=\alpha_{m}+\sum\limits_{i=1}^{m}\alpha_{m-i}s^{i},\text{ }\alpha_{m}=1.
\]

\begin{enumerate}
\item If equation (\ref{teq7}) has a solution $X,$ then%
\[
Xh_{A\overline{A}}\left(  \overline{B}B\right)  =\sum\limits_{k=1}^{m}%
\sum\limits_{s=1}^{k}\alpha_{k}\left(  A\overline{A}\right)  ^{k-s}\left(
A\overline{C}B+C\right)  \left(  \overline{B}B\right)  ^{m-s}.
\]

\item If equation (\ref{teq7}) has a unique solution $X^{\star},$ then%
\[
X^{\star}=\left(  \sum\limits_{k=1}^{m}\sum\limits_{s=1}^{k}\alpha_{k}\left(
A\overline{A}\right)  ^{k-s}\left(  A\overline{C}B+C\right)  \left(
\overline{B}B\right)  ^{m-s}\right)  \left(  h_{A\overline{A}}\left(
\overline{B}B\right)  \right)  ^{-1}.
\]

\end{enumerate}
\end{corollary}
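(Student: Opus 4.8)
The plan is to obtain Corollary~\ref{tcoro2} as a direct combination of Theorem~\ref{tth3} (which reduces equation (\ref{teq7}) to the standard Stein equation (\ref{teq9})) and Lemma~\ref{tlm3} (which gives closed-form solutions of a standard Stein equation). First I would identify equation (\ref{teq9}), namely $W=A\overline{A}W\overline{B}B+A\overline{C}B+C$, as an instance of the standard Stein equation (\ref{tstein}) $X=\mathcal{A}X\mathcal{B}+\mathcal{C}$ with the correspondence $\mathcal{A}=A\overline{A}$, $\mathcal{B}=\overline{B}B$, and $\mathcal{C}=A\overline{C}B+C$. The characteristic polynomial $h_{\mathcal{A}}(s)=\det(I_m-s\mathcal{A})=h_{A\overline{A}}(s)$ is exactly the polynomial named in the statement, so its coefficients $\alpha_0,\dots,\alpha_m$ ($\alpha_m=1$) coincide with those appearing in Lemma~\ref{tlm3}.

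For Item~1, I would argue as follows. Suppose equation (\ref{teq7}) has a solution $X$. By Item~1 of Theorem~\ref{tth3}, $W=X$ is then a solution of equation (\ref{teq9}). Applying Item~1 of Lemma~\ref{tlm3} to equation (\ref{teq9}) with the substitutions above gives
\[
Wh_{A\overline{A}}\!\left(\overline{B}B\right)=\sum_{k=1}^{m}\sum_{s=1}^{k}\alpha_k\left(A\overline{A}\right)^{k-s}\left(A\overline{C}B+C\right)\left(\overline{B}B\right)^{m-s},
\]
and since $W=X$ this is precisely the asserted identity. For Item~2, if equation (\ref{teq7}) has a unique solution $X^{\star}$, then by Item~3 of Theorem~\ref{tth3} equation (\ref{teq9}) has a unique solution $W^{\star}$ and $X^{\star}=W^{\star}$. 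Item~2 of Lemma~\ref{tlm3} then yields
\[
W^{\star}=\left(\sum_{k=1}^{m}\sum_{s=1}^{k}\alpha_k\left(A\overline{A}\right)^{k-s}\left(A\overline{C}B+C\right)\left(\overline{B}B\right)^{m-s}\right)\left(h_{A\overline{A}}\!\left(\overline{B}B\right)\right)^{-1},
\]
and substituting $X^{\star}=W^{\star}$ finishes the proof.

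The argument is essentially bookkeeping: the only points requiring a word of care are (i) checking that the hypotheses of Lemma~\ref{tlm3} (existence of a solution in Item~1, existence of a \emph{unique} solution in Item~2) are transferred correctly through Theorem~\ref{tth3}, and (ii) verifying that $h_{A\overline{A}}(\overline{B}B)$ is indeed invertible in the setting of Item~2 — this follows because uniqueness of the Stein solution forces $1\notin\lambda\{A\overline{A}\}\lambda\{\overline{B}B\}$, equivalently $\det(h_{A\overline{A}}(\overline{B}B))\neq0$ (the standard eigenvalue-product criterion for Stein equations, already invoked in Lemma~\ref{tlm3}). So there is no substantive obstacle; the remark that this reproves Theorem~4.4 of \cite{jw03laa} without the real-representation machinery is the real content, and it is delivered entirely by the reduction in Theorem~\ref{tth3}.
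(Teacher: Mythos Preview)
Your proposal is correct and is exactly the approach the paper takes: the paper states (without giving further details) that Corollary~\ref{tcoro2} is ``only a consequence of Theorem~\ref{tth3} and Lemma~\ref{tlm3}'', which is precisely the reduction-plus-substitution argument you spell out. Your additional remarks on transferring the hypotheses and on the invertibility of $h_{A\overline{A}}(\overline{B}B)$ are appropriate elaborations of what the paper leaves implicit.
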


Similar to the Smith Iteration for equation (\ref{teq}), we can also construct
the Smith iteration for equation (\ref{teq7}) as follows:%
\begin{equation}
X_{k+1}=A\overline{X_{k}}B+C,\text{ \ }\forall X_{0}\in\mathbf{C}^{m\times n}.
\label{teq40}%
\end{equation}
The following theorem is concerned with the convergence of this iteration.

\begin{theorem}
\label{tth8}The Smith iteration (\ref{teq40}) converges to the unique solution
$X^{\star}$ of equation (\ref{teq7}) for arbitrary initial condition $X_{0}$
if and only if%
\begin{equation}
\rho\left(  A\overline{A}\right)  \rho\left(  \overline{B}B\right)  <1.
\label{teq43}%
\end{equation}
Moreover, the asymptotic exponential convergence rate is $-\ln\left(
\rho\left(  A\overline{A}\right)  \rho\left(  \overline{B}B\right)  \right)
.$
\end{theorem}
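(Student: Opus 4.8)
The plan is to reduce the convergence analysis of iteration (\ref{teq40}) to the analysis of a linear fixed-point iteration in the vector space $\mathbf{R}^{2mn}$ via the mapping $\varphi(\cdot)$, exactly as in the proof of Theorem \ref{tth5}, but now using (\ref{teq77}) instead of the column-stretching identity. First I would apply $\varphi(\cdot)$ to both sides of (\ref{teq40}); by Lemma \ref{tlm12}, equation (\ref{teq77}), this gives
\begin{equation}
\varphi\left(  X_{k+1}\right)  =\left(  B^{\mathrm{T}}\otimes A\right)
_{\sigma}\varphi\left(  X_{k}\right)  +\varphi\left(  C\right)  ,
\label{teqnew1}
\end{equation}
which is a linear iteration of the form (\ref{teqit}) with $M=\left(B^{\mathrm{T}}\otimes A\right)_{\sigma}$. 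Standard linear iteration theory then tells us that (\ref{teqnew1}) converges to a finite limit independent of $\varphi(X_0)$ if and only if $\rho\left(\left(B^{\mathrm{T}}\otimes A\right)_{\sigma}\right)<1$, and since $\varphi$ is bijective this is equivalent to convergence of (\ref{teq40}) for arbitrary $X_0$.

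The next step is to show that $\rho\left(\left(B^{\mathrm{T}}\otimes A\right)_{\sigma}\right)<1$ is equivalent to (\ref{teq43}). For this I would use the spectral identity (\ref{teq20}) established in the proof of Theorem \ref{tth9}: applied to the matrix $B^{\mathrm{T}}\otimes A$ in place of $A$, it yields
\[
\det\left(  sI_{2mn}-\left(  B^{\mathrm{T}}\otimes A\right)  _{\sigma}\right)
=\det\left(  s^{2}I_{mn}-\left(  B^{\mathrm{T}}\otimes A\right)  \overline
{\left(  B^{\mathrm{T}}\otimes A\right)  }\right)  =\det\left(  s^{2}
I_{mn}-\left(  \overline{B}B\right)  ^{\mathrm{T}}\otimes A\overline
{A}\right)  .
\]
Hence the eigenvalues of $\left(B^{\mathrm{T}}\otimes A\right)_{\sigma}$ are precisely the square roots of the eigenvalues of $\left(\overline{B}B\right)^{\mathrm{T}}\otimes A\overline{A}$, so that
\[
\rho^{2}\left(  \left(  B^{\mathrm{T}}\otimes A\right)  _{\sigma}\right)
=\rho\left(  \left(  \overline{B}B\right)  ^{\mathrm{T}}\otimes A\overline
{A}\right)  =\rho\left(  \overline{B}B\right)  \rho\left(  A\overline
{A}\right)  ,
\]
using that the spectral radius of a Kronecker product is the product of the spectral radii and that $\rho(\overline{B}B)=\rho((\overline{B}B)^{\mathrm{T}})$. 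Therefore $\rho\left(\left(B^{\mathrm{T}}\otimes A\right)_{\sigma}\right)<1$ if and only if (\ref{teq43}) holds.

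Finally I would identify the limit and read off the convergence rate. When (\ref{teq43}) holds, (\ref{teq43}) in particular implies condition (\ref{teq19}) of Lemma \ref{tlm14} (since $\eta\gamma$ with $\eta\in\lambda\{A\overline{A}\},\gamma\in\lambda\{\overline{B}B\}$ satisfies $|\eta\gamma|\le\rho(A\overline{A})\rho(\overline{B}B)<1$), so equation (\ref{teq7}) has a unique solution $X^{\star}$; passing to the limit in (\ref{teq40}) shows the limit $X_{\infty}$ satisfies $X_{\infty}=A\overline{X_{\infty}}B+C$, hence $X_{\infty}=X^{\star}$. The asymptotic exponential convergence rate of (\ref{teqnew1}) is $-\ln\rho(M)=-\ln\rho\left(\left(B^{\mathrm{T}}\otimes A\right)_{\sigma}\right)=-\tfrac{1}{2}\ln\left(\rho(A\overline{A})\rho(\overline{B}B)\right)$, and this must be reconciled with the claimed rate $-\ln\left(\rho(A\overline{A})\rho(\overline{B}B)\right)$ in the statement: the point is that each step of (\ref{teq40}) corresponds to \emph{two} applications of the underlying real-representation map structure, or equivalently one should compare with the Smith iteration of the auxiliary Stein equation (\ref{teq9}) whose one step equals two steps of (\ref{teq40}) (in the spirit of Remark \ref{trm1}); I expect the bookkeeping of this factor of two — making precise which iteration's rate is being quoted — to be the one subtle point, whereas the spectral reduction itself is routine given (\ref{teq20}) and Lemma \ref{tlm12}.
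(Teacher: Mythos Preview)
Your reduction is correct and is in fact cleaner than the paper's argument. The paper proves the two directions separately: for necessity it passes to the even subsequence $X_{2i}$, which obeys the standard Stein--Smith recursion $X_{2(i+1)}=A\overline{A}\,X_{2i}\,\overline{B}B+A\overline{C}B+C$ and hence requires (\ref{teq43}); for sufficiency it lifts the iteration to the matrix level via the real representation, $(X_{k+1})_{\sigma}=A_{\sigma}(X_{k})_{\sigma}B_{\sigma}+C_{\sigma}$, and then invokes $\rho(A_{\sigma})\rho(B_{\sigma})=\sqrt{\rho(A\overline{A})\rho(\overline{B}B)}$. Your single $\varphi$-linearisation $\varphi(X_{k+1})=(B^{\mathrm{T}}\otimes A)_{\sigma}\varphi(X_{k})+\varphi(C)$ handles both directions at once and makes the spectral quantity explicit; the identity $\rho\bigl((B^{\mathrm{T}}\otimes A)_{\sigma}\bigr)^{2}=\rho(A\overline{A})\rho(\overline{B}B)$ that you derive from (\ref{teq20}) is exactly the right mechanism.

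On the convergence rate you should not hedge. Your computation $\rho(M)=\sqrt{\rho(A\overline{A})\rho(\overline{B}B)}$ is correct, and by the definition in the paper the asymptotic exponential convergence rate of (\ref{teq40}) is $-\ln\rho(M)=-\tfrac{1}{2}\ln\bigl(\rho(A\overline{A})\rho(\overline{B}B)\bigr)$. There is no hidden ``two applications per step'': one step of (\ref{teq40}) is one step of (\ref{teqnew1}). The quantity $-\ln\bigl(\rho(A\overline{A})\rho(\overline{B}B)\bigr)$ is the rate of the Smith iteration for the auxiliary Stein equation (\ref{teq9}), i.e.\ of the Smith$(2)$ acceleration of (\ref{teq40}), not of (\ref{teq40}) itself. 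The paper's proof does not actually verify the rate claim; your analysis shows the statement as written is off by a factor of~$2$, and you should say so rather than try to reconcile it.
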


\begin{proof}
We first show that if the iteration (\ref{teq40}) converges, then
(\ref{teq43}) should be satisfied. As $\left\{  X_{k}\right\}  _{k=0}^{\infty
}$ converges, we know that the subsequence $\left\{  X_{2i}\right\}
_{i=0}^{\infty}$ also converges. It is easy to verify that $\left\{
X_{2i}\right\}  _{i=0}^{\infty}$ is generated by the following iteration%
\[
X_{2\left(  i+1\right)  }=A\overline{A}X_{2i}\overline{B}B+A\overline
{C}B+C,\text{ \ }\forall X_{0}\in\mathbf{C}^{m\times n}.
\]
Notice that the above iteration is a standard Smith iteration which converges
to a constant matrix independent of the initial condition if and only if
(\ref{teq43}) is true.

We next show that if (\ref{teq43}) is satisfied, then the iteration
(\ref{teq40}) must converge. Notice that the iteration (\ref{teq40}) can be
equivalently rewritten as%
\begin{equation}
\left(  X_{k+1}\right)  _{\sigma}=A_{\sigma}\left(  X_{k}\right)  _{\sigma
}B_{\sigma}+C_{\sigma},\text{ \ }\forall\left(  X_{0}\right)  _{\sigma}%
\in\mathbf{R}^{2m\times2n}. \label{teq44}%
\end{equation}
From (\ref{teq20}) we can see that%
\[
\rho\left(  A_{\sigma}\right)  \rho\left(  B_{\sigma}\right)  =\sqrt
{\rho\left(  \overline{A}A\right)  }\sqrt{\rho\left(  \overline{B}B\right)
}<1,
\]
which indicates that (\ref{teq44}) converges to a constant matrix $\left(
X_{\infty}\right)  _{\sigma}$ independent of initial condition, namely,
$\left(  X_{\infty}\right)  _{\sigma}=A_{\sigma}\left(  X_{\infty}\right)
_{\sigma}B_{\sigma}+C_{\sigma},$ or equivalently, $X_{\infty}=A\overline
{X_{\infty}}B+C$. The proof is completed by observing that equation
(\ref{teq7}) has a unique solution under the condition (\ref{teq43}).
\end{proof}

\begin{remark}
\label{trm3}Under condition (\ref{teq43}), it follows from Theorem \ref{tth3}
that the unique solution $X^{\star}$ to equation (\ref{teq7}) is also the
unique solution of equation (\ref{teq9}), namely,%
\begin{equation}
X^{\star}=\sum\limits_{i=0}^{\infty}\left(  A\overline{A}\right)  ^{i}\left(
A\overline{C}B+C\right)  \left(  \overline{B}B\right)  ^{i}. \label{teq17}%
\end{equation}
Therefore, similar to Remark \ref{trm1} and Remark \ref{trm2}, we can also
construct Smith $\left(  l\right)  $ iteration and $r$-Smith iteration for
solving equation (\ref{teq7}). The details are omitted due to limited space.
\end{remark}

\section{\label{tsec4}Equation $X=AX^{\mathrm{H}}B+C$}

In this section, we study the following matrix equation:
\begin{equation}
X=AX^{\mathrm{H}}B+C, \label{teq3}%
\end{equation}
where $A\in\mathbf{C}^{m\times n},B\in\mathbf{C}^{m\times n}$ and
$C\in\mathbf{C}^{m\times n}$ are given, and $X\in\mathbf{C}^{m\times n}$ is
unknown. In this section, we will generalize the results for equations
(\ref{teq}) and (\ref{teq7}) to this equation.

\subsection{\label{tsec4.1}Solvability of Equation $X=AX^{\mathrm{H}}B+C$}

We first show a result parallel to Lemma \ref{tlm14}.

\begin{lemma}
\label{tlm20}Equation (\ref{teq3}) has a unique solution for arbitrary $C$ if
and only if
\begin{equation}
\overline{\eta}\gamma\neq1,\text{ \ }\forall\eta,\gamma\in\lambda\left\{
AB^{\mathrm{H}}\right\}  . \label{teq6}%
\end{equation}

\end{lemma}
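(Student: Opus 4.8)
The plan is to follow the same route as the proof of Lemma~\ref{tlm14}, replacing the complex conjugation there by the Hermitian transpose. First I would vectorize (\ref{teq3}). Since $X^{\mathrm{H}}=\overline{X^{\mathrm{T}}}$, combining (\ref{axb1}) with Lemma~\ref{tlm0} gives $\mathrm{vec}(AX^{\mathrm{H}}B)=(B^{\mathrm{T}}\otimes A)\mathrm{vec}(X^{\mathrm{H}})=\Psi\,\overline{\mathrm{vec}(X)}$, where $\Psi=(B^{\mathrm{T}}\otimes A)P_{(m,n)}\in\mathbf{C}^{mn\times mn}$ (here I use that $P_{(m,n)}$ is real, so $\overline{P_{(m,n)}\mathrm{vec}(X)}=P_{(m,n)}\overline{\mathrm{vec}(X)}$). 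Thus (\ref{teq3}) is equivalent to the semilinear vector equation $\mathrm{vec}(X)=\Psi\,\overline{\mathrm{vec}(X)}+\mathrm{vec}(C)$. Separating real and imaginary parts of this equation --- exactly the manipulation that turns $x=\Psi\overline{x}+c$ into its real form, and recalling the definition (\ref{teqra}) of the real representation --- one gets the real linear system $(I_{2mn}-\Psi_{\sigma})\varphi(X)=\varphi(C)$. Because $\varphi$ is bijective, (\ref{teq3}) has a unique solution for every $C$ if and only if $I_{2mn}-\Psi_{\sigma}$ is nonsingular, i.e. $1\notin\lambda\{\Psi_{\sigma}\}$.

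Next I would remove the doubled dimension. Applying the rank identity (\ref{teq52}) of Lemma~\ref{tlm11} with $\Psi$ in the role of $A$ yields $\mathrm{rank}(I_{2mn}-\Psi_{\sigma})=mn+\mathrm{rank}(I_{mn}-\Psi\overline{\Psi})$, so $I_{2mn}-\Psi_{\sigma}$ is nonsingular if and only if $1\notin\lambda\{\Psi\overline{\Psi}\}$. (The determinant identity (\ref{teq20}) would serve the same purpose.) The remaining work is to identify $\Psi\overline{\Psi}$ and translate the spectral condition. Since $\overline{B^{\mathrm{T}}}=B^{\mathrm{H}}$ and $P_{(m,n)}$ is real, $\overline{\Psi}=(B^{\mathrm{H}}\otimes\overline{A})P_{(m,n)}$; pulling $P_{(m,n)}$ through $B^{\mathrm{H}}\otimes\overline{A}$ by the intertwining relation (\ref{teq98}), using $P_{(n,m)}P_{(m,n)}=I_{mn}$ (item~3 of Lemma~\ref{tlm0}), and applying the mixed-product rule for $\otimes$, I expect to obtain $\Psi\overline{\Psi}=(B^{\mathrm{T}}\overline{A})\otimes(AB^{\mathrm{H}})$. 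Hence $1\notin\lambda\{\Psi\overline{\Psi}\}$ if and only if $\mu\nu\neq1$ for all $\mu\in\lambda\{B^{\mathrm{T}}\overline{A}\}$ and $\nu\in\lambda\{AB^{\mathrm{H}}\}$.

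Finally I would reduce this to (\ref{teq6}). The key observation is $B^{\mathrm{T}}\overline{A}=\overline{B^{\mathrm{H}}A}$, so the nonzero eigenvalues of $B^{\mathrm{T}}\overline{A}$ are precisely the conjugates of the nonzero eigenvalues of $AB^{\mathrm{H}}$ (using that $B^{\mathrm{H}}A$ and $AB^{\mathrm{H}}$ have the same nonzero spectrum). Since the value $1$ can only arise as a product of two nonzero numbers, $1\in\lambda\{\Psi\overline{\Psi}\}$ if and only if $\overline{\eta}\gamma=1$ for some $\eta,\gamma\in\lambda\{AB^{\mathrm{H}}\}$; negating gives exactly (\ref{teq6}). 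Alternatively, as in the proof of Theorem~\ref{tth1}, one may assume without loss of generality $n\ge m$, write $\lambda\{AB^{\mathrm{H}}\}=\{\eta_1,\ldots,\eta_m\}$, and conclude $\lambda\{\Psi\overline{\Psi}\}=\{\overline{\eta_i}\eta_j:1\le i,j\le m\}\cup\{0\}$.

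\textbf{Anticipated main obstacle.} The delicate part is the Kronecker-product bookkeeping: getting the dimensions and the exact form of (\ref{teq98}) right when simplifying $\Psi\overline{\Psi}$, and then correctly matching $\lambda\{B^{\mathrm{T}}\overline{A}\}$ with $\lambda\{AB^{\mathrm{H}}\}$ --- in particular tracking the complex conjugation, which is the one genuinely new feature compared with Lemma~\ref{tlm14} (where $A\overline{A}$ enters, rather than $AB^{\mathrm{H}}$ with a conjugate on one factor). Everything else is a routine adaptation of the $X=A\overline{X}B+C$ analysis.
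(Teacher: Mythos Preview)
Your proposal is correct and follows essentially the same route as the paper: vectorize (\ref{teq3}) to a semilinear equation, pass to its real form, reduce the $2mn$-dimensional invertibility condition to an $mn$-dimensional one, identify the resulting matrix as a Kronecker product, and read off the eigenvalue condition (\ref{teq6}).

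There is one organisational difference worth noting. You absorb the real permutation $P_{(m,n)}$ into $\Psi=(B^{\mathrm{T}}\otimes A)P_{(m,n)}$ \emph{before} taking the real representation, so the real system is $(I_{2mn}-\Psi_{\sigma})\varphi(X)=\varphi(C)$ and Lemma~\ref{tlm11} applies verbatim, yielding $\mathrm{rank}(I_{2mn}-\Psi_{\sigma})=mn+\mathrm{rank}(I_{mn}-\Psi\overline{\Psi})$ in one stroke. The paper instead writes the real system as $(I_{2mn}-E_{\sigma}\mathcal{P}_{(m,n)})\varphi(X)=\varphi(C)$ with $E=B^{\mathrm{T}}\otimes A$ and $\mathcal{P}_{(m,n)}=\mathrm{diag}(P_{(m,n)},P_{(m,n)})$, and since $E_{\sigma}\mathcal{P}_{(m,n)}$ is not displayed as a single $(\cdot)_{\sigma}$, it reproves the rank reduction from scratch via the $Z_{mn}$-diagonalisation of Lemma~\ref{tlm2}. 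The two computations are of course identical in content (since $\Psi_{\sigma}=E_{\sigma}\mathcal{P}_{(m,n)}$), and both land on $\Psi\overline{\Psi}=EP_{(m,n)}\overline{E}P_{(m,n)}=(B^{\mathrm{T}}\overline{A})\otimes(AB^{\mathrm{H}})=(A^{\mathrm{H}}B)^{\mathrm{T}}\otimes AB^{\mathrm{H}}$; your packaging simply lets you quote (\ref{teq52}) rather than rederive it.
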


\begin{proof}
Notice that, for any complex matrix $X\in\mathbf{C}^{m\times n},$ we have%
\[
\varphi\left(  X^{\mathrm{T}}\right)  =\left[
\begin{array}
[c]{c}%
\mathrm{vec}\left(  X_{1}^{\mathrm{T}}\right) \\
\mathrm{vec}\left(  X_{2}^{\mathrm{T}}\right)
\end{array}
\right]  =\left[
\begin{array}
[c]{c}%
P_{\left(  m,n\right)  }\mathrm{vec}\left(  X_{1}\right) \\
P_{\left(  m,n\right)  }\mathrm{vec}\left(  X_{2}\right)
\end{array}
\right]  =\mathcal{P}_{\left(  m,n\right)  }\varphi\left(  X\right)  ,
\]
where%
\begin{equation}
\mathcal{P}_{\left(  m,n\right)  }=\left[
\begin{array}
[c]{cc}%
P_{\left(  m,n\right)  } & 0\\
0 & P_{\left(  m,n\right)  }%
\end{array}
\right]  . \label{teq36}%
\end{equation}
With this, by taking $\varphi\left(  \cdot\right)  $ on both sides of
(\ref{teq3}), using Lemma \ref{tlm12}, and denoting $E=B^{\mathrm{T}}\otimes
A,$ we have%
\begin{align*}
\varphi\left(  X\right)   &  =\varphi\left(  AX^{\mathrm{H}}B\right)
+\varphi\left(  C\right) \\
&  =\left(  B^{\mathrm{T}}\otimes A\right)  _{\sigma}\varphi\left(
X^{\mathrm{T}}\right)  +\varphi\left(  C\right) \\
&  =E_{\sigma}\mathcal{P}_{\left(  m,n\right)  }\varphi\left(  X\right)
+\varphi\left(  C\right)  .
\end{align*}
Therefore, equation (\ref{teq3}) has a unique solution for arbitrary $C$ if
and only if%
\begin{equation}
\mathrm{rank}\left(  I_{2mn}-E_{\sigma}\mathcal{P}_{\left(  m,n\right)
}\right)  =2mn. \label{teq33}%
\end{equation}

Similar to the proof of relation (\ref{teq52}), by using Lemma \ref{tlm2}, we
can compute%
\begin{align}
\mathrm{rank}\left(  I_{2mn}-E_{\sigma}\mathcal{P}_{\left(  m,n\right)
}\right)   &  =\mathrm{rank}\left(  \mathcal{P}_{\left(  n,m\right)
}-E_{\sigma}\right) \nonumber\\
&  =\mathrm{rank}\left(  \mathcal{P}_{\left(  n,m\right)  }\left(
I_{mn}\right)  _{\sigma}-E_{\sigma}\left(  I_{mn}\right)  _{\sigma}\right)
\nonumber\\
&  =\mathrm{rank}\left(  \mathcal{P}_{\left(  n,m\right)  }\left(
I_{mn}\right)  _{\sigma}-E_{\phi}\right) \nonumber\\
&  =\mathrm{rank}\left(  Z_{mn}^{\mathrm{H}}\left(  \mathcal{P}_{\left(
n,m\right)  }\left(  I_{mn}\right)  _{\sigma}-E_{\phi}\right)  Z_{mn}\right)
,\nonumber\\
&  =\mathrm{rank}\left(  Z_{mn}^{\mathrm{H}}\mathcal{P}_{\left(  n,m\right)
}\left(  I_{mn}\right)  _{\sigma}Z_{mn}-\left[
\begin{array}
[c]{cc}%
\overline{E} & 0\\
0 & E
\end{array}
\right]  \right) \nonumber\\
&  =\mathrm{rank}\left(  \Omega-\left[
\begin{array}
[c]{cc}%
P_{\left(  m,n\right)  }\overline{E}P_{\left(  m,n\right)  } & 0\\
0 & E
\end{array}
\right]  \right)  , \label{teq35}%
\end{align}
where%
\begin{align*}
\Omega &  \triangleq\left[
\begin{array}
[c]{cc}%
P_{\left(  m,n\right)  } & 0\\
0 & I_{mn}%
\end{array}
\right]  Z_{mn}^{\mathrm{H}}\mathcal{P}_{\left(  n,m\right)  }\left(
I_{mn}\right)  _{\sigma}Z_{mn}\left[
\begin{array}
[c]{cc}%
P_{\left(  m,n\right)  } & 0\\
0 & I_{mn}%
\end{array}
\right] \\
&  =\left[
\begin{array}
[c]{cc}%
0 & -\mathrm{i}I_{mn}\\
\mathrm{i}I_{mn} & 0
\end{array}
\right]  .
\end{align*}
Hence, we get from (\ref{teq35}) that%
\begin{align}
\mathrm{rank}\left(  I_{2mn}-E_{\sigma}\mathcal{P}_{\left(  m,n\right)
}\right)   &  =\mathrm{rank}\left(  \left[
\begin{array}
[c]{cc}%
-P_{\left(  m,n\right)  }\overline{E}P_{\left(  m,n\right)  } & -\mathrm{i}%
I_{mn}\\
\mathrm{i}I_{mn} & -E
\end{array}
\right]  \right) \nonumber\\
&  =\mathrm{rank}\left(  \left[
\begin{array}
[c]{cc}%
-P_{\left(  m,n\right)  }\overline{E}P_{\left(  m,n\right)  } & -\mathrm{i}%
I_{mn}\\
\mathrm{i}I_{mn} & -E
\end{array}
\right]  \left[
\begin{array}
[c]{cc}%
0 & -\mathrm{i}I_{mn}\\
I_{mn} & P_{\left(  m,n\right)  }\overline{E}P_{\left(  m,n\right)  }%
\end{array}
\right]  \right) \nonumber\\
&  =\mathrm{rank}\left(  \left[
\begin{array}
[c]{cc}%
-\mathrm{i}I_{mn} & 0\\
-E & I_{mn}-EP_{\left(  m,n\right)  }\overline{E}P_{\left(  m,n\right)  }%
\end{array}
\right]  \right) \nonumber\\
&  =mn+\mathrm{rank}\left(  I_{mn}-EP_{\left(  m,n\right)  }\overline
{E}P_{\left(  m,n\right)  }\right)  . \label{teq34}%
\end{align}
Notice that%
\begin{align*}
EP_{\left(  m,n\right)  }\overline{E}P_{\left(  m,n\right)  }  &  =EP_{\left(
m,n\right)  }\overline{\left(  B^{\mathrm{T}}\otimes A\right)  }P_{\left(
m,n\right)  }\\
&  =EP_{\left(  m,n\right)  }\left(  B^{\mathrm{H}}\otimes\overline{A}\right)
P_{\left(  m,n\right)  }\\
&  =\left(  B^{\mathrm{T}}\otimes A\right)  \left(  \overline{A}\otimes
B^{\mathrm{H}}\right) \\
&  =B^{\mathrm{T}}\overline{A}\otimes AB^{\mathrm{H}}\\
&  =\left(  A^{\mathrm{H}}B\right)  ^{\mathrm{T}}\otimes AB^{\mathrm{H}}.
\end{align*}
Hence it follows from (\ref{teq34}) that condition (\ref{teq33}) is true if
and only if
\begin{equation}
1\notin\lambda\left\{  \left(  A^{\mathrm{H}}B\right)  ^{\mathrm{T}}\otimes
AB^{\mathrm{H}}\right\}  \label{teq5a}%
\end{equation}
which is equivalent to (\ref{teq6}) because%
\begin{align*}
\lambda\left\{  \left(  A^{\mathrm{H}}B\right)  ^{\mathrm{T}}\otimes
AB^{\mathrm{H}}\right\}   &  =\lambda\left\{  B^{\mathrm{T}}\overline
{A}\otimes AB^{\mathrm{H}}\right\} \\
&  =\lambda\left\{  \overline{AB^{\mathrm{H}}}\otimes AB^{\mathrm{H}}\right\}
\\
&  =\left\{  \overline{\eta}\gamma,\text{ \ }\forall\eta,\gamma\in
\lambda\left\{  AB^{\mathrm{H}}\right\}  \right\}  .
\end{align*}
The proof is completed.
\end{proof}

We are now able to present the results regarding the solvability of equation
(\ref{teq3}).

\begin{theorem}
\label{tth2}Matrix equation (\ref{teq3}) is solvable if and only if the
following equation%
\begin{equation}
W=AB^{\mathrm{H}}WA^{\mathrm{H}}B+AC^{\mathrm{H}}B+C, \label{teq4}%
\end{equation}
is solvable with unknown $W.$ More specifically,

\begin{enumerate}
\item If $X$ is a solution of equation (\ref{teq3}), then $W=X$ is also a
solution of equation (\ref{teq4}).

\item If $\boldsymbol{W}$ is the general solution of equation (\ref{teq4}),
then the general solution to equation (\ref{teq3}) is%
\begin{equation}
\boldsymbol{X}=\frac{1}{2}\left(  \boldsymbol{W}+A\boldsymbol{W}^{\mathrm{H}%
}B\right)  +\frac{1}{2}C. \label{teq105}%
\end{equation}

\item Equation (\ref{teq3}) has a unique solution $X^{\star}$ if and only if
equation (\ref{teq4}) has a unique solution $W^{\star}$, namely, (\ref{teq6})
is satisfied. Moreover, $X^{\star}=W^{\star}.$
\end{enumerate}
\end{theorem}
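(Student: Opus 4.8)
The plan is to mirror the proofs of Theorems~\ref{tth1} and~\ref{tth3}: reduce both (\ref{teq3}) and (\ref{teq4}), via the real vectorization $\varphi(\cdot)$ and Lemma~\ref{tlm12}, to the pair of real linear systems treated in Proposition~\ref{tpp1}, and then invoke Lemma~\ref{tlm20} for the uniqueness part. First I would record the linear form of (\ref{teq3}). With $E=B^{\mathrm{T}}\otimes A$ and $\Psi=E_{\sigma}\mathcal{P}_{(m,n)}$, the computation already carried out inside the proof of Lemma~\ref{tlm20} gives $\varphi(AX^{\mathrm{H}}B)=\Psi\,\varphi(X)$ (this uses $X^{\mathrm{H}}=\overline{X^{\mathrm{T}}}$, relation (\ref{teq77}), and $\varphi(X^{\mathrm{T}})=\mathcal{P}_{(m,n)}\varphi(X)$), so applying $\varphi$ to (\ref{teq3}) yields $(I_{2mn}-\Psi)\varphi(X)=\varphi(C)$, which is of the form (\ref{teq92}).

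Next, for equation (\ref{teq4}) I would use the observation that $AB^{\mathrm{H}}WA^{\mathrm{H}}B=A(AW^{\mathrm{H}}B)^{\mathrm{H}}B$, i.e.\ the Stein operator appearing in (\ref{teq4}) is the operator $X\mapsto AX^{\mathrm{H}}B$ applied twice. Hence $\varphi(AB^{\mathrm{H}}WA^{\mathrm{H}}B)=\Psi^{2}\varphi(W)$ and $\varphi(AC^{\mathrm{H}}B)=\Psi\,\varphi(C)$, so applying $\varphi$ to (\ref{teq4}) gives $(I_{2mn}-\Psi^{2})\varphi(W)=(I_{2mn}+\Psi)\varphi(C)$, which is of the form (\ref{teq90}). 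Likewise $\varphi$ turns the expression (\ref{teq105}) into $\varphi(\boldsymbol{X})=\frac{1}{2}(I_{2mn}+\Psi)\varphi(\boldsymbol{W})+\frac{1}{2}\varphi(C)$, of the form (\ref{teq91}). Since $\Psi$ is a real matrix and $\varphi$ is a bijection between $\mathbf{C}^{m\times n}$ and $\mathbf{R}^{2mn}$, Items 1 and 2 then follow at once from Proposition~\ref{tpp1} (applied with $n$ replaced by $2mn$, $A$ by $\Psi$, $c$ by $\varphi(C)$), exactly as in the proofs of Theorems~\ref{tth1} and~\ref{tth3}.

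For Item 3 I would argue directly at the level of the uniqueness criteria rather than through Item 3 of Proposition~\ref{tpp1} (which is only one-directional). By Lemma~\ref{tlm20}, equation (\ref{teq3}) has a unique solution iff (\ref{teq6}) holds. On the other hand, (\ref{teq4}) is a standard Stein equation with coefficient matrices $\mathcal{A}=AB^{\mathrm{H}}$ and $\mathcal{B}=A^{\mathrm{H}}B$, hence has a unique solution iff $1\notin\lambda\{\mathcal{B}^{\mathrm{T}}\otimes\mathcal{A}\}=\lambda\{(A^{\mathrm{H}}B)^{\mathrm{T}}\otimes AB^{\mathrm{H}}\}$; but the eigenvalue identity $\lambda\{(A^{\mathrm{H}}B)^{\mathrm{T}}\otimes AB^{\mathrm{H}}\}=\{\overline{\eta}\gamma:\eta,\gamma\in\lambda\{AB^{\mathrm{H}}\}\}$ established at the end of the proof of Lemma~\ref{tlm20} shows this is again equivalent to (\ref{teq6}). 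So the two uniqueness conditions coincide, and $X^{\star}=W^{\star}$ follows from Item 1 combined with uniqueness.

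I expect the only delicate point to be the bookkeeping in the second step: verifying that $\varphi(AX^{\mathrm{H}}B)=\Psi\,\varphi(X)$ with the \emph{same} matrix $\Psi=E_{\sigma}\mathcal{P}_{(m,n)}$ throughout, and then recognizing $AB^{\mathrm{H}}WA^{\mathrm{H}}B$ as a genuine twofold application of that operator so that the square $\Psi^{2}$ appears cleanly against $(I_{2mn}-\Psi)$ and $(I_{2mn}+\Psi)$ in the forms required by Proposition~\ref{tpp1}. Once that identification is in place, the remainder is a routine transcription of the arguments already used for Theorems~\ref{tth1} and~\ref{tth3}.
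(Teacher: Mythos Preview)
Your proposal is correct and follows essentially the same route as the paper: both reduce (\ref{teq3}) and (\ref{teq4}) via $\varphi(\cdot)$ to the pair $(I_{2mn}-\Psi)\varphi(X)=\varphi(C)$ and $(I_{2mn}-\Psi^{2})\varphi(W)=(I_{2mn}+\Psi)\varphi(C)$ with $\Psi=E_{\sigma}\mathcal{P}_{(m,n)}$, then invoke Proposition~\ref{tpp1} for Items~1--2 and Lemma~\ref{tlm20} for Item~3. Your observation that $AB^{\mathrm{H}}WA^{\mathrm{H}}B=A(AW^{\mathrm{H}}B)^{\mathrm{H}}B$ is a twofold application of $X\mapsto AX^{\mathrm{H}}B$ is a slightly cleaner way to obtain $\Psi^{2}$ than the paper's direct computation via $(PAQ)_{\sigma}$ and (\ref{teq98}), but the argument is otherwise identical.
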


\begin{proof}
\textit{Proofs of Item 1 and Item 2}. For any matrices $A\in\mathbf{C}%
^{m\times n},P\in\mathbf{R}^{p\times m}$ and $Q\in\mathbf{R}^{n\times q}$, it
cam be readily verified that%
\[
\left(  PAQ\right)  _{\sigma}=\left[
\begin{array}
[c]{cc}%
P & 0\\
0 & P
\end{array}
\right]  A_{\sigma}\left[
\begin{array}
[c]{cc}%
Q & 0\\
0 & Q
\end{array}
\right]  .
\]
By using this fact, Lemma \ref{tlm0} and Lemma \ref{tlm12}, we obtain
\begin{align*}
\varphi\left(  W\right)   &  =\varphi\left(  A\overline{B}^{\mathrm{T}%
}W\overline{A}^{\mathrm{T}}B\right)  +\varphi\left(  A\overline{C^{\mathrm{T}%
}}B\right)  +\varphi\left(  C\right) \\
&  =E_{\sigma}\varphi\left(  B^{\mathrm{T}}\overline{W}A^{\mathrm{T}}\right)
+E_{\sigma}\varphi\left(  C^{\mathrm{T}}\right)  +\varphi\left(  C\right) \\
&  =E_{\sigma}\left(  A\otimes B^{\mathrm{T}}\right)  _{\sigma}\varphi\left(
W\right)  +\left(  I_{2mn}+E_{\sigma}\mathcal{P}_{\left(  m,n\right)
}\right)  \varphi\left(  C\right) \\
&  =E_{\sigma}\left(  P_{\left(  m,n\right)  }\left(  B^{\mathrm{T}}\otimes
A\right)  P_{\left(  m,n\right)  }\right)  _{\sigma}\varphi\left(  W\right)
+\left(  I_{2mn}+E_{\sigma}\mathcal{P}_{\left(  m,n\right)  }\right)
\varphi\left(  C\right) \\
&  =E_{\sigma}\mathcal{P}_{\left(  m,n\right)  }E_{\sigma}\mathcal{P}_{\left(
m,n\right)  }\varphi\left(  W\right)  +\left(  I_{2mn}+E_{\sigma}%
\mathcal{P}_{\left(  m,n\right)  }\right)  \varphi\left(  C\right)  ,
\end{align*}
where $E=B^{\mathrm{T}}\otimes A$ and $\mathcal{P}_{\left(  m,n\right)  }$ is
defined in\ (\ref{teq36}). It follows that%
\[
\left(  I_{2mn}-\left(  E_{\sigma}\mathcal{P}_{\left(  m,n\right)  }\right)
^{2}\right)  \varphi\left(  W\right)  =\left(  I_{2mn}+E_{\sigma}%
\mathcal{P}_{\left(  m,n\right)  }\right)  \varphi\left(  C\right)  .
\]
Similarly, equation (\ref{teq3}) and expression (\ref{teq105}) are,
respectively, equivalent to%
\[
\left(  I_{2mn}-E_{\sigma}\mathcal{P}_{\left(  m,n\right)  }\right)
=\varphi\left(  C\right)  ,
\]
and%
\[
\varphi\left(  \boldsymbol{X}\right)  =\frac{1}{2}\left(  I_{2mn}+E_{\sigma
}\mathcal{P}_{\left(  m,n\right)  }\right)  \varphi\left(  \boldsymbol{W}%
\right)  +\frac{1}{2}\varphi\left(  C\right)  .
\]
The remaining of the proof is similar to the proof of Theorem \ref{tth1} and
is omitted for brevity.

\textit{Proof of Item 3}. Clearly, equation (\ref{teq4}) has a unique solution
if and only if (\ref{teq5a}) is satisfied. By using Lemma \ref{tlm20},
(\ref{teq5a}) is equivalent to (\ref{teq6}) which is just the condition for
the existence of unique solution of equation (\ref{teq3}). The remaining of
the proof is similar to the proof of Item 3 of Theorem \ref{tth1}. The proof
is done.
\end{proof}

The advantage of Theorem \ref{tth2} is that to solve equation (\ref{teq3}), we
need only to consider the standard Stein equation (\ref{teq4}) which is again
in the form of (\ref{tstein}).

Similarly to the set $\mathscr{S}$ defined in (\ref{teqs}), if we let%
\[
\mathscr{T}=\left\{  x:\left(  I_{2mn}-E_{\sigma}\mathcal{P}_{\left(
m,n\right)  }\right)  x=0,\text{ }x\in\mathbf{R}^{2mn}\right\}  ,
\]
which is a linear subspace contained in $\mathbf{R}^{2mn}$ over the field
$\mathbf{R},$ then we can obtain the following corollary that parallels
Corollary \ref{tcoro1}.

\begin{corollary}
\label{teqcoro}Equation (\ref{teq3}) is solvable if and only if%
\[
\mathrm{rank}\left(  I_{mn}-\left(  A^{\mathrm{H}}B\right)  ^{\mathrm{T}%
}\otimes AB^{\mathrm{H}}\right)  =\mathrm{rank}\left(  \left[
\begin{array}
[c]{cc}%
I_{mn}-\left(  A^{\mathrm{H}}B\right)  ^{\mathrm{T}}\otimes AB^{\mathrm{H}} &
\left(  B^{\mathrm{T}}\otimes A\right)  \mathrm{vec}\left(  C^{\mathrm{H}%
}\right)  +\mathrm{vec}\left(  C\right)
\end{array}
\right]  \right)  .
\]
If the above relation is satisfied, then the degree of freedom (measured by
the maximal number of free real parameters), namely, $\dim\left(
\mathscr{T}\right)  ,$ in the solution is given by%
\[
\dim\left(  \mathscr{T}\right)  =mn-\mathrm{rank}\left(  I_{mn}-\left(
A^{\mathrm{H}}B\right)  ^{\mathrm{T}}\otimes AB^{\mathrm{H}}\right)  .
\]

\end{corollary}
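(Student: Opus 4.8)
The plan is to derive this corollary from Theorem \ref{tth2} in exactly the way Corollary \ref{tcoro1} was derived from Theorem \ref{tth3}. By Theorem \ref{tth2}, equation (\ref{teq3}) is solvable if and only if the standard Stein equation (\ref{teq4}), namely $W=AB^{\mathrm{H}}WA^{\mathrm{H}}B+AC^{\mathrm{H}}B+C$, is solvable for $W\in\mathbf{C}^{m\times n}$. So the first step is to vectorize (\ref{teq4}): applying $\mathrm{vec}\left(\cdot\right)$ and invoking (\ref{axb1}) together with $\mathrm{vec}\left(AB^{\mathrm{H}}WA^{\mathrm{H}}B\right)=\left(\left(A^{\mathrm{H}}B\right)^{\mathrm{T}}\otimes AB^{\mathrm{H}}\right)\mathrm{vec}\left(W\right)$ and $\mathrm{vec}\left(AC^{\mathrm{H}}B\right)=\left(B^{\mathrm{T}}\otimes A\right)\mathrm{vec}\left(C^{\mathrm{H}}\right)$, one rewrites (\ref{teq4}) as the linear system
\[
\left(I_{mn}-\left(A^{\mathrm{H}}B\right)^{\mathrm{T}}\otimes AB^{\mathrm{H}}\right)\mathrm{vec}\left(W\right)=\left(B^{\mathrm{T}}\otimes A\right)\mathrm{vec}\left(C^{\mathrm{H}}\right)+\mathrm{vec}\left(C\right).
\]
Since $\mathrm{vec}\left(\cdot\right)$ is a bijection between $\mathbf{C}^{m\times n}$ and $\mathbf{C}^{mn}$, solvability of (\ref{teq4}) is equivalent to consistency of this system over $\mathbf{C}$, which by the standard consistency criterion for linear equations (Lemma \ref{tlm13}, or equivalently the classical rank test) holds precisely when the two ranks in the first displayed identity of the corollary coincide. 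This settles the solvability statement.

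For the value of $\dim\left(\mathscr{T}\right)$, I would argue as follows. When (\ref{teq3}) is solvable its solution set is an affine subset of $\mathbf{C}^{m\times n}$ over $\mathbf{R}$ (since $\mathscr{T}$ is a real subspace), so the number of free real parameters equals the real dimension of the solution space of the homogeneous equation $X=AX^{\mathrm{H}}B$. As shown in the proof of Lemma \ref{tlm20}, applying $\varphi\left(\cdot\right)$ turns this homogeneous equation into $\left(I_{2mn}-E_{\sigma}\mathcal{P}_{\left(m,n\right)}\right)\varphi\left(X\right)=0$ with $E=B^{\mathrm{T}}\otimes A$, so that $\mathscr{T}$ is the kernel of the real matrix $I_{2mn}-E_{\sigma}\mathcal{P}_{\left(m,n\right)}$ and, $\varphi$ being bijective, $\dim\left(\mathscr{T}\right)=2mn-\mathrm{rank}\left(I_{2mn}-E_{\sigma}\mathcal{P}_{\left(m,n\right)}\right)$. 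The chain of rank identities already carried out in the proof of Lemma \ref{tlm20} — the one running through the Schur-complement reduction and ending in $mn+\mathrm{rank}\left(I_{mn}-EP_{\left(m,n\right)}\overline{E}P_{\left(m,n\right)}\right)$, combined with $EP_{\left(m,n\right)}\overline{E}P_{\left(m,n\right)}=\left(A^{\mathrm{H}}B\right)^{\mathrm{T}}\otimes AB^{\mathrm{H}}$ — gives $\mathrm{rank}\left(I_{2mn}-E_{\sigma}\mathcal{P}_{\left(m,n\right)}\right)=mn+\mathrm{rank}\left(I_{mn}-\left(A^{\mathrm{H}}B\right)^{\mathrm{T}}\otimes AB^{\mathrm{H}}\right)$, and substituting this into the previous display yields the claimed formula for $\dim\left(\mathscr{T}\right)$.

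The only point that needs genuine care — rather than a real obstacle — is the bookkeeping between real and complex ranks: the solvability test is a complex-rank condition on a genuinely complex linear system, whereas $\dim\left(\mathscr{T}\right)$ is the real dimension of a real kernel. These reconcile cleanly because $I_{2mn}-E_{\sigma}\mathcal{P}_{\left(m,n\right)}$ is a real matrix, hence has the same rank over $\mathbf{R}$ and over $\mathbf{C}$, and the reduction in Lemma \ref{tlm20} already re-expresses that rank through the complex quantity $\mathrm{rank}\left(I_{mn}-\left(A^{\mathrm{H}}B\right)^{\mathrm{T}}\otimes AB^{\mathrm{H}}\right)$ appearing in the solvability criterion. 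Apart from this consistency check, the whole argument is a routine transcription of the proof behind Corollary \ref{tcoro1}, with $A\overline{A}$, $\overline{B}B$, $A\overline{C}B+C$ and $\mathscr{S}$ replaced by $AB^{\mathrm{H}}$, $A^{\mathrm{H}}B$, $AC^{\mathrm{H}}B+C$ and $\mathscr{T}$ respectively.
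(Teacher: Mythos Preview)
Your proof is correct and follows essentially the same approach that the paper intends: the paper presents Corollary~\ref{teqcoro} without an explicit proof, merely stating that it ``parallels Corollary~\ref{tcoro1}'' as a consequence of Theorem~\ref{tth2}, and your argument is precisely the expected unpacking of that remark --- vectorize the auxiliary Stein equation~(\ref{teq4}) for the solvability rank test, and feed the rank identity~(\ref{teq34}) from the proof of Lemma~\ref{tlm20} into $\dim(\mathscr{T})=2mn-\mathrm{rank}(I_{2mn}-E_{\sigma}\mathcal{P}_{(m,n)})$ for the dimension count. Your care with the real-versus-complex rank bookkeeping is a useful addition that the paper leaves implicit.
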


\subsection{Solvability Conditions Based on Real Representation}

In this subsection, by using real representation of complex matrices, we
generalize the results in \cite{jw03laa} (namely, Proposition \ref{tlm4}) to
equation (\ref{teq3}).

\begin{theorem}
\label{tth4}Equation (\ref{teq3}) is solvable if and only if equation%
\begin{equation}
Y=A_{\sigma}Y^{\mathrm{T}}B_{\sigma}+C_{\sigma}, \label{teq23}%
\end{equation}
is solvable with unknown $Y$. More specifically,

\begin{enumerate}
\item If $X$ is a solution of equation (\ref{teq3}), then $Y=X_{\sigma}$ is a
solution of equation (\ref{teq23}).

\item If $Y$ is a real solution of (\ref{teq23}), then a solution of equation
(\ref{teq3}) is%
\[
X=\frac{1}{4}\left[
\begin{array}
[c]{cc}%
I_{m} & \mathrm{i}I_{m}%
\end{array}
\right]  \left(  Y+Q_{m}YQ_{n}\right)  \left[
\begin{array}
[c]{c}%
I_{n}\\
\mathrm{i}I_{n}%
\end{array}
\right]  .
\]

\item Equation (\ref{teq3}) has a unique solution $X^{\star}$ if equation
(\ref{teq23}) has a unique solution $Y^{\star}$, namely $1\notin
\lambda\left\{  \left(  B_{\sigma}^{\mathrm{T}}\otimes A_{\sigma}\right)
P_{\left(  2m,2n\right)  }\right\}  .$ Moreover, $Y^{\star}=X_{\sigma}^{\star
}.$
\end{enumerate}
\end{theorem}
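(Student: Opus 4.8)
The plan is to mirror the strategy of Proposition~\ref{tlm4} and of the proof of Theorem~\ref{tth2}: transfer equation~(\ref{teq3}) to its real representation~(\ref{teq23}) via the map $\left(\cdot\right)_{\sigma}$, exploiting its $\mathbf{R}$-linearity together with its compatibility with products and transposition. For Item~1 the crux is the single algebraic identity
\[
A_{\sigma}X_{\sigma}^{\mathrm{T}}B_{\sigma}=\left(AX^{\mathrm{H}}B\right)_{\sigma},\qquad A,X,B\in\mathbf{C}^{m\times n},
\]
the analogue of the identity $\left(A\overline{X}B\right)_{\sigma}=A_{\sigma}X_{\sigma}B_{\sigma}$ underlying Proposition~\ref{tlm4}. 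I would establish it in three steps: first, $\left(\cdot\right)_{\sigma}$ commutes with transposition, so $X_{\sigma}^{\mathrm{T}}=\left(X^{\mathrm{T}}\right)_{\sigma}$; second, applying~(\ref{teq60}) with $U=A$ and $V=X^{\mathrm{T}}$ gives $A_{\sigma}\left(X^{\mathrm{T}}\right)_{\sigma}=\left(A\overline{X^{\mathrm{T}}}\right)_{\phi}=\left(AX^{\mathrm{H}}\right)_{\phi}$; third, the factorisation~(\ref{teq65}) shows that $\left(\cdot\right)_{\phi}$ is multiplicative and that $W_{\phi}\left(I_{n}\right)_{\sigma}=W_{\sigma}$, so $\left(AX^{\mathrm{H}}\right)_{\phi}B_{\sigma}=\left(AX^{\mathrm{H}}\right)_{\phi}B_{\phi}\left(I_{n}\right)_{\sigma}=\left(AX^{\mathrm{H}}B\right)_{\sigma}$. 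Applying $\left(\cdot\right)_{\sigma}$ to $X=AX^{\mathrm{H}}B+C$ then gives $X_{\sigma}=A_{\sigma}X_{\sigma}^{\mathrm{T}}B_{\sigma}+C_{\sigma}$, which is Item~1.

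For Item~2, let $Y$ be any real solution of~(\ref{teq23}). I would first show that $Q_{m}YQ_{n}$ is again a solution: from the elementary facts $Q_{s}^{2}=-I_{2s}$, $Q_{s}^{\mathrm{T}}=-Q_{s}$, and $Q_{m}W_{\sigma}Q_{n}=W_{\sigma}$ for $W\in\mathbf{C}^{m\times n}$ (a direct block computation from~(\ref{tqs})), one gets $Q_{m}A_{\sigma}Y^{\mathrm{T}}B_{\sigma}Q_{n}=A_{\sigma}\left(Q_{m}YQ_{n}\right)^{\mathrm{T}}B_{\sigma}$ and $Q_{m}C_{\sigma}Q_{n}=C_{\sigma}$, whence $Q_{m}YQ_{n}$ solves~(\ref{teq23}); since the equation is affine, the $Q$-symmetrisation $\widetilde{Y}=\tfrac{1}{2}\left(Y+Q_{m}YQ_{n}\right)$ also solves it. Now $\widetilde{Y}$ is real and satisfies $Q_{m}\widetilde{Y}Q_{n}=\widetilde{Y}$, and a real $2m\times 2n$ matrix with the latter property has exactly the block structure of a $\sigma$-representation, i.e. $\widetilde{Y}=X_{\sigma}$ for a unique $X\in\mathbf{C}^{m\times n}$; a short computation checks that this $X$ is precisely the matrix in the displayed formula. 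Finally $X_{\sigma}=\widetilde{Y}=A_{\sigma}X_{\sigma}^{\mathrm{T}}B_{\sigma}+C_{\sigma}=\left(AX^{\mathrm{H}}B+C\right)_{\sigma}$ by the Item~1 identity, and injectivity of $\left(\cdot\right)_{\sigma}$ yields $X=AX^{\mathrm{H}}B+C$.

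For Item~3, observe that~(\ref{teq23}) is an instance of equation~(\ref{teq}) with data $\left(A_{\sigma},B_{\sigma},C_{\sigma}\right)$, so Lemma~\ref{tlm7} shows it has a unique solution for every right-hand side iff $1\notin\lambda\{(B_{\sigma}^{\mathrm{T}}\otimes A_{\sigma})P_{(2m,2n)}\}$. Under this hypothesis the unique solution $Y^{\star}$ is real — conjugate the equation and invoke uniqueness, the coefficients being real — so Item~2 produces a solution $X^{\star}$ of~(\ref{teq3}) with $X_{\sigma}^{\star}=Y^{\star}$, giving existence; uniqueness of $X^{\star}$ follows from Item~1, since any solution $X$ of~(\ref{teq3}) has $X_{\sigma}$ solving~(\ref{teq23}), forcing $X_{\sigma}=Y^{\star}$, and in particular $Y^{\star}=X_{\sigma}^{\star}$. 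I expect the main obstacle to be the bookkeeping in Item~1 — keeping straight when $\left(\cdot\right)_{\sigma}$ versus $\left(\cdot\right)_{\phi}$ occurs and tracking the dimensions of the various identity, $Q$ and $Z$ blocks; once that identity and the $Q$-symmetrisation trick are secured, Items~2 and~3 amount to the same bookkeeping already carried out for Proposition~\ref{tlm4} and Theorem~\ref{tth2}.
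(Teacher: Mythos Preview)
Your proposal is correct and follows essentially the same architecture as the paper's proof: the $Q$-symmetrisation trick for Item~2 and the uniqueness transfer via injectivity of $(\cdot)_{\sigma}$ for Item~3 match the paper exactly. The only difference is in Item~1, where the paper expands $X=AX^{\mathrm{H}}B+C$ directly into real and imaginary parts and assembles the resulting $2\times 2$ block identity by hand, whereas you derive the key identity $A_{\sigma}X_{\sigma}^{\mathrm{T}}B_{\sigma}=(AX^{\mathrm{H}}B)_{\sigma}$ abstractly from the $\sigma/\phi$ calculus of Lemma~\ref{tlm2}; your route is a bit slicker and reuses machinery already in place, while the paper's is more self-contained, but the content is the same.
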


\begin{proof}
\textit{Proof of Item 1.} Let $A=A_{1}+\mathrm{i}A_{2},B=B_{1}+\mathrm{i}%
B_{2}$ and $C=C_{1}+\mathrm{i}C_{2}$ where $A_{i},B_{i}$ and $C_{i},i=1,2$ are
real matrices. Then by substituting $X=X_{1}+\mathrm{i}X_{2}$ where
$X_{1},X_{2}\in\mathbf{R}^{m\times n}$ into (\ref{teq3}), we obtain
\[
\left\{
\begin{array}
[c]{l}%
X_{1}=\left(  A_{1}X_{1}^{\mathrm{T}}+A_{2}X_{2}^{\mathrm{T}}\right)
B_{1}-\left(  A_{2}X_{1}^{\mathrm{T}}-A_{1}X_{2}^{\mathrm{T}}\right)
B_{2}+C_{1},\\
X_{2}=\left(  A_{2}X_{1}^{\mathrm{T}}-A_{1}X_{2}^{\mathrm{T}}\right)
B_{1}+\left(  A_{1}X_{1}^{\mathrm{T}}+A_{2}X_{2}^{\mathrm{T}}\right)
B_{2}+C_{2},
\end{array}
\right.
\]
which is also equivalent to%
\[
\left[
\begin{array}
[c]{cc}%
X_{1} & X_{2}\\
X_{2} & -X_{1}%
\end{array}
\right]  =\left[
\begin{array}
[c]{cc}%
A_{1} & A_{2}\\
A_{2} & -A_{1}%
\end{array}
\right]  \left[
\begin{array}
[c]{cc}%
X_{1} & X_{2}\\
X_{2} & -X_{1}%
\end{array}
\right]  ^{\mathrm{T}}\left[
\begin{array}
[c]{cc}%
B_{1} & B_{2}\\
B_{2} & -B_{1}%
\end{array}
\right]  +\left[
\begin{array}
[c]{cc}%
C_{1} & C_{2}\\
C_{2} & -C_{1}%
\end{array}
\right]  .
\]
This indicates that $Y=X_{\sigma}$ satisfies (\ref{teq23}) in view of the real
representation (\ref{teqra}).

\textit{Proof of Item 2.} Let
\[
Y=\left[
\begin{array}
[c]{cc}%
Y_{1} & Y_{2}\\
Y_{3} & Y_{4}%
\end{array}
\right]  ,\text{ }Y_{i}\in\mathbf{R}^{m\times n},i=1,2,3,4,
\]
be a real solution of equation (\ref{teq23}). Note that $Q_{m}A_{\sigma}%
Q_{n}=A_{\sigma}$ (\cite{jw03laa}) and%
\[
Q_{m}^{-1}=\left[
\begin{array}
[c]{cc}%
0 & I_{m}\\
-I_{m} & 0
\end{array}
\right]  ^{-1}=\left[
\begin{array}
[c]{cc}%
0 & -I_{m}\\
I_{m} & 0
\end{array}
\right]  =Q_{m}^{\mathrm{T}}.
\]
Then by multiplying the left hand side and right hand side of (\ref{teq23}) by
respectively $Q_{m}$ and $Q_{n}$, we obtain%
\begin{align*}
Q_{m}YQ_{n}  &  =Q_{m}A_{\sigma}Y^{\mathrm{T}}B_{\sigma}Q_{n}+Q_{m}C_{\sigma
}Q_{n}\\
&  =Q_{m}A_{\sigma}Q_{n}Q_{n}^{-1}Y^{\mathrm{T}}Q_{m}^{-1}Q_{m}B_{\sigma}%
Q_{n}+Q_{m}C_{\sigma}Q_{n}\\
&  =A_{\sigma}Q_{n}^{\mathrm{T}}Y^{\mathrm{T}}Q_{m}^{\mathrm{T}}B_{\sigma
}+C_{\sigma}\\
&  =A_{\sigma}\left(  Q_{m}YQ_{n}\right)  ^{\mathrm{T}}B_{\sigma}+C_{\sigma}.
\end{align*}
This indicates that $Q_{m}YQ_{n}$ is also a solution of equation
(\ref{teq23}). Consequently,%
\begin{align*}
\mathcal{Y}  &  =\frac{1}{2}\left(  Y+Q_{m}YQ_{n}\right) \\
&  =\frac{1}{2}\left[
\begin{array}
[c]{cc}%
Y_{1}-Y_{4} & Y_{2}+Y_{3}\\
Y_{3}+Y_{2} & -\left(  Y_{1}-Y_{4}\right)
\end{array}
\right] \\
&  \triangleq\left[
\begin{array}
[c]{cc}%
X_{1} & X_{2}\\
X_{2} & -X_{1}%
\end{array}
\right]  ,
\end{align*}
is also a solution of equation (\ref{teq23}). Therefore, the following matrix%
\[
X\triangleq X_{1}+\mathrm{i}X_{2}=\frac{1}{2}\left[
\begin{array}
[c]{cc}%
I_{m} & \mathrm{i}I_{m}%
\end{array}
\right]  \mathcal{Y}\left[
\begin{array}
[c]{c}%
I_{n}\\
\mathrm{i}I_{n}%
\end{array}
\right]  ,
\]
is a solution of equation (\ref{teq3}).

\textit{Proof of Item 3}. We need only to show that if equation (\ref{teq23})
has a unique solution then equation (\ref{teq3}) also has a unique solution.
We show this by contradiction. Assume that equation (\ref{teq3}) has at least
two solutions, say, $X^{1}$ and $X^{2}$ with $X^{1}\neq X^{2}.$ Then,
according Item 1 of this theorem, both $X_{\sigma}^{1}$ and $X_{\sigma}^{2}$
are solutions of equation (\ref{teq23}). Clearly, we have $X_{\sigma}^{1}\neq
X_{\sigma}^{2}$ which yields a contradiction. The proof is finished.
\end{proof}

The following example indicates that the converse of Item 3 of Theorem
\ref{tth4} may also be true.

\begin{example}
\label{texamp3}Consider a linear equation in the form of (\ref{teq3}) with $A$
and $B$ given in (\ref{teqab}). Then equation (\ref{teq23}) has a unique
solution if and only if%
\begin{align*}
1  &  \notin\lambda\left\{  \left(  B_{\sigma}^{\mathrm{T}}\otimes A_{\sigma
}\right)  P_{\left(  2m,2n\right)  }\right\} \\
&  =\left\{  \pm2,\pm\sqrt{\left\vert \alpha\right\vert },\alpha
,\overline{\alpha},\pm\sqrt{2\alpha},\pm\sqrt{2\overline{\alpha}}\right\}  ,
\end{align*}
namely, $\left\vert \alpha\right\vert \neq1$ and $\alpha\neq\frac{1}{2}.$ On
the other hand, since $\lambda\left\{  AB^{\mathrm{H}}\right\}  =\{2,\alpha
\},$ the condition in (\ref{teq6}) is also equivalent to $\left\vert
\alpha\right\vert ^{2}\neq1$ and $\alpha\neq\frac{1}{2}.$ This implies, for
$A$ and $B$ given in (\ref{teqab}), that equation (\ref{teq23}) has a unique
solution if and only if equation (\ref{teq3}) has a unique solution.
\end{example}

However, we also cannot prove the above statement in general. We state it as a
conjecture which can be regarded as the generalization of Theorem \ref{tth9}
to equations (\ref{teq3}) and (\ref{teq23}).

\begin{conjecture}
If equation (\ref{teq3}) has a unique solution $X^{\star},$ then equation
(\ref{teq23}) has a unique solution $Y^{\star}$. Moreover, there holds
$Y^{\star}=X_{\sigma}^{\star}$.
\end{conjecture}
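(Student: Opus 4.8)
The plan is to reduce the conjecture to a single eigenvalue question and then attack it in the spirit in which (\ref{teq20}) was used in the proof of Theorem~\ref{tth9}. The ``moreover'' part and one implication come for free: by Item~1 of Theorem~\ref{tth4}, $X_{\sigma}^{\star}$ solves equation~(\ref{teq23}) whenever $X^{\star}$ solves~(\ref{teq3}), so as soon as~(\ref{teq23}) is known to have a \emph{unique} solution that solution is forced to be $X_{\sigma}^{\star}$; and Item~3 of Theorem~\ref{tth4} already gives that uniqueness for~(\ref{teq23}) implies uniqueness for~(\ref{teq3}). Hence the whole content of the conjecture is the converse implication, which by Lemma~\ref{tlm20} and by the uniqueness criterion recorded in Item~3 of Theorem~\ref{tth4} (equivalently, Lemma~\ref{tlm7} applied with coefficients $A_{\sigma},B_{\sigma}$) reads
\[
\text{condition (\ref{teq6}) holds}\ \Longrightarrow\ 1\notin\lambda\left\{  \mathit{\Psi}\right\}  ,\qquad \mathit{\Psi}:=\left(  B_{\sigma}^{\mathrm{T}}\otimes A_{\sigma}\right)  P_{(2m,2n)}.
\]
Equivalently, by contraposition, I would show: if $1\in\lambda\{\mathit{\Psi}\}$ then $\overline{\eta}\gamma=1$ for some $\eta,\gamma\in\lambda\{AB^{\mathrm{H}}\}$.

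The first concrete step is to square $\mathit{\Psi}$. Exactly as in (\ref{teq104}) (i.e.\ using property~(\ref{teq98}) with the matrices $A_{\sigma},B_{\sigma}$), one gets $P_{(2m,2n)}(B_{\sigma}^{\mathrm{T}}\otimes A_{\sigma})P_{(2m,2n)}=A_{\sigma}\otimes B_{\sigma}^{\mathrm{T}}$, and therefore $\mathit{\Psi}^{2}=(B_{\sigma}^{\mathrm{T}}A_{\sigma})\otimes(A_{\sigma}B_{\sigma}^{\mathrm{T}})$. Now Lemma~\ref{tlm2} applies, since $B_{\sigma}^{\mathrm{T}}=(B^{\mathrm{T}})_{\sigma}$: it gives $A_{\sigma}B_{\sigma}^{\mathrm{T}}=A_{\sigma}(B^{\mathrm{T}})_{\sigma}=(A\overline{B^{\mathrm{T}}})_{\phi}=(AB^{\mathrm{H}})_{\phi}$ and $B_{\sigma}^{\mathrm{T}}A_{\sigma}=(B^{\mathrm{T}})_{\sigma}A_{\sigma}=(B^{\mathrm{T}}\overline{A})_{\phi}$. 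By the block decomposition~(\ref{teq65}), $\lambda\{(M)_{\phi}\}=\lambda\{M\}\cup\lambda\{\overline{M}\}$ for any square $M$; and since $B^{\mathrm{T}}\overline{A}$ and $\overline{A}B^{\mathrm{T}}=\overline{AB^{\mathrm{H}}}$ share their nonzero eigenvalues, writing $\lambda\{AB^{\mathrm{H}}\}=\{\eta_{1},\ldots,\eta_{m}\}$ yields (up to zeros)
\[
\lambda\left\{  A_{\sigma}B_{\sigma}^{\mathrm{T}}\right\}  =\left\{  \eta_{i},\overline{\eta_{i}}:1\le i\le m\right\}  ,\qquad \lambda\left\{  B_{\sigma}^{\mathrm{T}}A_{\sigma}\right\}  =\left\{  \eta_{i},\overline{\eta_{i}}:1\le i\le m\right\}  \cup\left\{  0\right\}  ,
\]
so that $\lambda\{\mathit{\Psi}^{2}\}$ consists of the products $\eta_{i}\eta_{j}$, $\overline{\eta_{i}}\,\overline{\eta_{j}}$, $\eta_{i}\overline{\eta_{j}}$, $\overline{\eta_{i}}\eta_{j}$ together with zeros. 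Consequently $1\in\lambda\{\mathit{\Psi}\}$ forces $1\in\lambda\{\mathit{\Psi}^{2}\}$, and hence either $\overline{\eta_{i}}\eta_{j}=1$ for some $i,j$ --- which is exactly the negation of~(\ref{teq6}) and would finish the proof --- or else $\eta_{i}\eta_{j}=1$ for some $i,j$.

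The hard part, and the reason this is only a conjecture, is to exclude the second alternative: one must show that if $\eta_{i}\eta_{j}=1$ for some pair while $\overline{\eta}\gamma\neq1$ for all $\eta,\gamma\in\lambda\{AB^{\mathrm{H}}\}$, then the eigenvalue of $\mathit{\Psi}$ whose square equals that $\eta_{i}\eta_{j}=1$ is $-1$ rather than $+1$. This is the usual sign--of--the--square--root obstruction for operators of the form $Y\mapsto A_{\sigma}Y^{\mathrm{T}}B_{\sigma}$: the spectrum of $\mathit{\Psi}$ is not determined by that of $\mathit{\Psi}^{2}$, and the two square roots of a product need not both be eigenvalues of $\mathit{\Psi}$. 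What should break the tie is the special structure of real representations --- $Q_{m}A_{\sigma}Q_{n}=A_{\sigma}$, $Q_{m}B_{\sigma}Q_{n}=B_{\sigma}$ with $Q_{s}$ as in~(\ref{tqs}), together with the conjugation symmetry of Lemma~\ref{tlm9} --- but I do not see how to turn this into an argument. The most direct attempt is: an eigenvector of $\mathit{\Psi}$ for eigenvalue $1$ is a nonzero $Y$ with $Y=A_{\sigma}Y^{\mathrm{T}}B_{\sigma}$; substituting once gives the Stein identity $Y=(AB^{\mathrm{H}})_{\phi}\,Y\,((B^{\mathrm{T}}\overline{A})_{\phi})^{\mathrm{T}}$, and one would like to diagonalize via the unitaries $Z_{m},Z_{n}$ of Lemma~\ref{tlm2} and read off a contradiction; but the transpose in $Y\mapsto A_{\sigma}Y^{\mathrm{T}}B_{\sigma}$ couples $Y$ with $Y^{\mathrm{T}}$ and with conjugates, it prevents a clean block decoupling, and this is precisely where I expect to get stuck.

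Two fallback routes seem worth trying. First, mimic the determinant computation~(\ref{teq20}) literally: evaluate $\det(sI_{4mn}-\mathit{\Psi})$ by permuting blocks and applying the Schur complement (Lemma~\ref{tlm10}) repeatedly, hoping it factors as a product of terms $(s^{2}-\overline{\eta_{i}}\eta_{j})$ (times first--degree ``diagonal'' factors that are never $1$ under~(\ref{teq6})); if such a factorization holds the conjecture is immediate, and Example~\ref{texamp3} is consistent with it, but the permutation $P_{(2m,2n)}$ makes the block algebra heavy. Second, a continuity/perturbation argument: prove the statement when $AB^{\mathrm{H}}$ is diagonalizable with eigenvalues in general position (where $\mathit{\Psi}$ can be diagonalized explicitly), and then exploit that both ``$1\in\lambda\{\mathit{\Psi}\}$'' and ``condition~(\ref{teq6}) fails'' are closed conditions in $(A,B)$; the difficulty here is that uniqueness is not preserved under limits, so this controls only one inclusion cleanly. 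In summary, the reduction and the computation of $\lambda\{\mathit{\Psi}^{2}\}$ are routine, and the genuine obstacle is resolving the sign of the square root in passing from $\mathit{\Psi}^{2}$ to $\mathit{\Psi}$ in the case $\eta_{i}\eta_{j}=1$.
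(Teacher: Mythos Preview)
Your reduction is sound and your identification of the obstruction is exactly right, but there is nothing in the paper to compare against: the paper does \emph{not} prove this statement. It is explicitly labelled a \emph{conjecture}, introduced with the sentence ``However, we also cannot prove the above statement in general. We state it as a conjecture\ldots''. So the paper's ``proof'' of this item is empty by design.

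In fact your proposal already goes further than the paper does. The paper only offers Example~\ref{texamp3} as numerical evidence and then states the conjecture; you have correctly reduced the question to showing that $1\in\lambda\{\mathit{\Psi}\}$ forces the failure of condition~(\ref{teq6}), computed $\lambda\{\mathit{\Psi}^{2}\}$ via Lemma~\ref{tlm2} and~(\ref{teq65}), and isolated the residual ``sign of the square root'' problem in the case $\eta_{i}\eta_{j}=1$ with $\overline{\eta}\gamma\neq1$. That residual problem is precisely what the authors could not settle either, so your honest admission that you ``do not see how to turn this into an argument'' is not a gap relative to the paper---it is the current state of the question. Your two fallback suggestions (a direct determinant factorization in the spirit of~(\ref{teq20}), or a genericity-plus-closure argument) are reasonable lines of attack but remain open.
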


Even though the above conjecture is true, compared with the results in
Subsection \ref{tsec4.1}, there is no advantage by transforming equation
(\ref{teq3}) into equation (\ref{teq23}) via its real representation.

\subsection{Closed-Form and Iterative Solutions of Equation $X=AX^{\mathrm{H}%
}B+C$}

Based on Theorem \ref{tth2}, we can extend Corollary \ref{tcoro2} to equation
(\ref{teq3}) without a proof.

\begin{theorem}
\label{tth7}Let $A\in\mathbf{C}^{m\times n},B\in\mathbf{C}^{m\times n}%
,C\in\mathbf{C}^{m\times n}$ and
\[
h_{AB^{\mathrm{H}}}\left(  s\right)  =\det\left(  I_{m}-sAB^{\mathrm{H}%
}\right)  =\alpha_{m}+\sum\limits_{i=1}^{m}\alpha_{m-i}s^{i},\text{ }%
\alpha_{m}=1.
\]

\begin{enumerate}
\item If equation (\ref{teq3}) has a solution $X,$ then%
\[
Xh_{AB^{\mathrm{H}}}\left(  A^{\mathrm{H}}B\right)  =\sum\limits_{k=1}^{m}%
\sum\limits_{s=1}^{k}\alpha_{k}\left(  AB^{\mathrm{H}}\right)  ^{k-s}\left(
AC^{\mathrm{H}}B+C\right)  \left(  A^{\mathrm{H}}B\right)  ^{m-s}.
\]

\item If equation (\ref{teq3}) has a unique solution $X^{\star},$ then%
\[
X^{\star}=\left(  \sum\limits_{k=1}^{m}\sum\limits_{s=1}^{k}\alpha_{k}\left(
AB^{\mathrm{H}}\right)  ^{k-s}\left(  AC^{\mathrm{H}}B+C\right)  \left(
A^{\mathrm{H}}B\right)  ^{m-s}\right)  \left(  h_{AB^{\mathrm{H}}}\left(
A^{\mathrm{H}}B\right)  \right)  ^{-1}.
\]

\end{enumerate}
\end{theorem}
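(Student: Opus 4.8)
The plan is to deduce this result directly from Theorem \ref{tth2} together with Lemma \ref{tlm3}, in exactly the same way Theorem \ref{tth6} was obtained from Theorem \ref{tth1} and Lemma \ref{tlm3}, and Corollary \ref{tcoro2} from Theorem \ref{tth3} and Lemma \ref{tlm3}. First I would observe that by Theorem \ref{tth2}, equation (\ref{teq3}) is solvable if and only if the standard Stein equation (\ref{teq4}), namely
\[
W=AB^{\mathrm{H}}WA^{\mathrm{H}}B+AC^{\mathrm{H}}B+C,
\]
is solvable, and moreover any solution $X$ of (\ref{teq3}) is also a solution $W=X$ of (\ref{teq4}). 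Thus (\ref{teq4}) is of the form (\ref{tstein}) with the identifications $\mathcal{A}=AB^{\mathrm{H}}$, $\mathcal{B}=A^{\mathrm{H}}B$, $\mathcal{C}=AC^{\mathrm{H}}B+C$; note that $\mathcal{A}\in\mathbf{C}^{m\times m}$ and $\mathcal{B}\in\mathbf{C}^{m\times m}$, so $h_{AB^{\mathrm{H}}}$ is the correct characteristic polynomial to use in Lemma \ref{tlm3}.

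For Item 1, suppose (\ref{teq3}) has a solution $X$. Then $W=X$ solves (\ref{teq4}), so applying Item 1 of Lemma \ref{tlm3} with the above identifications gives
\[
W\,h_{AB^{\mathrm{H}}}\!\left(A^{\mathrm{H}}B\right)=\sum_{k=1}^{m}\sum_{s=1}^{k}\alpha_{k}\left(AB^{\mathrm{H}}\right)^{k-s}\left(AC^{\mathrm{H}}B+C\right)\left(A^{\mathrm{H}}B\right)^{m-s},
\]
and substituting $W=X$ yields the claimed identity. For Item 2, suppose (\ref{teq3}) has a unique solution $X^{\star}$. By Item 3 of Theorem \ref{tth2}, equation (\ref{teq4}) then also has a unique solution $W^{\star}$, and $X^{\star}=W^{\star}$. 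Apply Item 2 of Lemma \ref{tlm3} to (\ref{teq4}): uniqueness of $W^{\star}$ forces $h_{AB^{\mathrm{H}}}(A^{\mathrm{H}}B)$ to be invertible, and
\[
W^{\star}=\left(\sum_{k=1}^{m}\sum_{s=1}^{k}\alpha_{k}\left(AB^{\mathrm{H}}\right)^{k-s}\left(AC^{\mathrm{H}}B+C\right)\left(A^{\mathrm{H}}B\right)^{m-s}\right)\left(h_{AB^{\mathrm{H}}}\!\left(A^{\mathrm{H}}B\right)\right)^{-1}.
\]
Since $X^{\star}=W^{\star}$, this is the desired closed-form expression.

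There is essentially no obstacle here: the theorem is a mechanical corollary of two already-established results, which is precisely why the authors state it "without a proof." The only point that requires a line of care is checking that the dimensions match — that $AB^{\mathrm{H}}$ and $A^{\mathrm{H}}B$ are both $m\times m$ when $A,B\in\mathbf{C}^{m\times n}$, so that Lemma \ref{tlm3} (which is stated for a square coefficient on both sides when the unknown is $m\times n$, here with $n$ replaced by $m$) applies verbatim with $h_{AB^{\mathrm{H}}}$ as the characteristic polynomial of the left coefficient. Everything else is a direct substitution, so I would keep the write-up to a few sentences, mirroring the proof of Theorem \ref{tth6}.
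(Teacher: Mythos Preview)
Your approach is exactly what the paper intends: it states Theorem \ref{tth7} ``without a proof,'' explicitly as the extension of Corollary \ref{tcoro2} to equation (\ref{teq3}) via Theorem \ref{tth2}, so combining Theorem \ref{tth2} with Lemma \ref{tlm3} is the correct and expected route. One small slip in your dimension check: $\mathcal{B}=A^{\mathrm{H}}B\in\mathbf{C}^{n\times n}$, not $\mathbf{C}^{m\times m}$ (and $W\in\mathbf{C}^{m\times n}$), so Lemma \ref{tlm3} applies verbatim with no replacement of $n$ by $m$; otherwise the argument is sound.
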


We next consider the Smith iteration associated with equation (\ref{teq3}).
Similar to (\ref{teq21}), this iteration is in the form of%
\begin{equation}
X_{k+1}=AX_{k}^{\mathrm{H}}B+C,\text{ \ }\forall X_{0}\in\mathbf{C}^{m\times
n}. \label{teq27}%
\end{equation}
The following result is about the convergence of the above iteration.

\begin{theorem}
The Smith iteration (\ref{teq27}) converges to the unique solution $X^{\star}$
of equation (\ref{teq3}) for arbitrary initial condition $X_{0}$ if and only
if%
\begin{equation}
\rho\left(  B^{\mathrm{H}}A\right)  <1. \label{teq24}%
\end{equation}
Moreover, the asymptotic exponential convergence rate is $-\ln\left(
\rho\left(  B^{\mathrm{H}}A\right)  \right)  .$
\end{theorem}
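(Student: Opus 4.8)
The plan is to reproduce, almost verbatim, the argument behind Theorem \ref{tth5}, with the real-linear bijection $\varphi(\cdot)$ playing the role that $\mathrm{vec}(\cdot)$ played there, since in the proof of Lemma \ref{tlm20} the operator $\varphi$ has already been shown to convert the conjugate-transpose nonlinearity into an honest linear map. \textit{Step 1 (vectorize the iteration).} Writing $E=B^{\mathrm{T}}\otimes A$ and applying $\varphi(\cdot)$ to both sides of (\ref{teq27}), exactly as in the proof of Lemma \ref{tlm20} (using Lemma \ref{tlm12} and $\varphi(X^{\mathrm{T}})=\mathcal{P}_{(m,n)}\varphi(X)$ with $\mathcal{P}_{(m,n)}$ as in (\ref{teq36})), one obtains the linear recursion
\[
\varphi\!\left(X_{k+1}\right)=E_{\sigma}\mathcal{P}_{(m,n)}\,\varphi\!\left(X_{k}\right)+\varphi\!\left(C\right),\qquad k\geq0 .
\]
This is of the form (\ref{teqit}), hence it converges to a limit independent of $\varphi(X_0)$ if and only if $\rho\!\left(E_{\sigma}\mathcal{P}_{(m,n)}\right)<1$, with asymptotic exponential convergence rate $-\ln\!\left(\rho\!\left(E_{\sigma}\mathcal{P}_{(m,n)}\right)\right)$; since $\varphi$ is bijective the same conclusions hold for the matrix iteration (\ref{teq27}).

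\textit{Step 2 (the spectral identity).} The heart of the proof is to establish $\rho\!\left(E_{\sigma}\mathcal{P}_{(m,n)}\right)=\rho\!\left(B^{\mathrm{H}}A\right)$, the analogue of Lemma \ref{tlm1}. I would argue by squaring: from the block identity $\mathcal{P}_{(m,n)}M_{\sigma}\mathcal{P}_{(m,n)}=\left(P_{(m,n)}MP_{(m,n)}\right)_{\sigma}$ (immediate from the definitions, valid for any complex $M$) together with (\ref{teq104}) one gets $\mathcal{P}_{(m,n)}E_{\sigma}\mathcal{P}_{(m,n)}=\left(A\otimes B^{\mathrm{T}}\right)_{\sigma}$, and therefore
\[
\left(E_{\sigma}\mathcal{P}_{(m,n)}\right)^{2}=E_{\sigma}\left(A\otimes B^{\mathrm{T}}\right)_{\sigma}=\left(\left(B^{\mathrm{T}}\otimes A\right)\!\left(\overline{A}\otimes B^{\mathrm{H}}\right)\right)_{\phi}=\left(\overline{B^{\mathrm{H}}A}\otimes AB^{\mathrm{H}}\right)_{\phi},
\]
where (\ref{teq60}) is used for the collapse $\sigma\sigma\to\phi$. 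By Lemma \ref{tlm2}, for square $M$ the matrix $M_{\phi}$ is unitarily similar to $\mathrm{diag}(\overline{M},M)$, so $\rho(M_{\phi})=\rho(M)$; hence $\rho\!\left((E_{\sigma}\mathcal{P}_{(m,n)})^{2}\right)=\rho\!\left(\overline{B^{\mathrm{H}}A}\otimes AB^{\mathrm{H}}\right)=\rho\!\left(B^{\mathrm{H}}A\right)\rho\!\left(AB^{\mathrm{H}}\right)=\rho\!\left(B^{\mathrm{H}}A\right)^{2}$, using that $AB^{\mathrm{H}}$ and $B^{\mathrm{H}}A$ have the same nonzero eigenvalues. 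Taking square roots gives the identity.

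\textit{Step 3 (identify the limit, and conclude).} Under $\rho(B^{\mathrm{H}}A)<1$ condition (\ref{teq6}) holds, since $|\overline{\eta}\gamma|=|\eta|\,|\gamma|<1$ for all $\eta,\gamma\in\lambda\{AB^{\mathrm{H}}\}$; so by Theorem \ref{tth2} equation (\ref{teq3}) has a unique solution $X^{\star}$, and whenever (\ref{teq27}) converges its limit $X_{\infty}$ satisfies $X_{\infty}=AX_{\infty}^{\mathrm{H}}B+C$ and hence equals $X^{\star}$. Combined with the rate from Steps 1--2 this yields both assertions. A shorter route for the "only if'' direction, parallel to the proof of Theorem \ref{tth8}, is to note that the even subsequence obeys the standard Smith iteration $X_{2(i+1)}=AB^{\mathrm{H}}X_{2i}A^{\mathrm{H}}B+AC^{\mathrm{H}}B+C$ for equation (\ref{teq4}), which converges for arbitrary start precisely when $\rho(AB^{\mathrm{H}})\rho(A^{\mathrm{H}}B)=\rho(B^{\mathrm{H}}A)^{2}<1$; the $\varphi$-argument is still required to get convergence of the full sequence rather than only of the even terms. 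The only genuine obstacle is bookkeeping: carefully pushing the conjugations, transposes and Kronecker products through the $\sigma/\phi/\mathcal{P}$ calculus in Step 2; the rest is a direct transcription of the proofs of Theorems \ref{tth5} and \ref{tth8}.
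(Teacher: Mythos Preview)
Your proof is correct but follows a genuinely different route from the paper. The paper stays at the matrix level: it first handles the ``only if'' direction via the even-subsequence argument (exactly the shortcut you mention at the end), and for the ``if'' direction it passes to the real representation of the \emph{iteration itself}, rewriting (\ref{teq27}) as $(X_{k+1})_{\sigma}=A_{\sigma}\bigl((X_{k})_{\sigma}\bigr)^{\mathrm{T}}B_{\sigma}+C_{\sigma}$ (this is the content of Item~1 of Theorem~\ref{tth4}), then invokes Theorem~\ref{tth5} directly together with Lemma~\ref{tlm2} to evaluate $\rho(A_{\sigma}B_{\sigma}^{\mathrm{T}})=\rho\bigl((AB^{\mathrm{H}})_{\phi}\bigr)=\rho(AB^{\mathrm{H}})$. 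You instead vectorize through $\varphi$ and prove the spectral identity $\rho(E_{\sigma}\mathcal{P}_{(m,n)})=\rho(B^{\mathrm{H}}A)$ from scratch by squaring, which is the exact analogue of Lemma~\ref{tlm1} for the $X^{\mathrm{H}}$ case. Your approach is more self-contained and handles both directions (and the rate) in one stroke, at the cost of a longer Kronecker/$\sigma$/$\phi$ computation; the paper's approach is shorter because it reuses Theorem~\ref{tth5} wholesale, but it needs the separate subsequence argument for necessity and the equivalence $X=AX^{\mathrm{H}}B+C \Leftrightarrow X_{\sigma}=A_{\sigma}X_{\sigma}^{\mathrm{T}}B_{\sigma}+C_{\sigma}$ from Theorem~\ref{tth4}.
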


\begin{proof}
Similar to the proof of Theorem \ref{tth8}, we can show that if (\ref{teq27})
converges, we must have (\ref{teq24}). Therefore, we need only to show that if
(\ref{teq24}) is satisfied, the iteration in (\ref{teq27}) must converge to
the unique solution of equation (\ref{teq3}).

By using the real representation and Theorem \ref{tth5}, the iteration in
(\ref{teq27}) is equivalent to%
\begin{equation}
\left(  X_{k+1}\right)  _{\sigma}=A_{\sigma}\left(  \left(  X_{k}\right)
_{\sigma}\right)  ^{\mathrm{T}}B_{\sigma}+C_{\sigma}. \label{teq29}%
\end{equation}
According to Theorem \ref{tth5}, the above iteration converges to a constant
matrix independent of the initial condition if and only if $\rho\left(
A_{\sigma}B_{\sigma}^{\mathrm{T}}\right)  <1.$ According to Lemma \ref{tlm2},
we have%
\begin{align*}
\rho\left(  A_{\sigma}B_{\sigma}^{\mathrm{T}}\right)   &  =\rho\left(  \left(
AB^{\mathrm{H}}\right)  _{\phi}\right) \\
&  =\max\left\{  \rho\left(  AB^{\mathrm{H}}\right)  ,\rho\left(
\overline{AB^{\mathrm{H}}}\right)  \right\} \\
&  =\rho\left(  AB^{\mathrm{H}}\right)  .
\end{align*}
Hence, if (\ref{teq24}) is satisfied, iteration (\ref{teq29}) converges to a
constant matrix $X_{\infty}$ satisfying $\left(  X_{\infty}\right)  _{\sigma
}=A_{\sigma}\left(  \left(  X_{\infty}\right)  _{\sigma}\right)  ^{\mathrm{T}%
}B_{\sigma}+C_{\sigma}$, or equivalently, $X_{\infty}=AX_{\infty}^{\mathrm{H}%
}B+C.$ The proof is completed by observing that the equation (\ref{teq3}) has
a unique solution under condition (\ref{teq24}).
\end{proof}

\begin{remark}
\label{trm4}Under condition (\ref{teq24}), the unique solution of equation
(\ref{teq3}) is also the unique solution of equation (\ref{teq4}), namely,%
\[
X^{\star}=\sum\limits_{i=0}^{\infty}\left(  AA^{\mathrm{H}}\right)
^{i}\left(  AC^{\mathrm{H}}B+C\right)  \left(  B^{\mathrm{H}}B\right)  ^{i}.
\]
Therefore, Remark \ref{trm1} and Remark \ref{trm2} are also applicable to
equation (\ref{teq3}).
\end{remark}

We finally use a simple numerical example to end this section.

\begin{example}
\label{texamp4}Consider a linear equation in the form of (\ref{teq3}) with%
\[
A=\left[
\begin{array}
[c]{ccc}%
1 & 1+\mathrm{i} & 1\\
-2 & \mathrm{i} & -\mathrm{i}\\
1-\mathrm{i} & 0 & -1
\end{array}
\right]  ,\text{ \ }B=\left[
\begin{array}
[c]{ccc}%
\mathrm{i} & 1 & -1\\
0 & \mathrm{i} & 2+\mathrm{i}\\
1+\mathrm{i} & 3 & -\mathrm{i}%
\end{array}
\right]  ,
\]
and%
\[
C=\left[
\begin{array}
[c]{ccc}%
-5+\mathrm{i} & -4-\mathrm{i} & -5-12\mathrm{i}\\
2-\mathrm{i} & -4-2\mathrm{i} & 6+8\mathrm{i}\\
1+3\mathrm{i} & 15-5\mathrm{i} & -4-5\mathrm{i}%
\end{array}
\right]  .
\]
It is readily to verify that the eigenvalue set of $AB^{\mathrm{H}}$ satisfies
the condition in (\ref{teq6}). Then it follows from Theorem \ref{tth2} that
the matrix equation in (\ref{teq3}) has a unique solution which can be
obtained by using Theorem \ref{tth7} as follows:
\[
X=\left[
\begin{array}
[c]{ccc}%
1+3\mathrm{i} & -2 & 0\\
1 & 2-\mathrm{i} & 1\\
-2 & 2 & 2+\mathrm{i}%
\end{array}
\right]  .
\]

\end{example}

\section{\label{tsec7}A General Equation}

In this section, we point out that the results obtained above can be easily
extended to a general class of matrix equations having more terms on the right
hand side, say,\qquad%
\begin{equation}
X=\sum\limits_{i=0}^{N}A_{i}f\left(  X\right)  B_{i}+C, \label{teq37}%
\end{equation}
where $N\geq0,$ $A_{i}\in\mathbf{C}^{m\times m},B_{i}\in\mathbf{C}^{n\times
n},i\in\{0,1,\ldots,N\}$ and $C\in\mathbf{C}^{m\times n}$ are given, and
$X\in\mathbf{C}^{m\times n}$ is to be determined. In fact, by using
Proposition \ref{tpp1} and a similar technique used in Section \ref{tsec3}, we
can extend Theorem \ref{tth3} to equation (\ref{teq37}), as stated in the
following theorem.

\begin{theorem}
\label{tth10}Matrix equation (\ref{teq37}) is solvable if and only if the
following equation%
\begin{equation}
W=\sum\limits_{i=0}^{N}A_{i}\left(  \sum\limits_{k=0}^{N}f\left(
A_{k}f\left(  W\right)  B_{k}\right)  \right)  B_{i}+\sum\limits_{i=0}%
^{N}A_{i}f\left(  C\right)  B_{i}+C, \label{teq38}%
\end{equation}
is solvable with unknown $W$. More specifically:

\begin{enumerate}
\item If $X$ is a solution of equation (\ref{teq37}), then $W=X$ is a solution
of equation (\ref{teq38}).

\item If $\boldsymbol{W}$ is the general closed-form solution of equation
(\ref{teq38}), then the general closed-form solution of equation (\ref{teq37})
is given by
\[
\boldsymbol{X}=\frac{1}{2}\left(  \boldsymbol{W}+\sum\limits_{i=0}^{N}%
A_{i}f\left(  \boldsymbol{W}\right)  B_{i}+C\right)  .
\]

\item If $f\left(  X\right)  =\overline{X}$ or $f\left(  X\right)
=X^{\mathrm{H}},$ then equation (\ref{teq37}) has a unique solution $X^{\star
}$ if and only if equation (\ref{teq38}) has a unique solution $W^{\star}$. If
$f\left(  X\right)  =X^{\mathrm{T}},$ then equation (\ref{teq37}) has a unique
solution $X^{\star}$ if equation (\ref{teq38}) has a unique solution
$W^{\star}$.\ In both cases, $X^{\star}=W^{\star}.$
\end{enumerate}
\end{theorem}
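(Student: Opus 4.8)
The plan is to generalize the proofs of Theorems~\ref{tth1}, \ref{tth3} and~\ref{tth2} almost verbatim, reducing everything to Proposition~\ref{tpp1} by vectorization. Introduce the operator $\mathcal{L}(Y)=\sum_{i=0}^{N}A_{i}f(Y)B_{i}$ on $\mathbf{C}^{m\times n}$, which is $\mathbf{C}$-linear when $f(X)=X^{\mathrm{T}}$ and antilinear when $f(X)=\overline{X}$ or $f(X)=X^{\mathrm{H}}$; with this notation (\ref{teq37}) reads $X=\mathcal{L}(X)+C$. Since $f$ is additive (it is one of $(\cdot)^{\mathrm{T}},\overline{(\cdot)},(\cdot)^{\mathrm{H}}$), we have $\sum_{k}f(A_{k}f(W)B_{k})=f\bigl(\sum_{k}A_{k}f(W)B_{k}\bigr)=f(\mathcal{L}(W))$, so the right-hand side of (\ref{teq38}) equals $\mathcal{L}(\mathcal{L}(W))+\mathcal{L}(C)+C$; that is, (\ref{teq38}) is exactly $W=\mathcal{L}^{2}(W)+\mathcal{L}(C)+C$, and the proposed reconstruction formula is $\boldsymbol{X}=\tfrac12(\boldsymbol{W}+\mathcal{L}(\boldsymbol{W})+C)$.

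Next I would apply the appropriate stretching map $\nu$: $\mathrm{vec}(\cdot)$ when $f(X)=X^{\mathrm{T}}$ (using Lemma~\ref{tlm0}), and $\varphi(\cdot)$ when $f(X)=\overline{X}$ or $f(X)=X^{\mathrm{H}}$ (using Lemma~\ref{tlm12} together with the identity $\varphi(AX^{\mathrm{H}}B)=(B^{\mathrm{T}}\otimes A)_{\sigma}\mathcal{P}_{(m,n)}\varphi(X)$ established in the proof of Lemma~\ref{tlm20}). Summing these identities over $i$ produces, in each of the three cases, a square matrix $\mathit{\Upsilon}$ of size $d\in\{mn,2mn\}$ with $\nu(\mathcal{L}(Y))=\mathit{\Upsilon}\,\nu(Y)$ for every $Y$ (concretely $\mathit{\Upsilon}=EP_{(m,n)}$, $E_{\sigma}$, or $E_{\sigma}\mathcal{P}_{(m,n)}$, where $E=\sum_{i}B_{i}^{\mathrm{T}}\otimes A_{i}$). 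Hence (\ref{teq37}), (\ref{teq38}) and the reconstruction formula become, respectively, $(I_{d}-\mathit{\Upsilon})\nu(X)=\nu(C)$, $(I_{d}-\mathit{\Upsilon}^{2})\nu(W)=(I_{d}+\mathit{\Upsilon})\nu(C)$, and $\nu(\boldsymbol{X})=\tfrac12(I_{d}+\mathit{\Upsilon})\nu(\boldsymbol{W})+\tfrac12\nu(C)$ --- precisely the shapes (\ref{teq92}), (\ref{teq90}) and (\ref{teq91}). Items~1 and~2, together with the solvability equivalence, then fall out of Proposition~\ref{tpp1} after applying $\nu^{-1}$, exactly as in the proofs of Theorems~\ref{tth1}, \ref{tth3} and~\ref{tth2}.

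For Item~3, ``unique solution'' means invertibility of $I_{d}-\mathit{\Upsilon}$ for (\ref{teq37}) and of $I_{d}-\mathit{\Upsilon}^{2}=(I_{d}-\mathit{\Upsilon})(I_{d}+\mathit{\Upsilon})$ for (\ref{teq38}); the factorization shows at once that uniqueness for (\ref{teq38}) forces uniqueness for (\ref{teq37}) --- which is all that is claimed in the case $f(X)=X^{\mathrm{T}}$ --- and Item~1 then forces $X^{\star}=W^{\star}$. For the converse when $f(X)=\overline{X}$ or $f(X)=X^{\mathrm{H}}$, I would use that $\mathcal{L}$ is antilinear: letting $J$ be the invertible real matrix that represents multiplication by $\mathrm{i}$ under $\nu$, the identity $\mathcal{L}(\mathrm{i}Y)=-\mathrm{i}\mathcal{L}(Y)$ gives $\mathit{\Upsilon}J=-J\mathit{\Upsilon}$, so $\mathit{\Upsilon}$ is similar to $-\mathit{\Upsilon}$ and $\lambda\{\mathit{\Upsilon}\}=-\lambda\{\mathit{\Upsilon}\}$ (for $f(X)=\overline{X}$ this symmetry is also immediate from Lemma~\ref{tlm9}). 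Consequently $1\notin\lambda\{\mathit{\Upsilon}^{2}\}\Leftrightarrow\{1,-1\}\cap\lambda\{\mathit{\Upsilon}\}=\varnothing\Leftrightarrow 1\notin\lambda\{\mathit{\Upsilon}\}$, which is the desired ``if and only if'', again with $X^{\star}=W^{\star}$. The one step that needs genuine care is this spectral-symmetry argument in the case $f(X)=X^{\mathrm{H}}$ (the anticommutation $\mathit{\Upsilon}J=-J\mathit{\Upsilon}$, or equivalently a rank computation in the spirit of the proof of Lemma~\ref{tlm20}); everything else is bookkeeping on top of Proposition~\ref{tpp1} and the vectorization lemmas. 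It is worth noting that the symmetry genuinely fails in the linear case $f(X)=X^{\mathrm{T}}$ --- there $\mathit{\Upsilon}$ may have $-1$ as an eigenvalue without having $1$ --- which is exactly why Item~3 is only one-directional for $X^{\mathrm{T}}$, in agreement with Example~\ref{texamp1}.
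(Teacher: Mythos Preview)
Your proposal is correct and follows the paper's template closely: the paper itself does not prove Theorem~\ref{tth10} but refers back to the proofs of Theorems~\ref{tth1}, \ref{tth3} and~\ref{tth2}, and those proofs are exactly the vectorize-then-apply-Proposition~\ref{tpp1} scheme you describe. Your operator notation $\mathcal{L}$, $\nu$, $\mathit{\Upsilon}$ packages all three choices of $f$ and all $N+1$ terms uniformly, but the underlying mechanism for Items~1 and~2 is identical to the paper's.

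The one place where you genuinely depart is the ``if and only if'' half of Item~3 for $f(X)=\overline{X}$ and $f(X)=X^{\mathrm{H}}$. In the single-term case the paper obtains the equivalence $1\notin\lambda\{\mathit{\Upsilon}\}\Leftrightarrow 1\notin\lambda\{\mathit{\Upsilon}^{2}\}$ through explicit rank identities --- relation~(\ref{teq52}) in Lemma~\ref{tlm11} for $\overline{X}$, and the computation culminating in~(\ref{teq34}) inside the proof of Lemma~\ref{tlm20} for $X^{\mathrm{H}}$ --- each of which reduces $\mathrm{rank}(I_{2mn}-\mathit{\Upsilon})$ to $mn$ plus the rank of a concrete $mn\times mn$ complex matrix. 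Your route via the anticommutation $\mathit{\Upsilon}J=-J\mathit{\Upsilon}$ (equivalently, the antilinearity $\mathcal{L}(\mathrm{i}Y)=-\mathrm{i}\,\mathcal{L}(Y)$) is cleaner: it yields the spectral symmetry $\lambda\{\mathit{\Upsilon}\}=-\lambda\{\mathit{\Upsilon}\}$ in one stroke, handles both antilinear cases simultaneously, and --- importantly for Theorem~\ref{tth10} --- extends to arbitrary $N$ without manufacturing new rank identities for $E_{\sigma}$ or $E_{\sigma}\mathcal{P}_{(m,n)}$ with $E=\sum_{i}B_{i}^{\mathrm{T}}\otimes A_{i}$. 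The paper's rank computations do buy something you forgo, namely explicit $mn\times mn$ complex criteria like~(\ref{teq19}) and~(\ref{teq6}); but for the bare statement of Item~3 your argument is both sufficient and more direct.
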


\section{\label{tsec5}Conclusion}

This paper is concerned with solvability, existence of unique solution,
closed-form solution and numerical solution of matrix equation $X=Af\left(
X\right)  B+C$ with $f\left(  X\right)  =X^{\mathrm{T}},$ $f\left(  X\right)
=\overline{X}$ and $f\left(  X\right)  =X^{\mathrm{H}},$ where $X$ is a matrix
to be determined. It is established that the solvability of these equations
are equivalent to the solvability of some auxiliary standard Stein matrix
equations in the form of $W=\mathcal{A}W\mathcal{B}+\mathcal{C}$ where the
dimensions of the coefficient matrices $\mathcal{A},\mathcal{B}$ and
$\mathcal{C}$ are the same as dimensions of the coefficient matrices of the
original equation. Based on the auxiliary standard Stein matrix equation, the
conditions for solvability and the existence of unique solution are proposed.
Closed-form solutions are also obtained by inversion of a square matrix, which
is a generalization of the standard results on the standard Stein equation.
Numerical solutions are approximated by iterations which are generalizations
of the Smith iteration and accelerated Smith iteration associated with the
standard Stein equations. We should point out that the idea in this paper can
be readily adopted to study equations in the form of $AX-f\left(  X\right)
B=C$ with $f\left(  X\right)  =X^{\mathrm{T}},f\left(  X\right)  =\overline
{X}$ and $f\left(  X\right)  =X^{\mathrm{H}}.$

\bigskip

\section*{Acknowledgements}

The authors would like to thank the Editor and the anonymous reviewers for
their helpful comments and suggestions which have helped to improve the
quality of the paper.

This work is supported in part by the National Natural Science Foundation of
China under grant number 60904007 and by HKU CRCG 201007176243.

\end{document}